\documentclass[11pt]{article}

\setlength{\textwidth}{6.5in}
\setlength{\textheight}{8.5in}
\setlength{\footskip}{0.8in}
\setlength{\unitlength}{1mm}
\setlength{\evensidemargin}{0pt}
\setlength{\oddsidemargin}{0pt}
\setlength{\topmargin}{-0.5in}

\usepackage{enumerate,xspace}
\usepackage{amsmath,amssymb,wasysym}
\usepackage[all]{xy}
\usepackage{proof}
\usepackage[svgnames]{xcolor}
\usepackage{tikz}

\usepackage{stmaryrd} %need for special interpretation bracket
\usepackage{mathtools}
\usepackage{latexsym}
\usepackage{dsfont}
\usepackage{multicol}
\usepackage{cmll}
\usepackage{multirow}
\usepackage{longtable}

\usepackage{dirtytalk}

\usepackage{lscape}
\usepackage{array}

\delimitershortfall-1sp

\usepackage{hyperref} %hyperlink package
\hypersetup{
    colorlinks,
    citecolor=red,
    filecolor=red,
    linkcolor=blue,
    urlcolor=red
}

\providecommand{\customgenericname}{}
\newcommand{\newcustomtheorem}[2]{%
  \newenvironment{#1}[1]
  {%
   \renewcommand\customgenericname{#2}%
   \renewcommand\theinnercustomgeneric{##1}%
   \innercustomgeneric
  }
  {\endinnercustomgeneric}
}

\newcustomtheorem{customthm}{Theorem}

\newtheorem{observation}{Remark}[section]
\newtheorem{lemma}[observation]{Lemma}  %%share counter with remark
\newtheorem{theorem}[observation]{Theorem}
\newtheorem{definition}[observation]{Definition}

\newtheorem{proposition}[observation]{Proposition} 
\newtheorem{corollary}[observation]{Corollary}

\makeatletter

% In-text size:

\newdimen\w@dth

\def\setw@dth#1#2{\setbox\z@\hbox{\scriptsize $#1$}\w@dth=\wd\z@
\setbox\@ne\hbox{\scriptsize $#2$}\ifnum\w@dth<\wd\@ne \w@dth=\wd\@ne \fi
\advance\w@dth by 1.2em}

\def\t@^#1_#2{\allowbreak\def\n@one{#1}\def\n@two{#2}\mathrel
{\setw@dth{#1}{#2}
\mathop{\hbox to \w@dth{\rightarrowfill}}\limits
\ifx\n@one\empty\else ^{\box\z@}\fi
\ifx\n@two\empty\else _{\box\@ne}\fi}}
\def\t@@^#1{\@ifnextchar_ {\t@^{#1}}{\t@^{#1}_{}}}

\def\t@left^#1_#2{\def\n@one{#1}\def\n@two{#2}\mathrel{\setw@dth{#1}{#2}
\mathop{\hbox to \w@dth{\leftarrowfill}}\limits
\ifx\n@one\empty\else ^{\box\z@}\fi
\ifx\n@two\empty\else _{\box\@ne}\fi}}
\def\t@@left^#1{\@ifnextchar_ {\t@left^{#1}}{\t@left^{#1}_{}}}

\def\two@^#1_#2{\def\n@one{#1}\def\n@two{#2}\mathrel{\setw@dth{#1}{#2}
\mathop{\vcenter{\hbox to \w@dth{\rightarrowfill}\kern-1.7ex
                 \hbox to \w@dth{\rightarrowfill}}%
       }\limits
\ifx\n@one\empty\else ^{\box\z@}\fi
\ifx\n@two\empty\else _{\box\@ne}\fi}}
\def\tw@@^#1{\@ifnextchar_ {\two@^{#1}}{\two@^{#1}_{}}}

\def\tofr@^#1_#2{\def\n@one{#1}\def\n@two{#2}\mathrel{\setw@dth{#1}{#2}
\mathop{\vcenter{\hbox to \w@dth{\rightarrowfill}\kern-1.7ex
                 \hbox to \w@dth{\leftarrowfill}}%
       }\limits
\ifx\n@one\empty\else ^{\box\z@}\fi
\ifx\n@two\empty\else _{\box\@ne}\fi}}
\def\t@fr@^#1{\@ifnextchar_ {\tofr@^{#1}}{\tofr@^{#1}_{}}}

% Displaysize:

\newdimen\W@dth
\def\setW@dth#1#2{\setbox\z@\hbox{$#1$}\W@dth=\wd\z@
\setbox\@ne\hbox{$#2$}\ifnum\W@dth<\wd\@ne \W@dth=\wd\@ne \fi
\advance\W@dth by 1.2em}

\def\T@^#1_#2{\allowbreak\def\N@one{#1}\def\N@two{#2}\mathrel
{\setW@dth{#1}{#2}
\mathop{\hbox to \W@dth{\rightarrowfill}}\limits
\ifx\N@one\empty\else ^{\box\z@}\fi
\ifx\N@two\empty\else _{\box\@ne}\fi}}
\def\T@@^#1{\@ifnextchar_ {\T@^{#1}}{\T@^{#1}_{}}}

\def\T@left^#1_#2{\def\N@one{#1}\def\N@two{#2}\mathrel{\setW@dth{#1}{#2}
\mathop{\hbox to \W@dth{\leftarrowfill}}\limits
\ifx\N@one\empty\else ^{\box\z@}\fi
\ifx\N@two\empty\else _{\box\@ne}\fi}}
\def\T@@left^#1{\@ifnextchar_ {\T@left^{#1}}{\T@left^{#1}_{}}}

\def\Tofr@^#1_#2{\def\N@one{#1}\def\N@two{#2}\mathrel{\setW@dth{#1}{#2}
\mathop{\vcenter{\hbox to \W@dth{\rightarrowfill}\kern-1.7ex
                 \hbox to \W@dth{\leftarrowfill}}%
       }\limits
\ifx\N@one\empty\else ^{\box\z@}\fi
\ifx\N@two\empty\else _{\box\@ne}\fi}}
\def\T@fr@^#1{\@ifnextchar_ {\Tofr@^{#1}}{\Tofr@^{#1}_{}}}

\def\Two@^#1_#2{\def\N@one{#1}\def\N@two{#2}\mathrel{\setW@dth{#1}{#2}
\mathop{\vcenter{\hbox to \W@dth{\rightarrowfill}\kern-1.7ex
                 \hbox to \W@dth{\rightarrowfill}}%
       }\limits
\ifx\N@one\empty\else ^{\box\z@}\fi
\ifx\N@two\empty\else _{\box\@ne}\fi}}
\def\Tw@@^#1{\@ifnextchar_ {\Two@^{#1}}{\Two@^{#1}_{}}}

\def\to{\@ifnextchar^ {\t@@}{\t@@^{}}}
\def\from{\@ifnextchar^ {\t@@left}{\t@@left^{}}}
\def\tofro{\@ifnextchar^ {\t@fr@}{\t@fr@^{}}}
\def\To{\@ifnextchar^ {\T@@}{\T@@^{}}}
\def\From{\@ifnextchar^ {\T@@left}{\T@@left^{}}}
\def\Two{\@ifnextchar^ {\Tw@@}{\Tw@@^{}}}
\def\Tofro{\@ifnextchar^ {\T@fr@}{\T@fr@^{}}}

\makeatother

\title{Convenient Antiderivatives For Differential Linear Categories}
\author{Jean-Simon Pacaud Lemay}
%\date{}							% Activate to display a given date or no date

\begin{document}
\allowdisplaybreaks

\maketitle
%\section{}
%\subsection{}

%\tableofcontents
%\newpage

%%%%%%%%%%%%%%%%%%%%%%%%%%%%%%%%%%%%%%%%%%%%%%%%%%%%%%%%%%%%%%%%%%%%%

\begin{abstract} Differential categories axiomatize the basics of differentiation and provide categorical models of differential linear logic. A differential category is said to have antiderivatives if a natural transformation $\mathsf{K}$, which all differential categories have, is a natural isomorphism. Differential categories with antiderivatives come equipped with a canonical integration operator such that generalizations of the Fundamental Theorems of Calculus hold. In this paper, we show that Blute, Ehrhard, and Tasson's differential category of convenient vector spaces has antiderivatives. To help prove this result, we show that a differential linear category -- which is a differential category with a monoidal coalgebra modality -- has antiderivatives if and only if one can integrate over the monoidal unit and such that the Fundamental Theorems of Calculus hold. We also show that generalizations of the relational model (which are biproduct completions of complete semirings) are also differential linear categories with antiderivatives.
\end{abstract}

\subparagraph*{Acknowledgements.} The author would like to thank Robin Cockett, Geoff Cruttwell, Thomas Ehrhard, Christine Tasson, and the anonymous reviewers for useful discussions and editorial comments. The author also thanks Kellogg College, the Clarendon Fund, and the Oxford-Google DeepMind Graduate Scholarship for financial support. 

\section{Introduction}

In single-variable calculus, the relationship between differentiation and integration is captured by the two Fundamental Theorems of Calculus, which in particular relates antiderivatives to definite integrals. The First Fundamental Theorem of Calculus states that the bounded integral of a smooth function is one of its antiderivatives, that is, the derivative of the integral of a function is equal to the original function:
\begin{align*}
\frac{{\sf d} (\int_a^t f(u)~{\sf d}u)}{{\sf d}t}(x)=f(x)
\end{align*}
While the Second Fundamental Theorem of Calculus directly relates the derivative and the Riemann integral in the following way: for any differential function $f: \mathbb{R} \to \mathbb{R}$, the Riemann integral of its derivative over an interval $[a,b]$ is given by the difference at the endpoints of $f$: 
\begin{align*}
\int_a^b\frac{{\sf d} f(t)}{{\sf d}t}(s)~{\sf d}s=f(b)-f(a)
\end{align*}
The generalization of the Second Fundamental Theorem of Calculus to the multivariable setting is given by the Fundamental Theorem of Line Integrals (also sometimes known as the Gradient Theorem) which, as the names suggest, relates line integration to the gradient. 
Given a vector field $F: \mathbb{R}^n \to \mathbb{R}^n$, recall that its line integral over a curve $\mathcal{C}$ parametrized by $\mathsf{r}: [0,1] \to \mathbb{R}^n$ is defined as follows: 
\[\int \limits_C F\cdot \mathsf{d}\mathsf{r} := \int \limits^b_a F(\mathsf{r}(t)) \cdot \mathsf{r}^\prime (t) ~ \mathsf{d}t \]
Note that while line integration (and its iterated versions) is a concept for multivariable calculus, the line integral itself is computed as an integral over the real as in the single-variable setting. In a certain sense, even the notion of integrating differential forms over manifolds (a fundamental concept of integration in differential geometry) comes down simply to integrating over the real line (especially for Riemannian manifolds). This still true in categorical models of differential linear logic, in that the concepts of integration and antiderivatives in this setting are solely dependent on the ability to integrate over the monoidal unit. 

Differential categories were introduced by Blute, Cockett, and Seely in \cite{blute2006differential} to provide categorical models of differential linear logic \cite{ehrhard2017introduction}. As such, differential categories provide an algebraic axiomatization of the basic foundations of differentiation. The coKleisli category of a differential category is a Cartesian differential category \cite{blute2009cartesian}, which axiomatizes the directional derivative and differential calculus on Euclidean spaces, and also provides categorical models of the differential $\lambda$-calculus, as introduced by Ehrhard and Regnier in \cite{ehrhard2003differential}. Differential categories now have a rich literature with many interesting examples such as commutative algebras, $\mathcal{C}^\infty$-rings, finiteness spaces, Rota-Baxter algebras, K\"othe spaces, etc. One particular example with close ties to differential geometry is the differential category of convenient vector spaces, introduced by Blute, Ehrhard, and Tasson in \cite{blute2010convenient}. 

Convenient vector spaces, introduced by Fr\"{o}licher and Kriegl in \cite{frolicher1988linear}, have been used to study differential geometry on infinite-dimensional manifolds since convenient vector spaces have many desirable and well-behaved properties \cite{kriegl1997convenient}. In particular, the category of convenient vector spaces and smooth maps between them, $\mathsf{CON}_{sm}$, is a Cartesian closed category \cite[Theorem 3.12]{kriegl1997convenient}, unlike other categories related to differential geometry such as the category of smooth manifolds. Furthermore, $\mathsf{CON}_{sm}$ is isomorphic to the coKleisli of a comonad on the category of convenient vector spaces and bounded linear maps, $\mathsf{CON}$ \cite[Theorem 6.3]{blute2010convenient}. In fact, this comonad is a coalgebra modality which has the Seely isomorphisms \cite[Lemma 6.4]{blute2010convenient} and a deriving transformation \cite[Theorem 6.6]{blute2010convenient}. Therefore $\mathsf{CON}$ is a differential category, and as a consequence $\mathsf{CON}_{sm}$ is a Cartesian differential category. In their conclusion, Blute, Ehrhard, and Tasson state: \say{$\hdots$ a next fundamental question is the logical/syntactic structure of integration. One would like an integral linear logic, which would again treat integration as an inference rule. It should not be a surprise at this point that convenient vector spaces are extremely well-behaved with regards to integration. The category [of convenient vector spaces] will likely provide an excellent indicator of the appropriate structure.} While such a categorical framework for integration has been developed, one has not yet gone back to check that the differential category of convenient vector spaces provides a model of this integration. 

The notion of integration in a differential category was first introduced by Ehrhard \cite{ehrhard2017introduction}, while an axiomatization of integration separate from differentiation was later developed by Cockett and Lemay with the introduction of integral categories \cite{cockett_lemay_2018}. Somewhat analogue to differential categories, the axioms of an integral category are the basic rules of integration which include that the integral of a constant function is a linear function and the Rota-Baxter rule \cite{guo2012introduction}, which is an expression of integration by parts using only integrals. The coKleisli categories of appropriate integral categories are known as Cartesian integral categories \cite{COCKETT201845}, which takes a more analytic approach. Axiomatizing integration in this manner has also lead to studying the Fundamental Theorems of Calculus in the differential category setting. A calculus category \cite[Definition 5.6]{cockett_lemay_2018} is a differential category which is also an integral category such that the differential structure and integral structure are compatible in the sense of satisfying analogue versions of both Fundamental Theorems of Calculus. In particular, as previously mentioned, the Fundamental Theorems of Calculus link integrals to antiderivatives and vice-versa. This leads to the concept of a differential category having antiderivatives \cite{cockett_lemay_2018,ehrhard2017introduction}, which is a way of obtaining an integral structure from the differential structure. Explicitly, a differential category is said to have {antiderivatives \footnote{In the Cockett and Lemay sense, which implies Ehrhard's notion of having antiderivatives}} if a natural transformation $\mathsf{K}$, which all differential categories have, is a natural isomorphism. Furthermore, every differential category with antiderivatives is a calculus category with respect to the integral constructed using the inverse of $\mathsf{K}$ (Definition \ref{ASdef}). The main objective of this paper is to show that the differential category of convenient vector spaces admits antiderivatives and therefore admits an integral structure such that the Fundamental Theorems of Calculus hold. 

To help us show that convenient vector spaces provide a differential category with antiderivatives, we will need to take a closer look at when differential linear categories have antiderivatives. Indeed, if one were to charge headfirst into proving that $\mathsf{K}$ was an isomorphism, one would have to deal with infinite-dimensional convenient vector spaces and many technical analytic nuances. However for differential categories with a monoidal coalgebra modality, which we call here a differential linear category, one can give a simple sufficient condition for having antiderivatives. As discussed at the beginning of the introduction, it turns out that it is sufficient to be able to integrate over the monoidal unit and also check that the Second Fundamental Theorem of Calculus holds (Theorem \ref{antithm}). This greatly simplifies showing that a differential linear category has antiderivatives, as one only needs to work with the monoidal unit. The idea here is the same as for line integration in the sense that while one can define integration for any object, one is only really integrating over the monoidal unit. In the case of convenient vector spaces, the monoidal unit is $\mathbb{R}$, which is especially well behaved and easy to work with. This general result for differential linear categories implies that to be able to integrate and obtain antiderivatives for arbitrary smooth maps between convenient vector spaces, one only needs to understand how to integrate smooth curves $\mathsf{c}: \mathbb{R} \to \mathbb{R}$, which is a key concept in the theory of convenient vector spaces. In fact, for a convenient vector space, every smooth curve admits an antiderivative and also the Second Fundamental Theorem of Calculus holds. This observation and Theorem \ref{antithm} is essentially the proof that the differential category of convenient vector spaces admits antiderivatives (Section \ref{consec}). Furthermore, as another application of Theorem \ref{antithm}, we are also able to show that weighted generalizations of the relational model (which are biproduct completions of complete semirings) are also differential categories with antiderivatives (Section \ref{bisec}).

\subparagraph*{Main Results:} 

The main technical result of this paper is the following: 

\begin{customthm}{\ref{antithm}} A differential linear category has antiderivatives if and only if for the monoidal unit $R$ there is a map $\mathsf{s}_R: \oc R \to \oc R$ such that $\mathsf{s}_R$ and the deriving transformation $\mathsf{d}_R: \oc R \to \oc R$ satisfy the Second Fundamental Theorem of Calculus, that is, $\mathsf{s}_R \mathsf{d}_R + \oc(0) = 1_{\oc R}$. 
\end{customthm} 

The above theorem simplifies proving the main goal of this paper which is the following: 

\begin{customthm}{\ref{mainthm}} $\mathsf{CON}$, the category of convenient vector spaces and bounded linear maps between them, is a differential linear category with antiderivatives. 
\end{customthm} 

\noindent \subparagraph*{Outline:} We begin with Section \ref{diffsec} which provides a recap of differential (linear) categories with antiderivatives. Afterwards, in Section \ref{diffantisec} we briefly review differential categories with antiderivatives. In Section \ref{difflinsec} we study differential linear categories with antiderivatives and in particular prove Theorem \ref{antithm}, the main technical result of this paper. Sections \ref{polysec}, \ref{bisec}, and \ref{consec} are dedicated to providing examples of differential linear categories with antiderivatives by applying Theorem \ref{antithm}. In particular, Section \ref{consec} is dedicated to the main goal of this paper of showing that the differential category of convenient vector spaces has antiderivatives. 

\subparagraph*{Conventions:} In these notes, we will use diagrammatic order for composition. Explicitly, this means that the composite map $fg$ is the map which first does $f$ then $g$. Also, to simplify working in a symmetric monoidal category, we will instead work in a symmetric \emph{strict} monoidal category \cite{mac2013categories}, that is, the unit and associativity isomorphisms are identities. We denote symmetric monoidal categories as quadruples $(\mathbb{X}, \otimes, R, \sigma)$ where $\mathbb{X}$ is the base category, $\otimes$ is the tensor product, $R$ for the monoidal unit (so in particular $A \otimes R = A = R \otimes A$), and ${\sigma_{A,B}: A \otimes B \to B \otimes A}$ is the symmetry natural isomorphism. 

\section{Differential Categories and Differential Linear Categories}\label{diffsec}

In this section, we give a brief overview of differential categories and differential linear categories. We begin by recalling the notion of coalgebra modalities (Definition \ref{coalgdef}) and the coderiving transformation (Definition \ref{dcircdef}). Then we review differential categories (Definition \ref{diffdef}) and in particular we also discuss the natural transformations $\mathsf{K}$ and $\mathsf{J}$ (Definition \ref{KJdef}), both of which play fundamental roles for the notion of antiderivatives (which we discuss in the next section). Afterwards, we consider monoidal coalgebra modalities (Definition \ref{moncoalgmod}), the Seely isomorphisms (Definition \ref{Seelydef}), and differential linear categories (Definition \ref{difflindef}). For a more complete story on differential (linear) categories, including the relevant commutative diagrams and string diagram representations, we refer the reader to \cite{Blute2019,blute2006differential}. 

Coalgebra modalities \cite{blute2006differential} are comonads $\oc$ such that for each object $A$, $\oc A$ comes equipped with a natural cocommutative comonoid structure. Coalgebra modalities are strictly weaker structure then what is required for a categorical model of the multiplicative and exponential fragment of linear logic ($\mathsf{MELL}$) \cite{bierman1995categorical,mellies2009categorical}, for that one requires a \emph{monoidal} coalgebra modality. However, coalgebra modalities are sufficient to axiomatize differentiation. 

\begin{definition}\label{coalgdef} A \textbf{coalgebra modality} \cite[Definition 2.1]{blute2006differential} on a symmetric monoidal category $(\mathbb{X}, \otimes, R, \sigma)$ is a quintuple $(\oc, \delta, \varepsilon, \Delta, e)$ consisting of a functor ${\oc: \mathbb{X} \to \mathbb{X}}$ and four natural transformations ${\rho_A: \oc A \to \oc \oc A}$, $ \varepsilon_A: \oc A \to A$, $\Delta_A: \oc A \to \oc A \otimes \oc A $ and $e_A: \oc A \to R$ such that: 
\begin{enumerate}[{\em (i)}]
\item $(\oc, \delta, \varepsilon)$ is a comonad on $\mathbb{X}$, that is, the following equalities hold:
\begin{align*} \rho_A \varepsilon_{\oc A} = 1_{\oc A} = \rho_A \oc(\varepsilon_A) && \rho_A \rho_{\oc A} = \rho_A \oc(\rho_A) 
\end{align*}
\item $(\oc A, \Delta_A, e_A)$ is a cocommutative comonoid, that is, the following equalities hold: 
\begin{align*} \Delta_A (\Delta_A \otimes 1_{\oc A} = \Delta_A(1_{\oc A} \otimes \Delta_A) && \Delta_A(e_A \otimes 1_{\oc A}) = 1_{\oc A} = \Delta(1_{\oc A} \otimes e_A)
\end{align*}
\[\Delta_A \sigma_{\oc A, \oc A} = \Delta_A\] 
\item $\delta_A$ is a comonoid morphism, that is, the following diagram commutes: 
\begin{align*} \Delta_A (\rho_A \otimes \rho_A) = \rho_A \Delta_{\oc A} && \rho_A e_{\oc A} = e_A 
\end{align*}
\end{enumerate}
 \end{definition}

CoKleisli maps of coalgebra modalities, that is, maps of type $f: \oc A \to B$, are of particular interest as they should be thought of as \emph{smooth} maps. This terminology is of no coincidence. Indeed, in a differential category, the differentiable maps are precisely the coKleisli maps, and they are (in a certain way) infinitely differentiable and hence smooth. A subclass of these smooth maps are the \emph{linear} maps which are coKleisli maps of the form $\varepsilon_A g: \oc A \to B$ for some map $g: A \to B$.  

Every coalgebra modality comes equipped with an important natural transformation known as the coderiving transformation -- which plays a central role in the integration side of the story.

\begin{definition} \label{dcircdef} For a coalgebra modality $(\oc, \rho, \varepsilon, \Delta, \mathsf{e})$, the \textbf{coderiving transformation} \cite[Definition 2.2]{cockett_lemay_2018} is the natural transformation $\mathsf{d}^\circ_A: \oc A \to \oc A \otimes A$ defined as the following composite: 
\begin{equation}\label{coderive}\begin{gathered} \xymatrixcolsep{5pc}\xymatrix{\mathsf{d}^\circ_A := \oc A \ar[r]^-{\Delta_A} & \oc A \otimes \oc A \ar[r]^-{1_{\oc A} \otimes \varepsilon_A} & \oc A \otimes A
 } \end{gathered}\end{equation}
\end{definition}
For a list of identities the coderiving transformation satisfies see \cite[Proposition 2.1]{cockett_lemay_2018}. 

Differential categories were introduced by Blute, Cockett, and Seely in \cite{blute2006differential} to provide an algebraic axiomatization of the basic properties of the differentiation. Two of the basic properties of the derivative from classical differential calculus requires addition: that the derivative of a constant function is zero and the Leibniz rule for deriving a product of functions. Therefore, we must first discuss the basic additive structure of a differential category which is captured by the notion of additive symmetric monoidal categories. Here we mean ``additive'' in the Blute, Cockett, and Seely sense of the term \cite{blute2006differential}, that is, to mean enriched over commutative monoids. In particular, we do not assume negatives nor do we assume biproducts (which differs from other definitions of an additive category found in the literature). 

\begin{definition}\label{addcatdef} An \textbf{additive category} \cite[Definition 3]{Blute2019} is a commutative monoid enriched category, that is, a category $\mathbb{X}$ in which each hom-set $\mathbb{X}(A,B)$ is a commutative monoid with an addition operation $+: \mathbb{X}(A,B) \times \mathbb{X}(A,B) \to \mathbb{X}(A,B)$ and a zero $0 \in \mathbb{X}(A,B)$, and such that composition preserves the additive structure, that is:
\begin{align*}
k(f\!+\!g)h\!=kfh\!+\!kgh && k0h=0
\end{align*}
An \textbf{additive symmetric monoidal category} \cite[Definition 3]{Blute2019} is a symmetric monoidal category $(\mathbb{X}, \otimes, R, \sigma)$ such that $\mathbb{X}$ is also an additive category in which the tensor product $\otimes$ is compatible with the additive structure in the sense that: 
\begin{align*}
k \otimes (f\!+\!g)\otimes h\!= \!k\otimes\!f\otimes h \!+ \!k\otimes\!g\otimes h && k \otimes 0 \otimes h\!=\!0 
\end{align*}
\end{definition}

It is worth mentioning that every additive category can be completed to a category with finite biproducts (which is itself an additive category), and similarly, every additive symmetric monoidal category can be completed to an additive symmetric monoidal category with finite biproducts. For this reason, it can be argued that that one should always assume a setting with finite biproducts, such as in \cite{fiore2007differential}. The problem is that arbitrary coalgebra modalities do not necessarily extend to the finite biproduct completion. On the other hand, monoidal coalgebra modalities induce monoidal coalgebra modalities on the finite biproduct completion \cite[Section 7]{Blute2019}. However, finite biproducts do not play an important technical role in this paper, so we will continue without them. 

\begin{definition}\label{diffdef} A \textbf{differential category} \cite[Definition 2.4]{blute2006differential} is an additive symmetric monoidal category $(\mathbb{X}, \otimes, R, \sigma)$ with a coalgebra modality $(\oc, \rho, \varepsilon, \Delta, \mathsf{e})$ which comes equipped with a \textbf{deriving transformation} \cite[Definition 7]{Blute2019}, that is, a natural transformation $\mathsf{d}_A: \oc A \otimes A \to \oc A$ such that the following equalities hold: 
\begin{enumerate}[{\bf [d.1]}]
\item \textbf{Constant Rule}: $\mathsf{d}_A \mathsf{e}_A = 0$
\item \textbf{Leibniz Rule}: $\mathsf{d}_A \Delta_A=(\Delta_A \otimes 1_{A})(1_{\oc A} \otimes \sigma_{\oc A,A})(\mathsf{d}_A \otimes 1_{\oc A})+ (\Delta_A \otimes 1_A)(1_{\oc A} \otimes \mathsf{d}_A) $
\item \textbf{Linear Rule}: $\mathsf{d}_A \varepsilon_A =\mathsf{e}_A \otimes 1_{\oc A}$
\item \textbf{Chain Rule}: $\mathsf{d}_A\rho_A=(\Delta_A \otimes 1_A)(\rho_A \otimes \mathsf{d}_A)\mathsf{d}_{\oc A}$
\item \textbf{Interchange Rule}: $(\mathsf{d}_A \otimes 1_A)\mathsf{d}_A=(1_{\oc A} \otimes \sigma_{A,A})(\mathsf{d}_A \otimes 1_A)\mathsf{d}_A$
\end{enumerate}
\end{definition}

The derivative of a coKleisli map $f: \oc A \to B$ (which recall are interpreted as smooth maps) is the map $\mathsf{D}[f]: \oc A \otimes A \to B$, defined as the composite $\mathsf{D}[f]:= \mathsf{d}_A f$. The first deriving transformation axiom, the constant rule {\bf [d.1]}, states that the derivative of a constant map is zero. The second axiom {\bf [d.2]} is the Leibniz rule for differentiation. The third axiom, the linear rule {\bf [d.3]}, says that the derivative of a linear map (which recall are maps of the form $\varepsilon_A g$) is a constant. The fourth axiom {\bf [d.4]} is the chain rule, describing how to differentiate composition in the coKleisli category. And the last axiom, the interchange rule {\bf [d.5]}, is the independence of differentiation, which naively states that differentiating with respect to $x$ then $y$ is the same as differentiation with respect to $y$ then $x$. It should be noted that the interchange rule {\bf [d.5]} was not part of the definition in \cite[Definition 2.5]{blute2006differential} but was later added to ensure that the coKleisli category of a differential category was a Cartesian differential category \cite[Proposition 3.2.1]{blute2009cartesian}. Many examples of differential categories can be found throughout the literature, such as in \cite[Section 9]{Blute2019}. 

By the Leibniz Rule {\bf [d.2]} and the Linear Rule {\bf [d.3]}, the deriving transformation and coderiving transformation (Definition \ref{dcircdef}) are compatible in the following sense: 

\begin{proposition}\label{Wp} \cite[Proposition 4.1]{cockett_lemay_2018} In a differential category, the deriving transformation $\mathsf{d}$ and coderiving transformation $\mathsf{d}^\circ$ satisfy the following equality:
\begin{equation}\label{W}\begin{gathered} \mathsf{d}_A\mathsf{d}^\circ_A = (\mathsf{d}^\circ_A \otimes 1_A)(1_A \otimes \sigma_{A,A})(\mathsf{d}_A \otimes 1_A) + (1_{\oc A} \otimes 1_A) \end{gathered}\end{equation}
\end{proposition}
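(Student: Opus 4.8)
The plan is to compute the composite $\mathsf{d}_A \mathsf{d}^\circ_A$ directly, unfolding the coderiving transformation and then using the Leibniz rule {\bf [d.2]} as the main engine. Recalling from Definition \ref{dcircdef} that $\mathsf{d}^\circ_A = \Delta_A(1_{\oc A} \otimes \varepsilon_A)$ and working in diagrammatic order, the first step is simply
\[ \mathsf{d}_A \mathsf{d}^\circ_A = \mathsf{d}_A \Delta_A (1_{\oc A} \otimes \varepsilon_A). \]
So everything reduces to understanding $\mathsf{d}_A \Delta_A$, which is precisely the left-hand side of the Leibniz rule {\bf [d.2]}. Substituting the right-hand side of {\bf [d.2]} and then post-composing each of the two resulting summands with $1_{\oc A} \otimes \varepsilon_A$ splits the computation into two independent pieces, which I would handle separately.

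For the first summand, post-composition produces $(\Delta_A \otimes 1_A)(1_{\oc A} \otimes \sigma_{\oc A,A})(\mathsf{d}_A \otimes 1_{\oc A})(1_{\oc A} \otimes \varepsilon_A)$. Here the counit $\varepsilon_A$ is applied only to the trailing $\oc A$ tensor factor, which is merely carried along through the symmetry. The key step is to slide $\varepsilon_A$ back past the symmetry using naturality of $\sigma$ (so that $\sigma_{\oc A,A}$ followed by $1 \otimes \varepsilon_A$ becomes $\varepsilon_A \otimes 1$ followed by $\sigma_{A,A}$) together with the bifunctoriality of $\otimes$. After this rewriting, the factor $\Delta_A(1_{\oc A} \otimes \varepsilon_A)$ reassembles exactly into $\mathsf{d}^\circ_A$, and the first summand becomes $(\mathsf{d}^\circ_A \otimes 1_A)(1_{\oc A} \otimes \sigma_{A,A})(\mathsf{d}_A \otimes 1_A)$, the leading term of the claimed identity \eqref{W}.

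For the second summand, post-composition yields $(\Delta_A \otimes 1_A)\big(1_{\oc A} \otimes (\mathsf{d}_A \varepsilon_A)\big)$. Applying the Linear Rule {\bf [d.3]}, namely $\mathsf{d}_A\varepsilon_A = \mathsf{e}_A \otimes 1_A$, turns the inner factor into $\mathsf{e}_A \otimes 1_A$. The comonoid counit law from Definition \ref{coalgdef}(ii), $\Delta_A(1_{\oc A} \otimes \mathsf{e}_A) = 1_{\oc A}$, then collapses this summand to the identity $1_{\oc A} \otimes 1_A$, the second term of \eqref{W}. Adding the two pieces, which is legitimate since composition is additive in an additive symmetric monoidal category, gives the result.

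I expect the only delicate point to be the bookkeeping in the first summand: keeping track of which tensor factor each of $\mathsf{d}_A$, $\varepsilon_A$, and $\sigma$ acts on, and correctly invoking naturality of the symmetry to move $\varepsilon_A$ across $\sigma$. This is purely a matter of careful coherence and naturality manipulation rather than a genuine conceptual obstacle; a string-diagram calculus would make it transparent. No axiom beyond {\bf [d.2]}, {\bf [d.3]}, and the comonoid counit law is needed.
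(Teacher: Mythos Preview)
Your proof is correct and follows exactly the route the paper indicates: the paper does not spell out a proof but simply remarks that the identity follows from the Leibniz Rule {\bf [d.2]} and the Linear Rule {\bf [d.3]}, which is precisely what you do. Your handling of the first summand (sliding $\varepsilon_A$ past the symmetry via naturality to reconstitute $\mathsf{d}^\circ_A$) and the second summand (applying {\bf [d.3]} and the comonoid counit law) is accurate.
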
 

In every differential category, there are two important natural transformations which are constructed using both the deriving transformation and coderiving transformation:

\begin{definition}\label{KJdef} In a differential category, define the natural transformations $\mathsf{K}_A: \oc A \to \oc A$ \cite[Definition 4.2]{cockett_lemay_2018} and $\mathsf{J}_A: \oc A \to \oc A$ \cite [Section 3.2]{ehrhard2017introduction} respectively as follows: 
\begin{equation}\label{Kdef}\begin{gathered} \mathsf{K}_A := \left( \xymatrixcolsep{5pc}\xymatrix{ \oc A \ar[r]^-{\mathsf{d}^\circ_A} & \oc A \otimes A \ar[r]^-{\mathsf{d}_A} & \oc A} \right) + \left( \xymatrixcolsep{5pc}\xymatrix{ \oc A \ar[r]^-{\oc(0)} & \oc A} \right) \end{gathered}\end{equation}
\begin{equation}\label{Kdef}\begin{gathered} \mathsf{J}_A := \left( \xymatrixcolsep{5pc}\xymatrix{ \oc A \ar[r]^-{\mathsf{d}^\circ_A} & \oc A \otimes A \ar[r]^-{\mathsf{d}_A} & \oc A} \right) + \left( \xymatrixcolsep{5pc}\xymatrix{ \oc A \ar@{=}[r]& \oc A} \right) \end{gathered}\end{equation}
\end{definition}

\noindent For a list of identities which $\mathsf{K}$ and $\mathsf{J}$ satisfy see \cite[Corollary 4.1, Proposition 4.4]{cockett_lemay_2018}. In particular, $\mathsf{K}$ and $\mathsf{J}$ are related by the following identities: 

\begin{proposition}\label{KJp} \cite[Proposition 4.4]{cockett_lemay_2018} In a differential category, the following equalities hold:
\begin{align*}
\mathsf{K}_A \oc(0) = \oc(0) = \oc(0) \mathsf{K}_A && \mathsf{J}_A \oc(0) = \oc(0) = \oc(0) \mathsf{J}_A\\ 
\mathsf{K}_A\mathsf{d}^\circ_A = \mathsf{d}^\circ_A (\mathsf{J}_A \otimes 1_A) && \mathsf{d}_A\mathsf{K}_A = (\mathsf{J}_A \otimes 1_A)\mathsf{d}_A
\end{align*} 
\end{proposition}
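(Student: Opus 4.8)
The plan is to reduce all four identities to three ingredients: the two naturality consequences $\mathsf{d}_A \oc(0) = 0$ and $\oc(0)\mathsf{d}^\circ_A = 0$, the compatibility identity $W$ of Proposition \ref{Wp}, and the interchange rule \textbf{[d.5]} together with its dual cointerchange rule for $\mathsf{d}^\circ$ from \cite[Proposition 2.1]{cockett_lemay_2018}. First I would record the two vanishing facts. Instantiating naturality of $\mathsf{d}$ at the zero map $0 \colon A \to A$ gives $\mathsf{d}_A\oc(0) = (\oc(0)\otimes 0)\mathsf{d}_A$, and since any tensor with a zero factor vanishes in an additive symmetric monoidal category we have $\oc(0)\otimes 0 = 0$, hence $\mathsf{d}_A\oc(0) = 0$; dually, naturality of $\mathsf{d}^\circ$ at $0$ yields $\oc(0)\mathsf{d}^\circ_A = \mathsf{d}^\circ_A(\oc(0)\otimes 0) = 0$. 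I would also note that $\oc(0)$ is idempotent, since $\oc$ is a functor and $0$ is idempotent under composition, so $\oc(0)\oc(0) = \oc(0)$.

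The first two rows then follow by expanding the definitions. Post-composing $\mathsf{K}_A = \mathsf{d}^\circ_A\mathsf{d}_A + \oc(0)$ with $\oc(0)$, the term $\mathsf{d}^\circ_A\mathsf{d}_A\oc(0)$ dies because $\mathsf{d}_A\oc(0) = 0$, leaving $\oc(0)\oc(0) = \oc(0)$; pre-composing with $\oc(0)$ instead kills the term via $\oc(0)\mathsf{d}^\circ_A = 0$. The same computation with $\mathsf{J}_A = \mathsf{d}^\circ_A\mathsf{d}_A + 1_{\oc A}$ gives $\mathsf{J}_A\oc(0) = \oc(0) = \oc(0)\mathsf{J}_A$, the surviving term now being $1_{\oc A}\oc(0) = \oc(0)$.

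For the last two rows I would expand both sides and compare. For $\mathsf{K}_A\mathsf{d}^\circ_A = \mathsf{d}^\circ_A(\mathsf{J}_A\otimes 1_A)$, the left side reduces to $\mathsf{d}^\circ_A\mathsf{d}_A\mathsf{d}^\circ_A$ since the $\oc(0)\mathsf{d}^\circ_A$ contribution vanishes, and substituting $W$ for $\mathsf{d}_A\mathsf{d}^\circ_A$ produces $\mathsf{d}^\circ_A + \mathsf{d}^\circ_A(\mathsf{d}^\circ_A\otimes 1_A)(1_{\oc A}\otimes\sigma_{A,A})(\mathsf{d}_A\otimes 1_A)$; expanding the right side gives $\mathsf{d}^\circ_A + \mathsf{d}^\circ_A(\mathsf{d}^\circ_A\otimes 1_A)(\mathsf{d}_A\otimes 1_A)$, so the two sides agree exactly when $\mathsf{d}^\circ_A(\mathsf{d}^\circ_A\otimes 1_A)(1_{\oc A}\otimes\sigma_{A,A}) = \mathsf{d}^\circ_A(\mathsf{d}^\circ_A\otimes 1_A)$, which is the cointerchange rule. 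The identity $\mathsf{d}_A\mathsf{K}_A = (\mathsf{J}_A\otimes 1_A)\mathsf{d}_A$ is handled symmetrically: the left side reduces to $\mathsf{d}_A\mathsf{d}^\circ_A\mathsf{d}_A$, and after substituting $W$ the residual cross-term matches $(\mathsf{J}_A\otimes 1_A)\mathsf{d}_A$ by the interchange rule \textbf{[d.5]} in the form $(\mathsf{d}_A\otimes 1_A)\mathsf{d}_A = (1_{\oc A}\otimes\sigma_{A,A})(\mathsf{d}_A\otimes 1_A)\mathsf{d}_A$.

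The main obstacle I anticipate is the tensor-factor bookkeeping in these last two identities: one must track precisely which copy of $A$ the symmetry $\sigma_{A,A}$ interchanges and confirm that the surviving cross-terms align with the (co)interchange rule rather than a permuted variant. Carrying out the verification in the string-diagram calculus of \cite{Blute2019} would make this matching transparent, and that is how I would check the details.
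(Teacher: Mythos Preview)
Your argument is correct. The paper does not give its own proof of this proposition; it simply records the statement and cites \cite[Proposition 4.4]{cockett_lemay_2018} for the verification. Your approach --- using naturality of $\mathsf{d}$ and $\mathsf{d}^\circ$ at the zero map to kill the cross-terms in the first two rows, and combining Proposition~\ref{Wp} with the interchange rule {\bf [d.5]} and its coderiving analogue from \cite[Proposition~2.1]{cockett_lemay_2018} for the last two --- is exactly the standard route and is what the cited reference does.
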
 

We now turn our attention to differential categories with \emph{monoidal} coalgebra modalities. Let us first recall the notion of a monoidal coalgebra modality -- also sometimes known as a \textbf{linear exponential modality} \cite{schalk2004categorical}. Monoidal coalgebra modalities are coalgebra modalities whose underlying comonad is also a symmetric monoidal comonad. Symmetric monoidal closed categories with a monoidal coalgebra are categorical models of $\mathsf{MELL}$ -- also known as linear categories \cite{bierman1995categorical,mellies2009categorical}.

 \begin{definition}\label{moncoalgmod} A \textbf{monoidal coalgebra modality} \cite[Definition 2]{Blute2019} on a symmetric monoidal category $(\mathbb{X}, \otimes, R, \sigma)$ is a septuple $(\oc, \rho, \varepsilon, \Delta, \mathsf{e}, \mathsf{m}, \mathsf{m}_R)$ consisting of a coalgebra modality $(\oc, \rho, \varepsilon, \Delta, e)$ and a natural transformation ${\mathsf{m}_{A,B}: \oc A \otimes \oc B\to \oc(A \otimes B)}$, and a map ${\mathsf{m}_R: R\to \oc R}$ such that:
 \begin{enumerate}[{\em (i)}]
\item $(\oc, \mathsf{m}, \mathsf{m}_R)$ is a symmetric monoidal functor, that is, the following equalities hold: 
\[(\mathsf{m}_{A,B} \otimes 1_{\oc C})\mathsf{m}_{A\otimes B, C} = (1_{\oc A} \otimes \mathsf{m}_{B,C})\mathsf{m}_{A, B \otimes C} \] 
\begin{align*} 
 (\mathsf{m}_R \otimes 1_{\oc A})\mathsf{m}_{R,A} = 1_{\oc A} = (1_{\oc A} \otimes \mathsf{m}_R)\mathsf{m} _{A,R} && \sigma_{\oc A,\oc B} \mathsf{m}_{B,A} = \mathsf{m}_{A,B} \oc(\sigma_{A,B}) 
\end{align*}
\item $\rho$ and $\varepsilon$ are monoidal transformations, that is, the equalities hold: 
\begin{align*} 
 \mathsf{m}_{A,B}\rho_{A \otimes B} = (\rho_A \otimes \rho_B)\mathsf{m}_{\oc A,\oc B}\oc(\mathsf{m}_{A,B}) && \mathsf{m}_{A,B}\varepsilon_{A \otimes B} = \varepsilon_A \otimes \varepsilon_B 
 \end{align*}
 \begin{align*} 
 \mathsf{m}_R\rho_R = \mathsf{m}_R \oc(\mathsf{m}_R) && \mathsf{m}_R \varepsilon_R = 1_R 
\end{align*}
\item $\Delta$ and $e$ are monoidal transformations, that is, the following equalities hold: 
\begin{align*} 
\mathsf{m}_{A,B} \Delta_{A \otimes B} = (\Delta_A \otimes \Delta_B)(1_{\oc A} \otimes \sigma_{\oc A, \oc B} \otimes 1_{\oc B})(\mathsf{m}_{A,B} \otimes \mathsf{m}_{A,B}) && \mathsf{m}_{A,B}\mathsf{e}_{A \otimes B}= \mathsf{e}_A \otimes \mathsf{e}_B 
\end{align*}
\begin{align*} 
 \mathsf{m}_R \Delta_R = \mathsf{m}_R \otimes \mathsf{m}_R && \mathsf{m}_R \mathsf{e}_R = 1_R 
\end{align*}
\item $\Delta$ and $e$ are $\oc$-coalgebra morphisms, that is, the following equalities hold:
\begin{align*}  
\rho_A \oc(\Delta_A) = \Delta_A (\rho_A \otimes \rho_A) \mathsf{m}_{\oc A, \oc A} && \rho_A \oc(\mathsf{e}_A) = \mathsf{e}_A \mathsf{m}_R
\end{align*}
\end{enumerate}
A \textbf{linear category} \cite[Definition 2]{Blute2019} is a symmetric monoidal category with a monoidal coalgebra modality. 
\end{definition}

We should note that here we are using the term ``linear category'' in the sense of Blute, Cockett, and Seely as in \cite{blute2015cartesian}, which is the same as Bierman's definition in \cite{bierman1995categorical} but which drops the closed structure requirement. Many examples of monoidal coalgebra modalities can be found throughout the literature, since every categorical model of $\mathsf{MELL}$ admits a monoidal coalgebra modality. For example, Hyland and Schalk provide a nice list of examples in \cite[Section 2.4]{hyland2003glueing}. Examples of coalgebra modalities that are not monoidal can be found in \cite[Section 9]{Blute2019}. 

\begin{proposition}\label{dcircm} \cite[Proposition 2.2]{cockett_lemay_2018} For a monoidal coalgebra modality $(\oc, \rho, \varepsilon, \Delta, \mathsf{e}, \mathsf{m}, \mathsf{m}_R)$, its coderiving transformation $\mathsf{d}^\circ$ (Definition \ref{dcircdef}) satisfies the following equalities: 
\begin{align*}  
\mathsf{m}_{A,B}\mathsf{d}^\circ_{A \otimes B} = (\mathsf{d}^\circ_A \otimes \mathsf{d}^\circ_B)(1_{\oc A} \otimes \sigma_{A, \oc B} \otimes 1_B)(\mathsf{m}_{A,B} \otimes 1_A \otimes 1_B) && \mathsf{m}_R \mathsf{d}^\circ_R = \mathsf{m}_R
\end{align*}
\end{proposition}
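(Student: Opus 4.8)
The plan is to prove both identities by unfolding the definition $\mathsf{d}^\circ_A = \Delta_A(1_{\oc A}\otimes\varepsilon_A)$ and then transporting the lax monoidal structure map $\mathsf{m}$ through the comultiplication and counit, using the fact that $\Delta$ and $\varepsilon$ are monoidal transformations (parts (iii) and (ii) of Definition \ref{moncoalgmod}). Both equations then become short diagram chases, the only subtlety being the bookkeeping of the symmetry $\sigma$ in the first one.

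For the first equality, I would begin from the left-hand side $\mathsf{m}_{A,B}\mathsf{d}^\circ_{A\otimes B} = \mathsf{m}_{A,B}\Delta_{A\otimes B}(1_{\oc(A\otimes B)}\otimes\varepsilon_{A\otimes B})$. Applying the monoidality of $\Delta$ from (iii), namely $\mathsf{m}_{A,B}\Delta_{A\otimes B} = (\Delta_A\otimes\Delta_B)(1_{\oc A}\otimes\sigma_{\oc A,\oc B}\otimes 1_{\oc B})(\mathsf{m}_{A,B}\otimes\mathsf{m}_{A,B})$, and then using the monoidality of $\varepsilon$ from (ii), $\mathsf{m}_{A,B}\varepsilon_{A\otimes B}=\varepsilon_A\otimes\varepsilon_B$, lets me rewrite the trailing factor $(\mathsf{m}_{A,B}\otimes\mathsf{m}_{A,B})(1_{\oc(A\otimes B)}\otimes\varepsilon_{A\otimes B})$ as $\mathsf{m}_{A,B}\otimes(\varepsilon_A\otimes\varepsilon_B)$ by bifunctoriality of $\otimes$. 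This turns the left-hand side into $(\Delta_A\otimes\Delta_B)(1_{\oc A}\otimes\sigma_{\oc A,\oc B}\otimes 1_{\oc B})(\mathsf{m}_{A,B}\otimes\varepsilon_A\otimes\varepsilon_B)$.

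Expanding the right-hand side by substituting the definitions of $\mathsf{d}^\circ_A$ and $\mathsf{d}^\circ_B$ and pulling the common prefix $\Delta_A\otimes\Delta_B$ to the front, it then remains to verify the purely structural identity
\begin{align*}
&(1_{\oc A}\otimes\varepsilon_A\otimes 1_{\oc B}\otimes\varepsilon_B)(1_{\oc A}\otimes\sigma_{A,\oc B}\otimes 1_B)(\mathsf{m}_{A,B}\otimes 1_A\otimes 1_B) \\
&\qquad = (1_{\oc A}\otimes\sigma_{\oc A,\oc B}\otimes 1_{\oc B})(\mathsf{m}_{A,B}\otimes\varepsilon_A\otimes\varepsilon_B)
\end{align*}
as maps out of $\oc A\otimes\oc A\otimes\oc B\otimes\oc B$. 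This is exactly an instance of naturality of the symmetry applied to $\varepsilon_A$ (sliding $\varepsilon_A\colon \oc A \to A$ past the $\oc B$-wire converts $\sigma_{A,\oc B}$ into $\sigma_{\oc A,\oc B}$), combined with bifunctoriality of $\otimes$; in the strict monoidal setting it is an immediate wire-shuffle. I expect this symmetry bookkeeping to be the only place that demands any care, and it is routine.

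The second equality is more direct and has no comparable obstacle. Unfolding $\mathsf{d}^\circ_R = \Delta_R(1_{\oc R}\otimes\varepsilon_R)$ and using the monoidal-unit clause of (iii), $\mathsf{m}_R\Delta_R = \mathsf{m}_R\otimes\mathsf{m}_R$, gives $\mathsf{m}_R\mathsf{d}^\circ_R = (\mathsf{m}_R\otimes\mathsf{m}_R)(1_{\oc R}\otimes\varepsilon_R) = \mathsf{m}_R\otimes(\mathsf{m}_R\varepsilon_R)$. The monoidal-unit clause of (ii), $\mathsf{m}_R\varepsilon_R = 1_R$, then reduces this to $\mathsf{m}_R\otimes 1_R = \mathsf{m}_R$, since $R$ is the (strict) monoidal unit, completing the proof.
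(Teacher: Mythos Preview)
Your proposal is correct. Note, however, that the paper does not supply its own proof of this proposition: it merely cites \cite[Proposition 2.2]{cockett_lemay_2018} and moves on, so there is no in-paper argument to compare against. What you have written is exactly the standard unfolding proof one would expect: expand $\mathsf{d}^\circ = \Delta(1\otimes\varepsilon)$, push $\mathsf{m}$ through using the monoidality of $\Delta$ (Definition~\ref{moncoalgmod}.iii) and of $\varepsilon$ (Definition~\ref{moncoalgmod}.ii), and reconcile the two sides via naturality of the symmetry and bifunctoriality of $\otimes$. Your identification of the residual equality
\[
(1_{\oc A}\otimes\varepsilon_A\otimes 1_{\oc B}\otimes\varepsilon_B)(1_{\oc A}\otimes\sigma_{A,\oc B}\otimes 1_B)(\mathsf{m}_{A,B}\otimes 1_A\otimes 1_B)
= (1_{\oc A}\otimes\sigma_{\oc A,\oc B}\otimes 1_{\oc B})(\mathsf{m}_{A,B}\otimes\varepsilon_A\otimes\varepsilon_B)
\]
as an instance of naturality of $\sigma$ in $\varepsilon_A$ (together with bifunctoriality) is precisely right, and the second equality is the immediate computation you describe.
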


There are multiple equivalent ways of defining a linear category, some of which can be found in \cite{bierman1995categorical,mellies2003categorical,mellies2009categorical,schalk2004categorical}. For example, a linear category can be defined as a symmetric monoidal category equipped with a comonad whose coEilenberg-Moore category is a Cartesian category (i.e. a category with finite products) and such that the canonical adjunction between the base category and the coEilenberg-Moore category is a symmetric monoidal adjunction. Another way, which is of particular interest to this paper, is that in the presence of finite products, a monoidal coalgebra modality can be defined as a coalgebra modality that has the Seely isomorphisms. 

\begin{definition} \label{Seelydef}
 A coalgebra modality $(\oc, \rho, \varepsilon, \Delta, e)$ on a symmetric monoidal category $(\mathbb{X}, \otimes, R, \sigma)$ with finite products $\times$ and terminal object $\mathsf{T}$ is said to have the \textbf{Seely isomorphisms} \cite[Definion 10]{Blute2019} if the natural transformation $\chi_{A,B}: \oc(A \times B) \to \oc A \otimes B$ defined as:
\begin{equation}\label{}\begin{gathered} \chi_{A,B} := \xymatrixcolsep{5pc}\xymatrix{\oc(A \times B) \ar[r]^-{\Delta_{A \times B}} & \oc(A \times B) \otimes \oc(A \times B) \ar[r]^-{\oc(\pi_0) \otimes \oc(\pi_1)} & \oc A \otimes \oc B
 } \end{gathered}\end{equation}
is a natural isomorphism (where $\pi_0: A \times B \to A$ and $\pi_1: A \times B \to B$ are the projection maps of the product) and the map $\mathsf{e}_\mathsf{T}: \oc(\mathsf{T}) \to R$ is an isomorphism. A \textbf{monoidal storage category} \cite[Definition 3.1.4]{blute2015cartesian} is a symmetric monoidal category with finite products and a coalgebra modality which has the Seely isomorphisms. 
\end{definition}

Monoidal storage categories are also sometimes known as \textbf{new Seely categories} \cite{bierman1995categorical,mellies2003categorical}. 

\begin{theorem}\cite[Theorem 3.1.6]{blute2015cartesian} Every monoidal storage category is a linear category and conversely, every linear category with finite products is a monoidal storage category.  
\end{theorem}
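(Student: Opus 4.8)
The plan is to prove the two implications separately; the common engine in both directions is the comparison map $\chi$ of Definition \ref{Seelydef}, whose defining formula already records the interaction between the comultiplication $\Delta$ and the product projections. Before anything else I would record two bookkeeping identities that hold in any monoidal storage category purely by naturality of $\Delta$ and $\mathsf{e}$: writing $\delta^\times_A \colon A \to A \times A$ for the diagonal and $t_A \colon A \to \mathsf{T}$ for the unique map to the terminal object, one has $\Delta_A = \oc(\delta^\times_A)\chi_{A,A}$ and $\mathsf{e}_A = \oc(t_A)\mathsf{e}_\mathsf{T}$. These say that the given comonoid structure is forced to be the one transported from the canonical product-induced comonoid on $A$ through the Seely isomorphisms, and they are what make the later diagram chases collapse to comonad and product identities.

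For the direction \emph{monoidal storage $\Rightarrow$ linear}, I have only the isomorphisms $\chi$ and $\mathsf{e}_\mathsf{T}$ to work with, so the first task is to manufacture the laxator: no map $\oc A \otimes \oc B \to \oc(A \otimes B)$ is available on the nose. I would define $\mathsf{m}_{A,B}$ as the composite, in diagrammatic order, $\chi^{-1}_{A,B}\,\rho_{A\times B}\,\oc\!\left(\chi_{A,B}(\varepsilon_A \otimes \varepsilon_B)\right)$ -- that is, transport $\oc A \otimes \oc B$ back to $\oc(A\times B)$, comultiply into the cofree coalgebra, then corecurse along the evaluation map $\oc(A\times B) \to A \otimes B$ obtained from $\chi$ and the counits; dually $\mathsf{m}_R := \mathsf{e}_\mathsf{T}^{-1}\,\rho_\mathsf{T}\,\oc(\mathsf{e}_\mathsf{T})$. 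I would then check the axioms of Definition \ref{moncoalgmod} in turn: associativity, unitality, and the symmetry equation of the symmetric monoidal functor conditions (i), then that $\rho$ and $\varepsilon$ are monoidal (ii), and finally that $\Delta$ and $\mathsf{e}$ are monoidal and $\oc$-coalgebra morphisms (iii)--(iv). Each verification feeds the two bookkeeping identities above, naturality of $\chi$, and the comonad laws of Definition \ref{coalgdef} into one another.

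For the converse, \emph{linear with finite products $\Rightarrow$ monoidal storage}, I now possess a full monoidal coalgebra modality and need only show $\chi$ and $\mathsf{e}_\mathsf{T}$ are invertible. Here the inverse is the natural thing to write down: I would set $\chi^{-1}_{A,B} := (\rho_A \otimes \rho_B)\,\mathsf{m}_{\oc A, \oc B}\,\oc\!\left(\langle \varepsilon_A \otimes \mathsf{e}_B,\ \mathsf{e}_A \otimes \varepsilon_B\rangle\right)$, using that the counits $\mathsf{e}$ let me build a map $\oc A \otimes \oc B \to A \times B$ into the product, and $\mathsf{e}_\mathsf{T}^{-1} := \mathsf{m}_R\,\oc(t_R)$. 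The remaining work is to verify $\chi\,\chi^{-1} = 1$ and $\chi^{-1}\,\chi = 1$ (and likewise for $\mathsf{e}_\mathsf{T}$), which unfolds into the compatibilities of $\mathsf{m}$ with $\rho$, $\varepsilon$, $\Delta$, and $\mathsf{e}$ recorded in Definition \ref{moncoalgmod} together with the comonad laws.

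I expect the genuine obstacle to sit in the first direction. Producing the laxator $\mathsf{m}$ from the bare Seely data is the one non-formal move, and once it is written down the most laborious check is that $\rho$ is monoidal -- the first equation of condition (ii) of Definition \ref{moncoalgmod}, $\mathsf{m}_{A,B}\rho_{A\otimes B} = (\rho_A \otimes \rho_B)\mathsf{m}_{\oc A, \oc B}\oc(\mathsf{m}_{A,B})$ -- together with the associativity pentagon for $\mathsf{m}$ in (i); both require repeatedly commuting $\rho$ past $\chi$ using naturality of the Seely isomorphism and the coassociativity $\rho_A \rho_{\oc A} = \rho_A \oc(\rho_A)$, and it is in reconciling the two different ways of comultiplying $\oc(A \times B)$ that the argument is most delicate. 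The second direction, by contrast, is essentially a pair of bookkeeping computations once the inverse maps are guessed correctly.
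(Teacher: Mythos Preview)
Your proposal is correct and is in fact the standard direct argument one finds in the cited reference \cite{blute2015cartesian}. The constructions you write down for $\mathsf{m}_{A,B}$ and $\chi^{-1}_{A,B}$ are the right ones, the two bookkeeping identities $\Delta_A = \oc(\delta^\times_A)\chi_{A,A}$ and $\mathsf{e}_A = \oc(t_A)\mathsf{e}_\mathsf{T}$ are exactly what makes the axiom verifications go through, and your assessment that the monoidality of $\rho$ and the associativity pentagon for $\mathsf{m}$ are the most delicate checks is accurate.

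However, you should be aware that the present paper does \emph{not} give a proof of this theorem at all: it is stated with a citation to \cite[Theorem 3.1.6]{blute2015cartesian}, and the paper explicitly refers the reader there for the constructions. What the paper does offer, in the paragraph following the statement, is a one-sentence conceptual sketch of a different route: a linear category can equivalently be described as a symmetric monoidal category with a comonad whose coEilenberg--Moore adjunction is symmetric monoidal, and a monoidal storage category as one whose coKleisli adjunction is symmetric monoidal; since the coKleisli adjunction always factors through the coEilenberg--Moore adjunction, the equivalence can be deduced from that factorization. This is a genuinely different organization of the argument from yours --- it packages all the axiom-checking into the general theory of monoidal adjunctions and their composites, rather than verifying Definition \ref{moncoalgmod} equation by equation. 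Your approach has the virtue of being self-contained and making the formulas for $\mathsf{m}$ and $\chi^{-1}$ explicit (which is what the paper actually needs later, e.g.\ in Corollary \ref{KJcor}); the adjunction-factorization approach is shorter to state but relies on background machinery that the paper does not develop.
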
 

In particular, the above theorem implies that, in the presence of finite products, every coalgebra modality with the Seely isomorphisms is a monoidal coalgebra modality and conversely that every monoidal coalgebra modality has the Seely isomorphisms. To see how to construct one from the other see \cite[Section 3.1]{blute2015cartesian}. Once again, there is multiple equivalent ways of defining a monoidal storage category \cite{bierman1995categorical,mellies2003categorical,mellies2009categorical,schalk2004categorical}. For example, a monoidal storage category can be defined as a symmetric monoidal category with finite products equipped with a comonad whose coKleisli category is a Cartesian category and such that the canonical adjunction between the base category and the coKleisli category is a symmetric monoidal adjunction. With this in mind, the above theorem may be derived by considering the fact that the canonical adjunction for the coKleisli category factors through the coEilenberg-Moore category. In Sections \ref{polysec}, \ref{bisec}, and \ref{consec} we will explain why each coalgebra modality has the Seely isomorphisms and is therefore also a monoidal coalgebra modality.

We now turn our attention back to differential categories: 

\begin{definition}\label{difflindef} A \textbf{differential linear category} is a differential category whose coalgebra modality is a monoidal coalgebra modality. A \textbf{differential storage category} is a differential linear category with finite products. 
\end{definition}

The definition of a differential linear category might seem a bit lacking. Indeed, one might expect some compatibility coherences between the deriving transformation $\mathsf{d}$ and the symmetric monoidal endofunctor structure. However, this comes for free and said coherence is known as the \textbf{monoidal rule} \cite[Theorem 4]{Blute2019}. It is also worth mentioning that the differential structure of a differential linear category can be equivalent be axiomatized by a natural transformation ${\eta_A: A \to \oc A}$ known as the \textbf{codereliction} \cite{blute2006differential,Blute2019,fiore2007differential}. That said, it is the deriving transformation that plays the more important role when discussing integration and antiderivatives (though of course, the deriving transformation is built from a codereliction in a differential linear category \cite[Theorem 4]{Blute2019}). 

We conclude this section with a useful property of $\mathsf{K}$ and $\mathsf{J}$ for differential linear categories. 

\begin{proposition}\label{KJm} \cite[Proposition 4.5]{cockett_lemay_2018} In a differential linear category, 
\begin{enumerate}[{\em (i)}]
\item $\mathsf{K}$ satisfies the following equality: 
\begin{align*} 
(\mathsf{K}_A \otimes 1_{\oc B}) \mathsf{m}_{A, B} = \mathsf{m}_{A, B} \mathsf{K}_{A \otimes B} = (1_{\oc A} \otimes \mathsf{K}_B)\mathsf{m}_{A, B} 
\end{align*}
\item $\mathsf{J}$ satisfies the following equality: 
\begin{align*}  
(\mathsf{J}_A \otimes 1_{\oc B}) \mathsf{m}_{A, B} = \mathsf{m}_{A, B} \mathsf{J}_{A \otimes B} = (1_{\oc A} \otimes \mathsf{J}_B)\mathsf{m}_{A, B} 
\end{align*}
\end{enumerate}
%\begin{align*} 
%(\oc(0) \otimes 1_{\oc B})\mathsf{m}_{A, B} = \mathsf{m}_{A,B} \oc(0) = (1_{\oc A} \otimes \oc(0))\mathsf{m}_{A,B} 
%\end{align*}
\end{proposition}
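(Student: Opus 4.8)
The plan is to prove the three-term equality (i) for $\mathsf{K}$ in full, and then observe that (ii) for $\mathsf{J}$ is entirely analogous. Indeed, by Definition \ref{KJdef} the two natural transformations differ only in their additive constant summand, $\mathsf{K}_A = \mathsf{d}^\circ_A\mathsf{d}_A + \oc(0)$ and $\mathsf{J}_A = \mathsf{d}^\circ_A\mathsf{d}_A + 1_{\oc A}$, so both statements reduce to one core identity for the common summand $\mathsf{d}^\circ_A\mathsf{d}_A$ plus a routine check on the constant summand. Furthermore, it suffices to establish the left-hand equality
\[ (\mathsf{K}_A \otimes 1_{\oc B})\mathsf{m}_{A,B} = \mathsf{m}_{A,B}\mathsf{K}_{A \otimes B} \]
for all $A,B$: the right-hand equality then follows by instantiating this at $(B,A)$ and conjugating by the symmetry. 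Concretely, starting from $(\mathsf{K}_B \otimes 1_{\oc A})\mathsf{m}_{B,A} = \mathsf{m}_{B,A}\mathsf{K}_{B \otimes A}$, one precomposes with $\sigma_{\oc A, \oc B}$ and uses naturality of $\sigma$, naturality of $\mathsf{K}$ at $\sigma_{A,B}$, and the law $\sigma_{\oc A, \oc B}\mathsf{m}_{B,A} = \mathsf{m}_{A,B}\oc(\sigma_{A,B})$ from Definition \ref{moncoalgmod}; since $\oc(\sigma_{A,B})$ is invertible it cancels, yielding $\mathsf{m}_{A,B}\mathsf{K}_{A \otimes B} = (1_{\oc A} \otimes \mathsf{K}_B)\mathsf{m}_{A,B}$.

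For the constant summand I would invoke naturality of $\mathsf{m}$. Taking $f = 0\colon A \to A$ and $g = 1_B$ in the naturality square $(\oc f \otimes \oc g)\mathsf{m}_{A,B} = \mathsf{m}_{A,B}\oc(f \otimes g)$, and using that $0_A \otimes 1_B = 0_{A \otimes B}$ in an additive symmetric monoidal category (Definition \ref{addcatdef}), gives $(\oc(0) \otimes 1_{\oc B})\mathsf{m}_{A,B} = \mathsf{m}_{A,B}\oc(0)$, which is exactly the required compatibility of the $\oc(0)$-summands of $\mathsf{K}$. For $\mathsf{J}$ the corresponding summand is $1_{\oc A}$, and $(1_{\oc A} \otimes 1_{\oc B})\mathsf{m}_{A,B} = \mathsf{m}_{A,B}$ holds trivially.

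The heart of the argument is then the core identity
\[ (\mathsf{d}^\circ_A\mathsf{d}_A \otimes 1_{\oc B})\,\mathsf{m}_{A,B} = \mathsf{m}_{A,B}\,\mathsf{d}^\circ_{A \otimes B}\mathsf{d}_{A \otimes B}. \]
I would begin from the right-hand side and rewrite $\mathsf{m}_{A,B}\mathsf{d}^\circ_{A \otimes B}$ using Proposition \ref{dcircm}, which re-expresses it through $\mathsf{d}^\circ_A \otimes \mathsf{d}^\circ_B$, a symmetry rewiring, and $\mathsf{m}_{A,B} \otimes 1_A \otimes 1_B$. This isolates the subterm $(\mathsf{m}_{A,B} \otimes 1_{A \otimes B})\mathsf{d}_{A \otimes B}$, which I would expand using the monoidal rule for the deriving transformation -- the deriving-transformation analogue of Proposition \ref{dcircm}, which holds automatically in every differential linear category (\cite[Theorem 4]{Blute2019}). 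After substituting both monoidal rules, the two intermediate symmetry isomorphisms compose, and the contributions arising from the $B$-strand are reorganized using the comonoid and counit identities of Definition \ref{coalgdef} together with the symmetric monoidal coherences, so that the $\oc B$-factor passes through untouched as $1_{\oc B}$ and only the $A$-side operator $\mathsf{d}^\circ_A\mathsf{d}_A$ survives, recovering the left-hand side.

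I expect the main obstacle to be precisely this final collapse: faithfully tracking the several symmetry and associativity rewirings produced by the two monoidal rules and verifying that the mixed $A/B$ contributions reorganize to leave $1_{\oc B}$ on the $B$-strand rather than a second copy of a number operator. Morally this works because $\mathsf{m}$ matches the degree on the $A$-side with the degree on the $B$-side -- so that multiplying by either agrees -- and in the intuitive picture $\mathsf{d}^\circ\mathsf{d}$ is exactly multiplication by degree; but rendering that matching purely diagrammatically is the delicate step. Once the core identity is secured, combining it with the constant-summand check and the symmetry reduction of the first paragraph completes both (i) and (ii).
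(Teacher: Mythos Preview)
The paper does not actually prove this proposition: it is stated with a citation to \cite[Proposition~4.5]{cockett_lemay_2018} and no proof is supplied. So there is no ``paper's own proof'' to compare against; I can only assess your outline on its merits.

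Your reductions are sound. Splitting $\mathsf{K}_A = \mathsf{d}^\circ_A\mathsf{d}_A + \oc(0)$ and handling the constant summand via naturality of $\mathsf{m}$ at $(0_A,1_B)$ (using $0_A \otimes 1_B = 0_{A\otimes B}$) is correct, and the symmetry argument reducing the right-hand equality to the left-hand one is fine. The analogous reduction for $\mathsf{J}$ is likewise unproblematic.

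The gap is exactly where you flag it: the ``collapse'' of the $B$-strand in the core identity
\[
(\mathsf{d}^\circ_A\mathsf{d}_A \otimes 1_{\oc B})\,\mathsf{m}_{A,B} \;=\; \mathsf{m}_{A,B}\,\mathsf{d}^\circ_{A\otimes B}\mathsf{d}_{A\otimes B}.
\]
If one substitutes Proposition~\ref{dcircm} for $\mathsf{m}\,\mathsf{d}^\circ_{A\otimes B}$ and then the analogous monoidal rule for $\mathsf{d}$ on $(\mathsf{m}\otimes 1_{A\otimes B})\mathsf{d}_{A\otimes B}$, the two intermediate symmetries cancel and one obtains
\[
(\mathsf{d}^\circ_A \otimes \mathsf{d}^\circ_B)(\mathsf{d}_A \otimes \mathsf{d}_B)\,\mathsf{m}_{A,B} \;=\; (\mathsf{d}^\circ_A\mathsf{d}_A \otimes \mathsf{d}^\circ_B\mathsf{d}_B)\,\mathsf{m}_{A,B},
\]
i.e.\ a number operator on \emph{each} tensor factor, not on the $A$-factor alone. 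No comonoid or counit identity of Definition~\ref{coalgdef} will turn $\mathsf{d}^\circ_B\mathsf{d}_B$ into $1_{\oc B}$, since $\mathsf{d}^\circ_B\mathsf{d}_B \neq 1_{\oc B}$ in general. Your informal picture (``$\mathsf{m}$ matches degrees, so multiplying by either side's degree agrees'') is exactly why the core identity is \emph{true}, but the diagrammatic route you describe does not realise that picture: it produces the product of the two degrees rather than either one separately. To close the argument you need a genuinely asymmetric ingredient --- for instance working through the codereliction $\eta$ and the bialgebra multiplication (which are the tools used in \cite{Blute2019,cockett_lemay_2018}) rather than applying the two monoidal rules symmetrically. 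As written, step~4 is a hope rather than a proof.
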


%%%%%%%%%%%%%%%%%%%%%

\section{Differential Categories with Antiderivatives}\label{diffantisec}

In classical single-variable calculus, differentiation and integration are related by the two Fundamental Theorems of Calculus. Differential categories with antiderivatives were introduced to study and interpret integration and the Fundamental Theorems of Calculus in the differential category setting. In this section we give a brief overview of differential categories with \emph{antiderivatives} (Definition \ref{antidef}) and discuss and certain important consequences of having antiderivatives (Proposition \ref{FT2prop}). For more details on the story of integration and antiderivatives, we refer the reader to \cite{cockett_lemay_2018,ehrhard2017introduction}. 

\begin{definition}\label{antidef} A differential categories is said to have \textbf{antiderivatives} \cite[Definition 6.1]{cockett_lemay_2018} if $\mathsf{K}$ is a natural isomorphism. 
\end{definition}

Ehrhard's definition of antiderivatives \cite[Section 3.2]{ehrhard2017introduction} was that $\mathsf{J}$ be a natural transformation instead. While this was sufficient to construct an integral and prove Poincar\'e's Lemma \cite[Proposition 13]{ehrhard2017introduction}, one does not necessarily obtain the Second Fundamental Theorem of Calculus for free. On the other hand, $\mathsf{K}$ allows one to construct an integral which satisfies both the Poincar\'e's Lemma as well as the Second Fundamental Theorem of Calculus (Proposition \ref{FT2prop}), and also implies that $\mathsf{J}$ is a natural isomorphism. To obtain the Second Fundamental Theorem of Calculus from $\mathsf{J}$ being a natural isomorphism, this required an extra assumption about the deriving transformation known as the \emph{Taylor} property. In fact, this Taylor property provides an equivalence between the two definitions of antiderivatives. 

\begin{proposition} \cite[Proposition 6.1]{cockett_lemay_2018} For a differential category, the following are equivalent: 
\begin{enumerate}[{\em (i)}]
\item $\mathsf{K}$ is a natural isomorphism; 
\item $\mathsf{J}$ is a natural isomorphism and the deriving transformation $\mathsf{d}$ is \textbf{Taylor} \cite[Definition 5.3]{cockett_lemay_2018}, that is, if for maps $f: \oc A \to B$ and $g: \oc A \to B$ such that $\mathsf{d}_A f = \mathsf{d}_A g$, then:
\[f + \oc(0)g= g + \oc(0)f\] 
\end{enumerate}
\end{proposition}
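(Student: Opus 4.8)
Here is how I would approach it. The plan is to push the whole statement into elementary algebra among the endomorphisms $\mathsf{K}_A$, $\mathsf{J}_A$, the idempotent $\oc(0)$, and $\mathsf{P}_A := \mathsf{d}^\circ_A\mathsf{d}_A$ of $\oc A$, and then to let the Taylor property stand in for the additive inverses the ambient category need not possess. First I would record the elementary identities. By Definition~\ref{KJdef}, $\mathsf{K}_A = \mathsf{P}_A + \oc(0)$ and $\mathsf{J}_A = \mathsf{P}_A + 1_{\oc A}$, so in particular $\mathsf{K}_A + 1_{\oc A} = \mathsf{J}_A + \oc(0)$. Functoriality gives $\oc(0)\oc(0) = \oc(0)$, while naturality of $\mathsf{d}_A$ and $\mathsf{d}^\circ_A$ together with the additive tensor axiom $\oc(0)\otimes 0 = 0$ yield $\mathsf{d}_A\oc(0) = 0$ and $\oc(0)\mathsf{d}^\circ_A = 0$; hence $\mathsf{P}_A\oc(0) = \oc(0)\mathsf{P}_A = 0$ and $\mathsf{K}_A\oc(0) = \oc(0) = \oc(0)\mathsf{K}_A$. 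I will also use the intertwiners of Proposition~\ref{KJp}, which after deleting the (vanishing) $\oc(0)$-contributions take the form $\mathsf{P}_A\mathsf{d}^\circ_A = \mathsf{d}^\circ_A(\mathsf{J}_A\otimes 1_A)$ and $\mathsf{d}_A\mathsf{P}_A = (\mathsf{J}_A\otimes 1_A)\mathsf{d}_A$; morally these say that $\mathsf{P}_A$ is a degree operator whose value rises by one across $\mathsf{d}_A$.

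For (i)~$\Rightarrow$~(ii) I would first obtain the Taylor property. Suppose $\mathsf{d}_A f = \mathsf{d}_A g$. Expanding $\mathsf{K}_A = \mathsf{d}^\circ_A\mathsf{d}_A + \oc(0)$ and substituting $\mathsf{d}_A f = \mathsf{d}_A g$ gives, with no cancellation required,
\[ \mathsf{K}_A f + \oc(0)g = \mathsf{d}^\circ_A\mathsf{d}_A g + \oc(0)f + \oc(0)g = \mathsf{K}_A g + \oc(0)f. \]
Precomposing with $\mathsf{K}_A^{-1}$ and using $\mathsf{K}_A^{-1}\oc(0) = \oc(0)$ (immediate from $\mathsf{K}_A\oc(0) = \oc(0)$) yields $f + \oc(0)g = g + \oc(0)f$, which is exactly the Taylor condition.

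The core of the argument is the transfer of invertibility between $\mathsf{K}_A$ and $\mathsf{J}_A$, and here the clean direction is (ii)~$\Rightarrow$~(i). Assuming $\mathsf{J}_A$ invertible and $\mathsf{d}$ Taylor, I would set $F := \mathsf{d}^\circ_A(\mathsf{J}_A^{-1}\otimes 1_A)\mathsf{d}_A$ and propose the inverse
\[ \mathsf{K}_A^{-1} := \oc(0) + \mathsf{d}^\circ_A(\mathsf{J}_A^{-2}\otimes 1_A)\mathsf{d}_A. \]
Using the two intertwiners to rewrite $\mathsf{K}_A\mathsf{d}^\circ_A$ and $\mathsf{d}_A\mathsf{K}_A$, both composites of $\mathsf{K}_A$ with this candidate collapse to $\oc(0) + F$, so the whole verification reduces to the single identity $F + \oc(0) = 1_{\oc A}$. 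To prove it I note first that $\mathsf{P}_A F = \mathsf{P}_A$ (apply $\mathsf{P}_A\mathsf{d}^\circ_A = \mathsf{d}^\circ_A(\mathsf{J}_A\otimes 1_A)$ and cancel one copy of $\mathsf{J}_A$), whence $(\mathsf{J}_A\otimes 1_A)\mathsf{d}_A F = \mathsf{d}_A\mathsf{P}_A F = \mathsf{d}_A\mathsf{P}_A = (\mathsf{J}_A\otimes 1_A)\mathsf{d}_A$; cancelling the isomorphism $\mathsf{J}_A\otimes 1_A$ gives $\mathsf{d}_A F = \mathsf{d}_A$. Since also $\oc(0)F = 0$, the Taylor property applied to $F$ and $1_{\oc A}$ delivers $F + \oc(0) = 1_{\oc A} + \oc(0)F = 1_{\oc A}$, as needed; the candidate is manifestly natural.

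The step I expect to be the genuine obstacle is the remaining implication, that $\mathsf{K}_A$ invertible forces $\mathsf{J}_A$ invertible. The root difficulty is an asymmetry: a morphism intertwining $(\oc A,\mathsf{K}_A)$ with $(\oc A,\mathsf{J}_A)$ would have to raise the $\oc$-degree by one, and in a general differential category---lacking a codereliction---no composite of $\mathsf{d}_A,\mathsf{d}^\circ_A$ out of $\oc A$ does this, so the down-then-up trick that produced $\mathsf{K}_A^{-1}$ has no direct analogue for $\mathsf{J}_A^{-1}$. The intertwiners instead relate $\mathsf{K}_A$ to $\mathsf{J}_A\otimes 1_A$ on $\oc A\otimes A$: with $\mathsf{d}_A$ and $\mathsf{K}_A^{-1}\mathsf{d}^\circ_A$ as structure maps one has a candidate retraction whose round-trip is $\mathsf{d}_A\mathsf{K}_A^{-1}\mathsf{d}^\circ_A$, but Proposition~\ref{Wp} shows $\mathsf{d}_A\mathsf{d}^\circ_A$ is not scalar on $\oc A\otimes A$, so this round-trip is not the identity and the naive detour does not close. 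My plan is therefore to extract more from the intertwiners---which encode that $\mathsf{P}_A$ is a number operator---and to use Taylor once more as the cancellation principle that compensates for the missing subtraction, in order to manufacture a two-sided inverse of $\mathsf{J}_A$ directly; failing a constructive route, one may invoke the relations between $\mathsf{K}$ and $\mathsf{J}$ already available in the cited prior work, by which $\mathsf{K}$ invertible is known to make $\mathsf{J}$ invertible. Making this last implication rigorous, without additive inverses and without a grading, is where I expect the real work to concentrate.
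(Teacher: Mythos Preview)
Your treatment of (ii)$\Rightarrow$(i) is correct and elegant: the candidate $\mathsf{K}_A^{-1}=\oc(0)+\mathsf{d}^\circ_A(\mathsf{J}_A^{-2}\otimes 1_A)\mathsf{d}_A$, the reduction of both composites to $\oc(0)+F$, and the use of Taylor to extract $F+\oc(0)=1_{\oc A}$ from $\mathsf{d}_A F=\mathsf{d}_A$ are all sound. Your derivation of the Taylor property from invertibility of $\mathsf{K}$ is likewise correct.

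The genuine gap is exactly where you locate it: you do not prove that $\mathsf{K}$ invertible forces $\mathsf{J}$ invertible, and you say so explicitly. Your fallback of ``invoking the relations \ldots\ in the cited prior work'' is circular, since the cited prior work \emph{is} this proposition. The paper itself gives no proof here either---the result is simply quoted from \cite{cockett_lemay_2018}---so there is no paper argument to compare against, but your proposal as written does not establish the equivalence. Your diagnosis of the obstacle is accurate: the intertwiners of Proposition~\ref{KJp} only relate $\mathsf{K}_A$ on $\oc A$ to $\mathsf{J}_A\otimes 1_A$ on $\oc A\otimes A$, and since $\mathsf{d}_A\mathsf{d}^\circ_A\neq 1_{\oc A\otimes A}$ (Proposition~\ref{Wp}) the naive transfer of invertibility fails. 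What closes the gap in \cite{cockett_lemay_2018} is not a further elementary identity among $\mathsf{K},\mathsf{J},\mathsf{P},\oc(0)$ but rather the verification that $\mathsf{s}_A:=\mathsf{K}_A^{-1}\mathsf{d}^\circ_A$ is a full integral transformation (using \textbf{[d.2]} and \textbf{[d.5]} to obtain the Rota--Baxter rule), after which $\mathsf{J}^{-1}$ is constructed from the integral structure. Without importing that machinery, the implication (i)$\Rightarrow$(ii) remains unproved in your proposal.
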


To provide some intuition about the Taylor property, first note that precomposing with $\oc(0)$ is to be thought of as evaluating a smooth map $f: \oc A \to B$ at $0$, which therefore results in a constant function. That the deriving transformation is Taylor says that two smooth maps with the same derivative differ simply by a constant. 

In a differential category with antiderivatives, the integral is constructed as follows (which is equal to the integral constructed in \cite{ehrhard2017introduction}): 

\begin{definition} \label{ASdef} In a differential category with antiderivatives, the \textbf{antiderivative integral transformation} \cite[Definition 6.2]{cockett_lemay_2018} is the natural transformation $\mathsf{s}_A: \oc A \to \oc A \otimes A$ defined as follows: 
\begin{equation}\label{Sdef}\begin{gathered} \mathsf{s}_A := \xymatrixcolsep{5pc}\xymatrix{ \oc A \ar[r]^-{\mathsf{K}^{-1}_A} & \oc A \ar[r]^-{\mathsf{d}^\circ_A} & \oc A \otimes A} \end{gathered}\end{equation}
\end{definition}

Similar to the deriving transformation, the antiderivative integral transformation satisfies the basic axioms of integration from classical calculus such as the Rota-Baxter rule \cite{guo2012introduction}, the integral of a constant function is a linear function, and polynomial integration. The antiderivative integral transformation is an example of the more general concept of an integral transformation \cite[Definition 3.4]{cockett_lemay_2018} which axiomatizes integration separate from differentiation. In particular, one can integrate maps of type $f: \oc A \otimes A \to B$, where the integral is the smooth map $\mathsf{S}[f]: \oc A \to B$ defined as the composite $\mathsf{S}[f] : = \mathsf{s}_A f$. For more intuition on how to interpret this integral and examples of differential categories with antiderivatives, see \cite{cockett_lemay_2018,ehrhard2017introduction}. 

Here is a list of important coherences between the differential and integral structure of a differential category with antiderivatives: 

\begin{proposition}\label{FT2prop} In a differential category with antiderivatives:
\begin{enumerate}[{\em (i)}]
\item The antiderivative integral transformation satisfies the following equality:
\[\mathsf{s}_A = \mathsf{d}^\circ_A(\mathsf{J}^{-1}_A \otimes 1_A)\] 
\item The deriving transformation $\mathsf{d}$ and the antiderivative integral transformation $\mathsf{s}$ satisfy the \textbf{Second Fundamental Theorem of Calculus} \cite[Definition 5.1]{cockett_lemay_2018}, that is, the following equality holds: 
\begin{equation}\label{FT2}\begin{gathered} \mathsf{s}_A \mathsf{d}_A + \oc(0) = 1 \end{gathered}\end{equation}
\item The deriving transformation $\mathsf{d}$ and the antiderivative integral transformation $\mathsf{s}$ satisfy the \textbf{Poincar\'e Condition} \cite[Definition 5.5]{cockett_lemay_2018}, that is, if a map $f: \oc A \otimes A \to B$ satisfies the following equality: 
\[(\mathsf{d}_A \otimes 1_A)f= (1_{\oc A} \otimes \sigma_{A,A})(\mathsf{d}_A \otimes 1_A)f\]
then $\mathsf{d}_A\mathsf{s}_A f = f$. 
\item For the monoidal unit $R$, the deriving transformation $\mathsf{d}_R: \oc R \to \oc R$ and the antiderivative integral transformation $\mathsf{s}_R: \oc R \to \oc R$ satisfy the \textbf{First Fundamental Theorem of Calculus} \cite[Definition 5.7]{cockett_lemay_2018}, that is, the following equality holds: 
\begin{equation}\label{FT1}\begin{gathered} \mathsf{d}_R\mathsf{s}_R = 1_{\oc R}\end{gathered}\end{equation}
\end{enumerate}
\end{proposition}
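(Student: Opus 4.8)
The plan is to prove the four items in order, using only the structural relations among $\mathsf{K}$, $\mathsf{J}$, $\mathsf{d}$, and $\mathsf{d}^\circ$ recorded in Propositions \ref{Wp} and \ref{KJp} together with Definitions \ref{KJdef} and \ref{ASdef}. Two standing remarks guide everything. First, by the equivalence established just above, $\mathsf{K}$ being a natural isomorphism forces $\mathsf{J}$ to be one as well, so $\mathsf{J}^{-1}$ exists and $(\mathsf{J}_A \otimes 1_A)^{-1} = \mathsf{J}^{-1}_A \otimes 1_A$; these inverses will be used freely. Second, since the category is only commutative-monoid enriched, I will never subtract: every apparent cancellation is instead carried out as an additive identity. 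For (i), I would begin from $\mathsf{K}_A \mathsf{d}^\circ_A = \mathsf{d}^\circ_A(\mathsf{J}_A \otimes 1_A)$ (Proposition \ref{KJp}); applying $\mathsf{K}^{-1}_A$ beforehand and $(\mathsf{J}^{-1}_A \otimes 1_A)$ afterwards gives $\mathsf{K}^{-1}_A \mathsf{d}^\circ_A = \mathsf{d}^\circ_A(\mathsf{J}^{-1}_A \otimes 1_A)$, and the left-hand side is $\mathsf{s}_A$ by Definition \ref{ASdef}.

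For (ii), I would expand $\mathsf{s}_A \mathsf{d}_A = \mathsf{K}^{-1}_A \mathsf{d}^\circ_A \mathsf{d}_A$ and use $\mathsf{d}^\circ_A \mathsf{d}_A + \oc(0) = \mathsf{K}_A$ from Definition \ref{KJdef}. Distributing $\mathsf{K}^{-1}_A$ over the sum gives $1_{\oc A} = \mathsf{K}^{-1}_A \mathsf{K}_A = \mathsf{K}^{-1}_A \mathsf{d}^\circ_A \mathsf{d}_A + \mathsf{K}^{-1}_A \oc(0)$. The relation $\mathsf{K}_A \oc(0) = \oc(0)$ of Proposition \ref{KJp} yields $\mathsf{K}^{-1}_A \oc(0) = \oc(0)$, so this reads $\mathsf{s}_A \mathsf{d}_A + \oc(0) = 1_{\oc A}$, which is exactly \eqref{FT2}, all without subtraction.

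Part (iii) is the computational heart. Here I would first move $\mathsf{d}_A$ past $\mathsf{K}^{-1}_A$: from $\mathsf{d}_A \mathsf{K}_A = (\mathsf{J}_A \otimes 1_A)\mathsf{d}_A$ (Proposition \ref{KJp}) one gets $\mathsf{d}_A \mathsf{K}^{-1}_A = (\mathsf{J}^{-1}_A \otimes 1_A)\mathsf{d}_A$, hence $\mathsf{d}_A \mathsf{s}_A = (\mathsf{J}^{-1}_A \otimes 1_A)\,\mathsf{d}_A \mathsf{d}^\circ_A$. Substituting \eqref{W} for $\mathsf{d}_A \mathsf{d}^\circ_A$ and then post-composing with $f$, the hypothesis $(1_{\oc A} \otimes \sigma_{A,A})(\mathsf{d}_A \otimes 1_A)f = (\mathsf{d}_A \otimes 1_A)f$ rewrites the swapped summand into $(\mathsf{d}^\circ_A \mathsf{d}_A \otimes 1_A)f$, leaving
\[\mathsf{d}_A \mathsf{s}_A f = \big((\mathsf{J}^{-1}_A \mathsf{d}^\circ_A \mathsf{d}_A) \otimes 1_A\big)f + (\mathsf{J}^{-1}_A \otimes 1_A)f = \big((\mathsf{J}^{-1}_A(\mathsf{d}^\circ_A \mathsf{d}_A + 1_{\oc A})) \otimes 1_A\big)f.\]
Since $\mathsf{d}^\circ_A \mathsf{d}_A + 1_{\oc A} = \mathsf{J}_A$ (Definition \ref{KJdef}), the bracket collapses to $1_{\oc A} \otimes 1_A$ and we recover $f$. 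I expect this step to demand the most care: the hypothesis has to be applied to a sub-composite sitting inside a larger expression, and the combination of \eqref{W} with the $\mathsf{J}$-relation must be regrouped additively (again avoiding any subtraction) before the factor $\mathsf{J}^{-1}_A \mathsf{J}_A = 1_{\oc A}$ becomes visible.

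Finally, (iv) falls out of (iii). Over the monoidal unit one has $R \otimes R = R$ and, by the symmetric-monoidal coherence for the unit, $\sigma_{R,R} = 1_R$; consequently the Poincar\'e hypothesis of (iii) is automatically satisfied for every map out of $\oc R$. Taking $A = R$ and $f = 1_{\oc R}$ in (iii) then gives $\mathsf{d}_R \mathsf{s}_R = 1_{\oc R}$, which is \eqref{FT1}. Alternatively, specializing \eqref{W} at $R$ collapses it to $\mathsf{d}_R \mathsf{d}^\circ_R = \mathsf{J}_R$, which combined with the unit instance $\mathsf{s}_R = \mathsf{d}^\circ_R \mathsf{J}^{-1}_R$ of (i) yields $\mathsf{d}_R \mathsf{s}_R = \mathsf{J}_R \mathsf{J}^{-1}_R = 1_{\oc R}$.
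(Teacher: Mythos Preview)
Your argument is correct in all four parts. The paper itself does not supply a proof of Proposition~\ref{FT2prop}; it states the result and points to \cite{cockett_lemay_2018} for the underlying definitions and arguments, so there is no ``paper's proof'' to compare against beyond the background identities in Propositions~\ref{Wp} and~\ref{KJp}. Your proof is exactly the sort of self-contained derivation one would expect from those identities: each step uses only Definition~\ref{KJdef}, Definition~\ref{ASdef}, the relations $\mathsf{K}_A\mathsf{d}^\circ_A = \mathsf{d}^\circ_A(\mathsf{J}_A\otimes 1_A)$, $\mathsf{d}_A\mathsf{K}_A = (\mathsf{J}_A\otimes 1_A)\mathsf{d}_A$, $\mathsf{K}_A\oc(0)=\oc(0)$, and equation~\eqref{W}, together with invertibility of $\mathsf{K}$ and $\mathsf{J}$. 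The care you take in~(ii) and~(iii) to argue purely additively (never cancelling a summand) is appropriate for the commutative-monoid enrichment. In~(iii), the key regrouping---applying the Poincar\'e hypothesis inside the larger composite to convert $(\mathsf{d}^\circ_A\otimes 1_A)(1_{\oc A}\otimes\sigma_{A,A})(\mathsf{d}_A\otimes 1_A)f$ into $((\mathsf{d}^\circ_A\mathsf{d}_A)\otimes 1_A)f$ and then recognising $\mathsf{d}^\circ_A\mathsf{d}_A + 1_{\oc A} = \mathsf{J}_A$---is carried out correctly. Both routes you give for~(iv) are valid; the second one is essentially Lemma~\ref{KJcR}.(iii) combined with~(i), which is how the paper later uses this fact.
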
 

It might be useful to provide a bit of intuition here (for a more detailed explanation we again refer the reader to \cite{cockett_lemay_2018,ehrhard2017introduction}). Recall that the Second Fundamental Theorem of Calculus (in the one variable case) states that the integral of the derivative of a function on a closed interval is equal to the difference of at the end points:
\begin{align*}
\int_a^b\frac{{\sf d} f(t)}{{\sf d}t}(s)~{\sf d}s=f(b)-f(a)
\end{align*}
In a differential category with antiderivatives, every smooth map $f: \oc A \to B$ satisfies the Second Fundamental Theorem of Calculus in the sense that $\mathsf{S}\left[ \mathsf{D}[f] \right] + \oc(0) f = f$. Naively, using notation of single-variable calculus, this last identity should be interpreted as follows: 
\[\int_0^x\frac{{\sf d} f(t)}{{\sf d}t}(s)~{\sf d}s + f(0)=f(x)\]
where we had to do some rearranging since we do not necessarily have negatives. On the other hand, recall that in single-variable calculus, the First Fundamental Theorem of Calculus states that the derivative of the integral of a function is equal to the original function: 
\begin{align*}
\frac{{\sf d} (\int_a^t f(u)~{\sf d}u)}{{\sf d}t}(x)=f(x)
\end{align*}
In differential category with antiderivatives, the First Fundamental Theorem of Calculus does not hold in the sense that $\mathsf{d}\mathsf{s}=1$. Instead the Poincar\'e Condition gives necessary and sufficient conditions for a map $f: \oc A \otimes A \to B$ to satisfy the First Fundamental Theorem of Calculus in the sense that $\mathsf{D}\left[ \mathsf{S}[f] \right] = f$. However, a special case is the monoidal unit, where every coKleisli map $f: \oc R \to B$ satisfies both Fundamental Theorems of Calculus -- making the monoidal unit a \textbf{calculus object} \cite[Definition 5.7]{cockett_lemay_2018}. Finally, every differential category with antiderivatives is a \textbf{calculus category} \cite[Definition 5.6]{cockett_lemay_2018} -- which axiomatizes the compatible relation between differentiation and integration via the Fundamental Theorems of Calculus. 

\section{Differential Linear Categories with Antiderivatives}\label{difflinsec}

In this section, we turn our attention to studying when a differential linear category has antiderivatives. In particular, we will prove Theorem \ref{antithm} which provides necessary and sufficient conditions for when a differential linear category has antiderivatives. Briefly, for a differential linear category to have integration and antiderivatives, it is sufficient to know how to integrate over the monoidal unit $R$ and also that the Second Fundamental Theorem of Calculus holds. This observation will greatly simplify showing that the differential linear categories of Sections \ref{polysec}, \ref{bisec}, and \ref{consec} have antiderivatives. 

It may be useful to first provide an outline of how we will obtain our desired result. While it is possible to provide a direct proof that $\mathsf{K}$ is a natural isomorphism, the direct calculation is somewhat tedious as it amounts simply too long strings of equations. Therefore, we will provide an alternative proof with smaller intermediate steps, which for the reader is hopefully more informative and enjoyable to read. We will start by observing that a differential linear category has antiderivatives if and only if $\mathsf{K}_R$ is an isomorphism (Proposition \ref{JKprop1}). Then we will assume that we have an integration map for the monoidal unit $\mathsf{s}_R$ which is compatible with $\mathsf{d}_R$ in that the Second Fundamental Theorem of Calculus holds (Definition \ref{f2def}). Our objective will then be to construct $\mathsf{K}^{-1}_R$ using $\mathsf{s}_R$. To do so, we will first show that $\mathsf{J}_R$ is an isomorphism (Lemma \ref{Jinv}). Then we will construct $\mathsf{K}^{-1}_R$ using $\mathsf{s}_R$, $\mathsf{d}_R$, and $\mathsf{J}^{-1}_R$ (Lemma \ref{Kinv}). From here, we will be able to easily prove Theorem \ref{antithm} and conclude that $\mathsf{K}$ is a natural isomorphism. As a consequence, we can construct $\mathsf{K}^{-1}$, $\mathsf{J}^{-1}$, and the antiderivative integral transformation $\mathsf{s}$ in terms of $\mathsf{s}_R$. 

We begin, as promised, with the observation that for a differential linear category, having antiderivatives is completely determined by the monoidal unit component of $\mathsf{K}$. 

\begin{proposition} \label{JKprop1} A differential linear category has antiderivatives if and only if for the monoidal unit $R$, $\mathsf{K}_R$ is an isomorphism. 
\end{proposition}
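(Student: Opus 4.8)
The statement is an ``if and only if'', and one direction is immediate: if the category has antiderivatives, then by Definition \ref{antidef} the natural transformation $\mathsf{K}$ is a natural isomorphism, so in particular its component $\mathsf{K}_R$ at the monoidal unit is an isomorphism. The plan is therefore to concentrate on the converse: assuming only that $\mathsf{K}_R$ is invertible, I would show that $\mathsf{K}_A$ is invertible for every object $A$, from which the naturality of the inverse (and hence that $\mathsf{K}$ is a natural isomorphism) follows automatically.

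The key input is Proposition \ref{KJm}(i) specialized to $B = R$. Since we work in a strict monoidal category we have $A \otimes R = A$ and $\oc A \otimes R = \oc A$, so $\mathsf{K}_{A \otimes R} = \mathsf{K}_A$, and the proposition reads
\[ (\mathsf{K}_A \otimes 1_{\oc R})\mathsf{m}_{A,R} = \mathsf{m}_{A,R}\mathsf{K}_A = (1_{\oc A} \otimes \mathsf{K}_R)\mathsf{m}_{A,R}. \]
Together with the unit identity $(1_{\oc A} \otimes \mathsf{m}_R)\mathsf{m}_{A,R} = 1_{\oc A}$ from Definition \ref{moncoalgmod}(i), these two equalities say that $\mathsf{m}_{A,R}$ intertwines $\mathsf{K}_A$ with the isomorphism $1_{\oc A} \otimes \mathsf{K}_R$, while $1_{\oc A} \otimes \mathsf{m}_R$ is a section of $\mathsf{m}_{A,R}$. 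This is exactly the data needed to transport invertibility of $\mathsf{K}_R$ along the retraction. Accordingly, I would propose the candidate inverse
\[ \mathsf{K}^{-1}_A := (1_{\oc A} \otimes \mathsf{m}_R)(1_{\oc A} \otimes \mathsf{K}^{-1}_R)\mathsf{m}_{A,R}. \]

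To verify this is a two-sided inverse I would carry out two short diagram chases (recall composition is in diagrammatic order). For $\mathsf{K}^{-1}_A\mathsf{K}_A$ I would push $\mathsf{K}_A$ across $\mathsf{m}_{A,R}$ using the equality $\mathsf{m}_{A,R}\mathsf{K}_A = (1_{\oc A} \otimes \mathsf{K}_R)\mathsf{m}_{A,R}$, cancel $\mathsf{K}^{-1}_R\mathsf{K}_R = 1_{\oc R}$, and collapse the result with the unit identity to obtain $1_{\oc A}$. For $\mathsf{K}_A\mathsf{K}^{-1}_A$ I would first use bifunctoriality of $\otimes$ to rewrite $\mathsf{K}_A(1_{\oc A} \otimes \mathsf{m}_R) = (1_{\oc A} \otimes \mathsf{m}_R)(\mathsf{K}_A \otimes 1_{\oc R})$, then commute $\mathsf{K}_A \otimes 1_{\oc R}$ past $1_{\oc A} \otimes \mathsf{K}^{-1}_R$ (they act on different tensor factors), apply the other equality $(\mathsf{K}_A \otimes 1_{\oc R})\mathsf{m}_{A,R} = (1_{\oc A} \otimes \mathsf{K}_R)\mathsf{m}_{A,R}$, and again cancel and collapse to $1_{\oc A}$.

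The one genuinely delicate point --- and the reason this argument is not merely ``conjugate $\mathsf{K}_A$ by an isomorphism'' --- is that $\mathsf{m}_{A,R}$ is only a split epimorphism, not an isomorphism, so $\mathsf{K}_A$ and $1_{\oc A} \otimes \mathsf{K}_R$ are not literally conjugate. The second verification is where this matters: there $\mathsf{K}_A$ initially sits on the wrong side of $\mathsf{m}_{A,R}$, and the only way to move it is to first absorb it into the $\oc A$-factor as $\mathsf{K}_A \otimes 1_{\oc R}$ and then invoke the left-hand equality of Proposition \ref{KJm}(i). Thus both equalities furnished by that proposition are essential, and the symmetric handling of the two sides is the crux of the proof. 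Once both identities are established, each $\mathsf{K}_A$ is an isomorphism, so $\mathsf{K}$ is a natural isomorphism and the category has antiderivatives.
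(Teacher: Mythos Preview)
Your proof is correct and follows essentially the same approach as the paper's: the paper defines the inverse as $(\mathsf{m}_R \otimes 1_{\oc A})(\mathsf{K}^{-1}_R \otimes 1_{\oc A})\mathsf{m}_{R,A}$ using the left unit identity, while you use the mirror-image $(1_{\oc A} \otimes \mathsf{m}_R)(1_{\oc A} \otimes \mathsf{K}^{-1}_R)\mathsf{m}_{A,R}$ with the right unit identity, and both verifications proceed by the same use of Proposition~\ref{KJm}(i) and Definition~\ref{moncoalgmod}(i). Your explicit discussion of the bifunctoriality step needed to handle $\mathsf{K}_A\mathsf{K}^{-1}_A$ is a nice touch that the paper leaves implicit.
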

\begin{proof} Suppose that $\mathsf{K}$ is a natural isomorphism. Then by definition, $\mathsf{K}_R$ is an isomorphism. Conversely, suppose that $\mathsf{K}_R$ is an isomorphism. Define $\mathsf{K}^{-1}_A: \oc A \to \oc A$ as follows: 
\[\mathsf{K}^{-1}_A := \xymatrixcolsep{5pc}\xymatrix{ \oc A \ar[r]^-{\mathsf{m}_R \otimes 1_{\oc A}} & \oc R \otimes \oc A \ar[r]^-{\mathsf{K}^{-1}_R \otimes 1_{\oc A}} & \oc R \otimes \oc A \ar[r]^-{\mathsf{m}_{R, A}} & \oc A } \]
Then we have that: 
\begin{align*}
\mathsf{K}^{-1}_A \mathsf{K}_A &=~ (\mathsf{m}_R \otimes 1_{\oc A} ) (\mathsf{K}^{-1}_R \otimes 1_{\oc A}) \mathsf{m}_{R, A} \mathsf{K}_A \\
&=~ (\mathsf{m}_R \otimes 1_{\oc A})(\mathsf{K}^{-1}_R \otimes 1_{\oc A}) (\mathsf{K}_R \otimes 1_{\oc A}) \mathsf{m}_{R, A} \tag{Prop \ref{KJm}.i} \\
&=~ (\mathsf{m}_R \otimes 1_{\oc A}) \mathsf{m}_{R, A} \\
&=~ 1_{\oc A} \tag{Def \ref{moncoalgmod}.i} \\ \\
\mathsf{K}_A \mathsf{K}^{-1}_A &=~ \mathsf{K}_A (\mathsf{m}_R \otimes 1_{\oc A} ) (\mathsf{K}^{-1}_R \otimes 1_{\oc A}) \mathsf{m}_{R, A} \\
&=~ (\mathsf{m}_R \otimes 1_{\oc A} ) (\mathsf{K}^{-1}_R \otimes 1_{\oc A}) (1_{\oc R} \otimes \mathsf{K}_A) \mathsf{m}_{R, A} \\
&=~ (\mathsf{m}_R \otimes 1_{\oc A})(\mathsf{K}^{-1}_R \otimes 1_{\oc A}) (\mathsf{K}_R \otimes 1_{\oc A}) \mathsf{m}_{R, A} \tag{Prop \ref{KJm}.i} \\
&=~ (\mathsf{m}_R \otimes 1_{\oc A}) \mathsf{m}_{R, A} \\
&=~ 1_{\oc A} \tag{Def \ref{moncoalgmod}.i} 
\end{align*}
So we conclude that $\mathsf{K}$ is a natural isomorphism. 
\end{proof}

Before we start working with integration, we consider the following useful observations about the monoidal unit components of $\mathsf{K}$ and $\mathsf{J}$: 

\begin{lemma}\label{KJcR} In a differential category, the following equalities hold for the monoidal unit $R$: 
\begin{enumerate}[{\em (i)}]
\item $ \mathsf{K}_R\mathsf{d}^\circ_R = \mathsf{d}^\circ_R \mathsf{J}_R$
\item $\mathsf{d}_R\mathsf{K}_R = \mathsf{J}_R\mathsf{d}_R$ 
\item $ \mathsf{d}_R \mathsf{d}^\circ_R = \mathsf{J}_R$ 
\end{enumerate}
\end{lemma}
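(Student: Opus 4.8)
The plan is to obtain all three identities by specializing general, previously-established identities to the monoidal unit $R$ and then simplifying with the strictness conventions (recall $A \otimes R = A = R \otimes A$), which collapse the spare tensor factor of $A$ appearing in the generic statements. In other words, none of the three equalities needs a fresh calculation; each is the monoidal-unit instance of an identity already recorded in Proposition \ref{KJp} or Proposition \ref{Wp}.

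For part (i), I would start from the identity $\mathsf{K}_A \mathsf{d}^\circ_A = \mathsf{d}^\circ_A(\mathsf{J}_A \otimes 1_A)$ of Proposition \ref{KJp} and set $A = R$. Since $R$ is the strict monoidal unit we have $\oc R \otimes R = \oc R$, hence $\mathsf{J}_R \otimes 1_R = \mathsf{J}_R$, and the right-hand side becomes $\mathsf{d}^\circ_R \mathsf{J}_R$, giving $\mathsf{K}_R \mathsf{d}^\circ_R = \mathsf{d}^\circ_R \mathsf{J}_R$. Part (ii) is entirely analogous: specializing $\mathsf{d}_A \mathsf{K}_A = (\mathsf{J}_A \otimes 1_A)\mathsf{d}_A$ from the same proposition to $A = R$ and again using $\mathsf{J}_R \otimes 1_R = \mathsf{J}_R$ yields $\mathsf{d}_R \mathsf{K}_R = \mathsf{J}_R \mathsf{d}_R$.

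For part (iii), I would specialize Proposition \ref{Wp}, namely $\mathsf{d}_A\mathsf{d}^\circ_A = (\mathsf{d}^\circ_A \otimes 1_A)(1_{\oc A} \otimes \sigma_{A,A})(\mathsf{d}_A \otimes 1_A) + (1_{\oc A} \otimes 1_A)$, to $A = R$. Here the simplification is slightly more substantial: beyond the collapse of the $\otimes R$ factors, I also need that the symmetry on the unit is trivial, $\sigma_{R,R} = 1_R$, which holds in any symmetric strict monoidal category. With these we get $\mathsf{d}^\circ_R \otimes 1_R = \mathsf{d}^\circ_R$, $\,1_{\oc R} \otimes \sigma_{R,R} = 1_{\oc R}$, and $\mathsf{d}_R \otimes 1_R = \mathsf{d}_R$, so the right-hand side reduces to $\mathsf{d}^\circ_R\mathsf{d}_R + 1_{\oc R}$, which is exactly $\mathsf{J}_R$ by its definition in Definition \ref{KJdef}. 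Hence $\mathsf{d}_R\mathsf{d}^\circ_R = \mathsf{J}_R$.

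There is no genuine obstacle here, as each claim is a one-line consequence of a general identity once the monoidal unit is substituted. The only point requiring care -- and the step I would flag -- is the bookkeeping around strictness: verifying that the extra tensor factors of $A = R$ and the symmetry $\sigma_{R,R}$ vanish so that the generic two- and three-fold composites degenerate to the stated composites on $\oc R$. This is precisely where restricting to the monoidal unit does all the real work, and it foreshadows why, in a differential \emph{linear} category, understanding antiderivatives reduces to understanding the component at $R$.
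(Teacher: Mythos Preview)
Your proposal is correct and follows essentially the same approach as the paper: the paper's proof simply observes that (i) and (ii) are the $A=R$ instances of Proposition~\ref{KJp} under the strictness convention, and that (iii) is the $A=R$ instance of Proposition~\ref{Wp} together with $\sigma_{R,R}=1_R$. Your write-up just spells out these specializations in slightly more detail.
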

\begin{proof} If the identities involving the monoidal unit look a bit off, recall that we are working in a \emph{strict} monoidal category and so $R \otimes R = R$. Therefore, $(i)$ and $(ii)$ are simply re-expressions of Proposition \ref{KJp} with the strict monoidal structure in mind. While $(iii)$ is re-expressing Proposition \ref{Wp}, using that $\sigma_{R,R} = 1_R$. 
\end{proof}

We turn our attention now to working with integration on the monoidal unit $R$. Integration will be captured by a map of type $\mathsf{s}_R: \oc R \to \oc R$ which is compatible with the deriving transformation $\mathsf{d}_R: \oc R \to \oc R$ in the sense that the Second Fundamental Theorem of Calculus as in Proposition \ref{FT2prop} holds. The idea here is that $\mathsf{s}_R$ is the antiderivative integral transformation (Definition \ref{ASdef}) at the monoidal unit. Once again, if the types of $\mathsf{s}_R$ and $\mathsf{d}_R$ look a bit off, recall that we are working in a \emph{strict} monoidal category and so $\oc R \otimes R = \oc R$. 

\begin{definition}\label{f2def} In a differential category, for the monoidal unit $R$, a map $\mathsf{s}_R: \oc R \to \oc R$ satisfies \textbf{[ftc.2]} if the following equality holds:
\begin{equation}\label{f2}\begin{gathered} \mathsf{s}_R \mathsf{d}_R + \oc(0) = 1_{\oc R} \end{gathered}\end{equation}
\end{definition}

\begin{lemma}\label{sJlem} In a differential category, if a map $\mathsf{s}_R: \oc R \to \oc R$ satisfies \textbf{[ftc.2]}, then the following equality holds: 
\begin{equation}\label{sJ}\begin{gathered} \mathsf{s}_R\mathsf{J}_R = \mathsf{d}^\circ_R \end{gathered}\end{equation}
\end{lemma}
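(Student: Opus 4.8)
The plan is to read off the statement as a one-line consequence of post-composing the hypothesis \eqref{f2} with the coderiving transformation $\mathsf{d}^\circ_R$, and then recognizing the two resulting summands via Lemma \ref{KJcR} and the additive structure. The key bridge is Lemma \ref{KJcR}(iii), which says $\mathsf{d}_R \mathsf{d}^\circ_R = \mathsf{J}_R$; this is exactly what lets a $\mathsf{s}_R \mathsf{d}_R$ pattern (coming from \textbf{[ftc.2]}) turn into the $\mathsf{s}_R \mathsf{J}_R$ we want to compute.

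Concretely, first I would take the defining equation of \textbf{[ftc.2]}, namely $\mathsf{s}_R \mathsf{d}_R + \oc(0) = 1_{\oc R}$, and compose it on the right with $\mathsf{d}^\circ_R$. Since composition distributes over the additive enrichment (Definition \ref{addcatdef}), this yields $\mathsf{s}_R \mathsf{d}_R \mathsf{d}^\circ_R + \oc(0)\mathsf{d}^\circ_R = \mathsf{d}^\circ_R$. Next I would rewrite the first summand: by associativity and Lemma \ref{KJcR}(iii) we have $\mathsf{s}_R \mathsf{d}_R \mathsf{d}^\circ_R = \mathsf{s}_R(\mathsf{d}_R \mathsf{d}^\circ_R) = \mathsf{s}_R \mathsf{J}_R$. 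Hence $\mathsf{s}_R \mathsf{J}_R + \oc(0)\mathsf{d}^\circ_R = \mathsf{d}^\circ_R$, and the proof is finished as soon as the second summand is shown to be zero.

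The only real obstacle is therefore establishing $\oc(0)\mathsf{d}^\circ_R = 0$. I would get this from naturality of $\mathsf{d}^\circ$ (Definition \ref{dcircdef}) applied to the zero endomorphism $0 \colon R \to R$: naturality gives $\oc(0)\mathsf{d}^\circ_R = \mathsf{d}^\circ_R(\oc(0)\otimes 0)$, and since the tensor product is compatible with the additive structure (Definition \ref{addcatdef}), the presence of a zero map in the second tensor factor forces $\oc(0)\otimes 0 = 0$, so the whole composite vanishes. Substituting $\oc(0)\mathsf{d}^\circ_R = 0$ into the previous identity leaves $\mathsf{s}_R \mathsf{J}_R = \mathsf{d}^\circ_R$, which is \eqref{sJ}. (Alternatively, one could derive $\oc(0)\mathsf{d}^\circ_R=0$ directly from the unwound definition $\mathsf{d}^\circ_R = \Delta_R(1_{\oc R}\otimes\varepsilon_R)$ together with naturality of $\Delta$ and $\varepsilon$, but the naturality-of-$\mathsf{d}^\circ$ route is shorter.)
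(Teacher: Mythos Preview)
Your proof is correct and uses exactly the same ingredients as the paper: Lemma \ref{KJcR}(iii) to identify $\mathsf{d}_R\mathsf{d}^\circ_R = \mathsf{J}_R$, naturality of $\mathsf{d}^\circ$ applied to $0\colon R\to R$ to see that $\oc(0)\mathsf{d}^\circ_R = \mathsf{d}^\circ_R(\oc(0)\otimes 0) = 0$, and \textbf{[ftc.2]}. The only cosmetic difference is the direction of the computation: the paper starts from $\mathsf{s}_R\mathsf{J}_R$, rewrites it as $\mathsf{s}_R\mathsf{d}_R\mathsf{d}^\circ_R$, adds $0 = \oc(0)\mathsf{d}^\circ_R$, and factors to $(\mathsf{s}_R\mathsf{d}_R + \oc(0))\mathsf{d}^\circ_R = \mathsf{d}^\circ_R$, whereas you post-compose \textbf{[ftc.2]} with $\mathsf{d}^\circ_R$ and simplify; the two arguments are the same up to reordering.
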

\begin{proof} We prove the identity by the following calculation: 
\begin{align*}
\mathsf{s}_R \mathsf{J}_R &=~ \mathsf{s}_R \mathsf{d}_R \mathsf{d}^\circ_R \tag{Lem. \ref{KJcR}.iii} \\
&=~ \mathsf{s}_R \mathsf{d}_R \mathsf{d}^\circ_R + 0 \\
&=~ \mathsf{s}_R \mathsf{d}_R \mathsf{d}^\circ_R + \mathsf{d}^\circ_R (\oc(0) \otimes 0) \\ 
&=~ \mathsf{s}_R \mathsf{d}_R \mathsf{d}^\circ_R + \oc(0)\mathsf{d}^\circ_R \tag{Nat. $\mathsf{d}^\circ$}\\
&=~ (\mathsf{s}_R \mathsf{d}_R + \oc(0) )\mathsf{d}^\circ_R \\
&=~ \mathsf{d}^\circ_R \tag{Def. \ref{f2def}} 
\end{align*}
\end{proof} 

\begin{corollary}\label{cor500} In a differential category such that $\mathsf{J}_R$ is an isomorphism, define the map ${\mathsf{s}_R: \oc R \to \oc R}$ as follows: 
\[\mathsf{s}_R := \xymatrixcolsep{5pc}\xymatrix{ \oc R \ar[r]^-{\mathsf{d}^\circ_R} & \oc R \ar[r]^-{\mathsf{J}^{-1}_R} & \oc R}\] 
If $\mathsf{s}_R$ satisfies \textbf{[ftc.2]} then it is the unique map which satisfies \textbf{[ftc.2]}. 
\end{corollary}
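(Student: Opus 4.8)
The plan is to establish uniqueness by showing that \emph{any} map satisfying \textbf{[ftc.2]} must coincide with the specified composite $\mathsf{d}^\circ_R \mathsf{J}^{-1}_R$. The essential ingredient is Lemma \ref{sJlem}, which was proved for an arbitrary map satisfying \textbf{[ftc.2]}: any map $\mathsf{t}_R: \oc R \to \oc R$ satisfying \textbf{[ftc.2]} necessarily satisfies $\mathsf{t}_R \mathsf{J}_R = \mathsf{d}^\circ_R$. Since that lemma is stated at this level of generality, it applies not only to the defined $\mathsf{s}_R$ but to any competing solution.

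First I would take an arbitrary map $\mathsf{t}_R: \oc R \to \oc R$ satisfying \textbf{[ftc.2]} and apply Lemma \ref{sJlem} to it, obtaining $\mathsf{t}_R \mathsf{J}_R = \mathsf{d}^\circ_R$. Since $\mathsf{J}_R$ is assumed to be an isomorphism, I would then postcompose both sides with $\mathsf{J}^{-1}_R$, cancelling $\mathsf{J}_R$ on the left-hand side and yielding $\mathsf{t}_R = \mathsf{d}^\circ_R \mathsf{J}^{-1}_R$, which is exactly the definition of $\mathsf{s}_R$. Hence $\mathsf{t}_R = \mathsf{s}_R$.

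As $\mathsf{t}_R$ was arbitrary among maps satisfying \textbf{[ftc.2]}, this shows $\mathsf{s}_R$ is the only possible such map; the hypothesis that $\mathsf{s}_R$ itself satisfies \textbf{[ftc.2]} then confirms that this unique candidate is genuinely a solution. I expect no real obstacle here, as the whole argument reduces to invoking Lemma \ref{sJlem} and cancelling the invertible $\mathsf{J}_R$. The only point requiring slight care is to read Lemma \ref{sJlem} as a statement valid for \emph{every} map satisfying \textbf{[ftc.2]}, which is precisely what licenses applying it to the arbitrary competitor $\mathsf{t}_R$ rather than just to $\mathsf{s}_R$.
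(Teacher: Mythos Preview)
Your proposal is correct and follows essentially the same approach as the paper: take an arbitrary competitor satisfying \textbf{[ftc.2]}, apply Lemma~\ref{sJlem} to obtain $\mathsf{t}_R \mathsf{J}_R = \mathsf{d}^\circ_R$, and then cancel the invertible $\mathsf{J}_R$ to conclude $\mathsf{t}_R = \mathsf{d}^\circ_R \mathsf{J}^{-1}_R = \mathsf{s}_R$. The paper writes this cancellation as $\mathsf{s}^\prime_R = \mathsf{s}^\prime_R \mathsf{J}_R \mathsf{J}^{-1}_R = \mathsf{d}^\circ_R \mathsf{J}^{-1}_R$, which is exactly your argument.
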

\begin{proof} Suppose that $\mathsf{s}_R$ satisfies \textbf{[ftc.2]} and that there is another map $\mathsf{s}^\prime_R$ which satisfies \textbf{[ftc.2]}. Then we have that: 
\begin{align*} \mathsf{s}^\prime_R &=~ \mathsf{s}^\prime_R \mathsf{J}_R\mathsf{J}^{-1}_R \\
&=~ \mathsf{d}^\circ_R \mathsf{J}^{-1}_R \tag{Lem \ref{sJlem}} \\
&=~ \mathsf{s}_R
\end{align*}
So we conclude that $\mathsf{s}_R$ is the unique map which satisfies \textbf{[ftc.2]}. 
\end{proof}

Note that in the setting of the above corollary, uniqueness justifies the use of $\mathsf{s}_R$ as appropriate notation. It is important to note that $\mathsf{J}_R$ being an isomorphism does not necessarily imply that $\mathsf{d}^\circ_R\mathsf{J}^{-1}_R$ satisfies \textbf{[ftc.2]}. Next, we show that in the case of a differential linear category, having such an $\mathsf{s}_R$ implies that $\mathsf{J}_R$ is an isomorphism. 

\begin{lemma}\label{Jinv} In a differential linear category, if a map $\mathsf{s}_R: \oc R \to \oc R$ satisfies \textbf{[ftc.2]}, then $\mathsf{J}_R$ is an isomorphism with inverse $\mathsf{J}^{-1}_R$ defined as follows: 
\[\mathsf{J}^{-1}_R := \xymatrixcolsep{5pc}\xymatrix{ \oc R \ar[r]^-{\mathsf{m}_R \otimes 1_{\oc R}} & \oc R \otimes \oc R \ar[r]^-{\mathsf{s}_R \otimes 1_{\oc R}} & \oc R \otimes \oc R \ar[r]^-{\mathsf{m}_{R, R}} & \oc R } \]
and furthermore the following equality holds:
\begin{equation}\label{sJ2}\begin{gathered} \mathsf{s}_R = \mathsf{d}^\circ_R\mathsf{J}^{-1}_R \end{gathered}\end{equation} 
\end{lemma}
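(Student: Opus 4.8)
The plan is to verify directly that the proposed map is a two-sided inverse of $\mathsf{J}_R$, using the same monoidal ``conjugation'' technique already employed in the proof of Proposition \ref{JKprop1} for $\mathsf{K}$. The only new ingredients needed are Lemma \ref{sJlem} (which supplies $\mathsf{s}_R \mathsf{J}_R = \mathsf{d}^\circ_R$), the monoidal compatibility of $\mathsf{J}$ from Proposition \ref{KJm}.ii, the identity $\mathsf{m}_R \mathsf{d}^\circ_R = \mathsf{m}_R$ from Proposition \ref{dcircm}, and the unit law $(\mathsf{m}_R \otimes 1_{\oc R})\mathsf{m}_{R,R} = 1_{\oc R}$ from Definition \ref{moncoalgmod}.i, together with bifunctoriality of $\otimes$.

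First I would compute $\mathsf{J}^{-1}_R \mathsf{J}_R$. Expanding the definition of $\mathsf{J}^{-1}_R$, the trailing factor is $\mathsf{m}_{R,R}\mathsf{J}_R$; by Proposition \ref{KJm}.ii (taken at $A = B = R$, where $R \otimes R = R$) this rewrites as $(\mathsf{J}_R \otimes 1_{\oc R})\mathsf{m}_{R,R}$, which pulls $\mathsf{J}_R$ into the first tensor factor so that it meets $\mathsf{s}_R$. The resulting composite $\mathsf{s}_R \mathsf{J}_R$ becomes $\mathsf{d}^\circ_R$ by Lemma \ref{sJlem}, leaving $(\mathsf{m}_R \otimes 1_{\oc R})(\mathsf{d}^\circ_R \otimes 1_{\oc R})\mathsf{m}_{R,R}$. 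This then simplifies using $\mathsf{m}_R \mathsf{d}^\circ_R = \mathsf{m}_R$ (Proposition \ref{dcircm}) followed by the unit law $(\mathsf{m}_R \otimes 1_{\oc R})\mathsf{m}_{R,R} = 1_{\oc R}$ (Definition \ref{moncoalgmod}.i), yielding $1_{\oc R}$.

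For the opposite composite $\mathsf{J}_R \mathsf{J}^{-1}_R$, I would instead slide the leading $\mathsf{J}_R$ rightward. Since $\oc R = R \otimes \oc R$ strictly, $\mathsf{J}_R$ acts on the second factor and, by bifunctoriality, commutes past $\mathsf{m}_R \otimes 1_{\oc R}$ and then past $\mathsf{s}_R \otimes 1_{\oc R}$, landing as $(1_{\oc R} \otimes \mathsf{J}_R)\mathsf{m}_{R,R}$. Applying the other half of Proposition \ref{KJm}.ii, namely $(1_{\oc R} \otimes \mathsf{J}_R)\mathsf{m}_{R,R} = \mathsf{m}_{R,R}\mathsf{J}_R$, collapses the whole expression back to $\mathsf{J}^{-1}_R \mathsf{J}_R$, already shown to be $1_{\oc R}$. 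Hence $\mathsf{J}^{-1}_R$ is a genuine two-sided inverse. The displayed identity $\mathsf{s}_R = \mathsf{d}^\circ_R\mathsf{J}^{-1}_R$ then follows at once, since Lemma \ref{sJlem} gives $\mathsf{s}_R = \mathsf{s}_R \mathsf{J}_R \mathsf{J}^{-1}_R = \mathsf{d}^\circ_R \mathsf{J}^{-1}_R$.

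There is no analytic difficulty here, as the entire argument lives at the monoidal unit and is purely formal. The part demanding the most care is the strict-monoidal bookkeeping: keeping the identifications $R \otimes R = R$ and $\oc R \otimes R = \oc R$ straight so that the types of $\mathsf{s}_R$, $\mathsf{d}^\circ_R$, and the two instances of Proposition \ref{KJm}.ii match up, and in particular using the $(\mathsf{J}_R \otimes 1_{\oc R})$-form in the first computation and the $(1_{\oc R} \otimes \mathsf{J}_R)$-form in the second.
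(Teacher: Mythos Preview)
Your proposal is correct and follows essentially the same route as the paper: both compute $\mathsf{J}^{-1}_R\mathsf{J}_R$ by pulling $\mathsf{J}_R$ through $\mathsf{m}_{R,R}$ via Proposition~\ref{KJm}.ii, applying $\mathsf{s}_R\mathsf{J}_R=\mathsf{d}^\circ_R$ from Lemma~\ref{sJlem}, then collapsing with $\mathsf{m}_R\mathsf{d}^\circ_R=\mathsf{m}_R$ and the unit law. The only cosmetic difference is that for $\mathsf{J}_R\mathsf{J}^{-1}_R$ the paper moves $\mathsf{J}_R$ across to the \emph{first} tensor factor (via the equality $(1_{\oc R}\otimes\mathsf{J}_R)\mathsf{m}_{R,R}=(\mathsf{J}_R\otimes 1_{\oc R})\mathsf{m}_{R,R}$) and reapplies Lemma~\ref{sJlem} directly, whereas you instead use $(1_{\oc R}\otimes\mathsf{J}_R)\mathsf{m}_{R,R}=\mathsf{m}_{R,R}\mathsf{J}_R$ to reduce to the already-established identity $\mathsf{J}^{-1}_R\mathsf{J}_R=1_{\oc R}$; both are immediate consequences of the same proposition.
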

\begin{proof} We prove that $\mathsf{J}^{-1}_R$ is the inverse of $\mathsf{J}_R$ by the following calculations: 
\begin{align*}
\mathsf{J}^{-1}_R \mathsf{J}_R &=~ (\mathsf{m}_R \otimes 1_{\oc R}) (\mathsf{s}_R \otimes 1_{\oc R}) \mathsf{m}_{R, R} \mathsf{J}_R \\
&=~ (\mathsf{m}_R \otimes 1_{\oc R}) (\mathsf{s}_R \otimes 1_{\oc R})(\mathsf{J}_R \otimes 1_{\oc R}) \mathsf{m}_{R, R} \tag{Prop \ref{KJm}.i} \\
&=~ (\mathsf{m}_R \otimes 1_{\oc R}) (\mathsf{d}^\circ_R \otimes 1_{\oc R}) \mathsf{m}_{R, R} \tag{Prop \ref{sJlem}} \\
&=~ (\mathsf{m}_R \otimes 1_{\oc R}) \mathsf{m}_{R, R} \tag{Prop \ref{dcircm}} \\
&=~ 1_{\oc R} \tag{Def \ref{moncoalgmod}.i} \\ \\
\mathsf{J}_R \mathsf{J}^{-1}_R &=~ \mathsf{J}_R (\mathsf{m}_R \otimes 1_{\oc R}) (\mathsf{s}_R \otimes 1_{\oc R}) \mathsf{m}_{R, R} \\
&=~ (\mathsf{m}_R \otimes 1_{\oc R}) (\mathsf{s}_R \otimes 1_{\oc R}) (1_{\oc R} \otimes \mathsf{J}_R) \mathsf{m}_{R, R} \\
&=~ (\mathsf{m}_R \otimes 1_{\oc R}) (\mathsf{d}^\circ_R \otimes 1_{\oc R}) \mathsf{m}_{R, R} \tag{Prop \ref{sJlem}} \\
&=~ (\mathsf{m}_R \otimes 1_{\oc R}) \mathsf{m}_{R, R} \tag{Prop \ref{dcircm}} \\
&=~ 1_{\oc R} \tag{Def \ref{moncoalgmod}.i} 
\end{align*}
So we conclude that $\mathsf{J}_R$ is an isomorphism. By Corollary \ref{cor500}, this implies that $\mathsf{s}_R = \mathsf{d}^\circ_R\mathsf{J}^{-1}_R$. 
\end{proof} 

It is worth pointing out that the formula for $\mathsf{J}^{-1}_R$ in Lemma \ref{Jinv} does not come from out of the blue. Indeed, the construction of $\mathsf{J}^{-1}_R$ is a specialization of the construction of $\mathsf{J}^{-1}$ from an integral transformation $\mathsf{s}$ as found in \cite[Theorem 3]{cockett_lemay_2018}. We are now in a position to construct $\mathsf{K}^{-1}_R$ using using $\mathsf{s}_R$, $\mathsf{d}_R$, and $\mathsf{J}^{-1}_R$. Once again, it is worth pointing out that the construction of $\mathsf{K}^{-1}_R$ in the lemma below is not as random as it appears. The construction of $\mathsf{K}^{-1}_R$ is a re-expression of the construction of $\mathsf{K}^{-1}$ using $\mathsf{J}^{-1}$ as found in \cite[Proposition 20]{cockett_lemay_2018}. 

\begin{lemma}\label{Kinv} In a differential linear category, if a map $\mathsf{s}_R: \oc R \to \oc R$ satisfies \textbf{[ftc.2]}, then $\mathsf{K}_R$ is an isomorphism with inverse $\mathsf{K}^{-1}_R$ defined as follows: 
 \[ \mathsf{K}^{-1}_R := \left( \xymatrixcolsep{3.65pc} \xymatrixrowsep{0.75pc}\xymatrix{\oc R \ar[r]^-{\mathsf{s}_R} & \oc R \ar[r]^-{\mathsf{J}^{-1}_R} & \oc R \ar[r]^-{\mathsf{d}_{R}} & \oc R } \right)+ 
\xymatrixcolsep{3.65pc} \left(\xymatrix{\oc R \ar[r]^-{\oc(0)} & \oc R }\right)\] 
and furthermore the following equality holds:
\begin{equation}\label{sJ3}\begin{gathered} \mathsf{s}_R = \mathsf{K}^{-1}_R\mathsf{d}^\circ_R \end{gathered}\end{equation} 
\end{lemma}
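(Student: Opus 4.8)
The plan is to verify directly that the proposed formula $\mathsf{K}^{-1}_R = \mathsf{s}_R\mathsf{J}^{-1}_R\mathsf{d}_R + \oc(0)$ is a two-sided inverse of $\mathsf{K}_R$, and then to obtain the identity $\mathsf{s}_R = \mathsf{K}^{-1}_R\mathsf{d}^\circ_R$ as a short byproduct. All the ingredients are already available: Lemma \ref{Jinv} guarantees that $\mathsf{J}_R$ is invertible and, crucially, that $\mathsf{s}_R = \mathsf{d}^\circ_R\mathsf{J}^{-1}_R$; Lemma \ref{KJcR} supplies the intertwining relations $\mathsf{K}_R\mathsf{d}^\circ_R = \mathsf{d}^\circ_R\mathsf{J}_R$, $\mathsf{d}_R\mathsf{K}_R = \mathsf{J}_R\mathsf{d}_R$, and $\mathsf{d}_R\mathsf{d}^\circ_R = \mathsf{J}_R$; Proposition \ref{KJp} gives $\mathsf{K}_R\oc(0) = \oc(0) = \oc(0)\mathsf{K}_R$; and the standing hypothesis is \textbf{[ftc.2]}, namely $\mathsf{s}_R\mathsf{d}_R + \oc(0) = 1_{\oc R}$ (Definition \ref{f2def}).

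For the composite $\mathsf{K}^{-1}_R\mathsf{K}_R$, I would distribute $\mathsf{K}_R$ on the right over the sum, push it through the rightmost factor using $\mathsf{d}_R\mathsf{K}_R = \mathsf{J}_R\mathsf{d}_R$ (Lemma \ref{KJcR}(ii)), and absorb it into the constant term via $\oc(0)\mathsf{K}_R = \oc(0)$ (Proposition \ref{KJp}). The inner $\mathsf{J}^{-1}_R\mathsf{J}_R$ collapses to the identity, leaving exactly $\mathsf{s}_R\mathsf{d}_R + \oc(0)$, which equals $1_{\oc R}$ by \textbf{[ftc.2]}.

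For the composite $\mathsf{K}_R\mathsf{K}^{-1}_R$, I would distribute $\mathsf{K}_R$ on the left, absorb it into the constant term again via $\mathsf{K}_R\oc(0) = \oc(0)$, and then simplify the surviving summand. Substituting $\mathsf{s}_R = \mathsf{d}^\circ_R\mathsf{J}^{-1}_R$ from Lemma \ref{Jinv} and applying $\mathsf{K}_R\mathsf{d}^\circ_R = \mathsf{d}^\circ_R\mathsf{J}_R$ (Lemma \ref{KJcR}(i)), the prefix $\mathsf{K}_R\mathsf{d}^\circ_R\mathsf{J}^{-1}_R$ collapses to $\mathsf{d}^\circ_R$; reassembling, the composite becomes $\mathsf{d}^\circ_R\mathsf{J}^{-1}_R\mathsf{d}_R + \oc(0) = \mathsf{s}_R\mathsf{d}_R + \oc(0)$, which is once more $1_{\oc R}$ by \textbf{[ftc.2]}. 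This establishes that $\mathsf{K}_R$ is an isomorphism with the stated inverse.

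Finally, for the equation $\mathsf{s}_R = \mathsf{K}^{-1}_R\mathsf{d}^\circ_R$, I would postcompose the formula for $\mathsf{K}^{-1}_R$ with $\mathsf{d}^\circ_R$ and treat the two summands separately: the main term uses $\mathsf{d}_R\mathsf{d}^\circ_R = \mathsf{J}_R$ (Lemma \ref{KJcR}(iii)), so that $\mathsf{s}_R\mathsf{J}^{-1}_R\mathsf{d}_R\mathsf{d}^\circ_R = \mathsf{s}_R\mathsf{J}^{-1}_R\mathsf{J}_R = \mathsf{s}_R$, while the constant term vanishes because $\oc(0)\mathsf{d}^\circ_R = 0$ by naturality of $\mathsf{d}^\circ$ (the same observation used in the proof of Lemma \ref{sJlem}, since $\oc(0)\otimes 0 = 0$ in an additive symmetric monoidal category). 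I do not anticipate a genuine obstacle here: once Lemma \ref{Jinv} has provided both the existence of $\mathsf{J}^{-1}_R$ and the identity $\mathsf{s}_R = \mathsf{d}^\circ_R\mathsf{J}^{-1}_R$, every line is a one-step rewrite. The only points demanding care are bookkeeping the correct direction in which each intertwining relation of Lemma \ref{KJcR} is applied, and remembering that $\oc(0)$ is absorbed by $\mathsf{K}_R$ on either side.
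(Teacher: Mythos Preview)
Your proposal is correct and follows essentially the same route as the paper: both directions of the inverse check use the same intertwining relations from Lemma \ref{KJcR}, the absorption $\mathsf{K}_R\oc(0)=\oc(0)=\oc(0)\mathsf{K}_R$ from Proposition \ref{KJp}, the substitution $\mathsf{s}_R=\mathsf{d}^\circ_R\mathsf{J}^{-1}_R$ from Lemma \ref{Jinv}, and \textbf{[ftc.2]} to finish. The only cosmetic difference is in the last identity: the paper derives $\mathsf{s}_R=\mathsf{K}^{-1}_R\mathsf{d}^\circ_R$ by inserting $\mathsf{K}^{-1}_R\mathsf{K}_R$ in front of $\mathsf{d}^\circ_R\mathsf{J}^{-1}_R$ and using $\mathsf{K}_R\mathsf{d}^\circ_R=\mathsf{d}^\circ_R\mathsf{J}_R$, whereas you compute $\mathsf{K}^{-1}_R\mathsf{d}^\circ_R$ directly using $\mathsf{d}_R\mathsf{d}^\circ_R=\mathsf{J}_R$ and $\oc(0)\mathsf{d}^\circ_R=0$; both are one-line rewrites.
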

\begin{proof} By Lemma \ref{Jinv}, we know that $\mathsf{J}_R$ is an isomorphism and that $\mathsf{s}_R = \mathsf{d}^\circ_R\mathsf{J}^{-1}_R$. We prove that $\mathsf{K}^{-1}_R$ is the inverse of $\mathsf{K}_R$ by the following calculations: 
\begin{align*}
\mathsf{K}^{-1}_R \mathsf{K}_R &=~ \left(\mathsf{s}_R\mathsf{J}^{-1}_R\mathsf{d}_R + \oc(0) \right) \mathsf{K}_R \\ 
&=~ \mathsf{s}_R\mathsf{J}^{-1}_R\mathsf{d}_R\mathsf{K}_R + \oc(0) \mathsf{K}_R \\
&=~ \mathsf{s}_R\mathsf{J}^{-1}_R\mathsf{J}_R\mathsf{d}_R + \oc(0) \tag{Lem \ref{KJCR}.ii + Prop \ref{KJp}} \\
&=~ \mathsf{s}_R\mathsf{d}_R + \oc(0) \\
&=~ 1_{\oc R} \tag{Def. \ref{f2def}} \\ \\
 \mathsf{K}_R\mathsf{K}^{-1}_R &=~ \mathsf{K}_R \left(\mathsf{s}_R\mathsf{J}^{-1}_R\mathsf{d}_R + \oc(0) \right) \\
 &=~ \mathsf{K}_R \mathsf{s}_R\mathsf{J}^{-1}_R\mathsf{d}_R + \mathsf{K}_R\oc(0) \\
 &=~ \mathsf{K}_R\mathsf{d}^\circ_R\mathsf{J}^{-1}_R \mathsf{J}^{-1}_R\mathsf{d}_R + \oc(0) \tag{Lem \ref{Jinv} + Prop \ref{KJp}} \\
 &=~ \mathsf{d}^\circ_R\mathsf{J}_R\mathsf{J}^{-1}_R \mathsf{J}^{-1}_R\mathsf{d}_R + \oc(0) \tag{Lem \ref{KJCR}.i} \\
 &=~ \mathsf{d}^\circ_R \mathsf{J}^{-1}_R\mathsf{d}_R + \oc(0) \\
&=~ \mathsf{s}_R\mathsf{d}_R + \oc(0) \tag{Lem \ref{Jinv}} \\
&=~ 1_{\oc R} \tag{Def. \ref{f2def}}
\end{align*}
So we conclude that $\mathsf{K}_R$ is an isomorphism. We compute the other identity as follows:
\begin{align*} \mathsf{s}_R &=~ \mathsf{d}^\circ_R \mathsf{J}^{-1}_R \tag{Lem \ref{Jinv}} \\
&=~ \mathsf{K}^{-1}_R \mathsf{K}_R \mathsf{d}^\circ_R \mathsf{J}^{-1}_R \\
&=~ \mathsf{K}^{-1}_R \mathsf{d}^\circ_R \mathsf{J}_R\mathsf{J}^{-1}_R\tag{Lem \ref{KJcR}.i} \\
&=~ \mathsf{K}^{-1}_R\mathsf{d}^\circ_R 
\end{align*}
\end{proof} 

We may now easily prove the main technical result of this paper.

\begin{theorem}\label{antithm} A differential linear category has antiderivatives if and only if for the monoidal unit $R$ there is a map $\mathsf{s}_R: \oc R \to \oc R$ which satisfies \textbf{[ftc.2]}. \end{theorem}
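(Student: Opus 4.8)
The plan is to prove the biconditional by assembling the lemmas already established in this section, handling each implication separately; in both cases almost all of the work has already been done.

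For the forward (``only if'') direction, I would assume the category has antiderivatives, so that $\mathsf{K}$ is a natural isomorphism by Definition \ref{antidef}. The map to exhibit is simply the monoidal unit component of the antiderivative integral transformation of Definition \ref{ASdef}, namely $\mathsf{s}_R := \mathsf{K}^{-1}_R \mathsf{d}^\circ_R: \oc R \to \oc R \otimes R = \oc R$, where the final identification comes from strictness. To verify \textbf{[ftc.2]} I would invoke Proposition \ref{FT2prop}(ii) at the object $R$: the general Second Fundamental Theorem of Calculus $\mathsf{s}_A \mathsf{d}_A + \oc(0) = 1$ specializes to $\mathsf{s}_R \mathsf{d}_R + \oc(0) = 1_{\oc R}$, which is exactly \textbf{[ftc.2]}.

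For the backward (``if'') direction, I would assume a map $\mathsf{s}_R$ satisfying \textbf{[ftc.2]} is given. Lemma \ref{Kinv} then yields at once that $\mathsf{K}_R$ is an isomorphism, via the explicit inverse it constructs from $\mathsf{s}_R$, $\mathsf{d}_R$, and $\mathsf{J}^{-1}_R$. To finish I would apply Proposition \ref{JKprop1}, which reduces having antiderivatives to invertibility of the single component $\mathsf{K}_R$, concluding that $\mathsf{K}$ is a natural isomorphism and hence that the category has antiderivatives.

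Since both directions are direct corollaries of results already proved, no substantive obstacle remains. The only points needing care are the type bookkeeping forced by strictness --- so that $\mathsf{s}_R$ and $\mathsf{d}_R$ genuinely have type $\oc R \to \oc R$ and Proposition \ref{FT2prop}(ii) specializes cleanly --- and the observation, in the forward direction, that the $\mathsf{s}_R$ one extracts is precisely the $R$-component of the global antiderivative integral transformation $\mathsf{s}$, so that the two uses of the notation $\mathsf{s}_R$ coincide.
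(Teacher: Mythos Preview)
Your proposal is correct and follows essentially the same route as the paper: the forward direction takes $\mathsf{s}_R$ to be the $R$-component of the antiderivative integral transformation and invokes the Second Fundamental Theorem of Calculus from Proposition~\ref{FT2prop}, while the backward direction chains Lemma~\ref{Kinv} with Proposition~\ref{JKprop1}. In fact your citation of Proposition~\ref{FT2prop}(ii) is the right one; the paper's text contains a minor misreference at that point.
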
 
\begin{proof} Suppose that $\mathsf{K}$ is a natural isomorphism. Consider the component of the antiderivative integral transformation (Definition \ref{ASdef}) $\mathsf{s} = \mathsf{K} \mathsf{d}^\circ$ at the monoidal unit $\mathsf{s}_R: \oc R \to \oc R$. By Proposition \ref{FT2prop}.$(iii)$, $\mathsf{s}_R$ satisfies \textbf{[ftc.2]}. Conversely, suppose that we have a map $\mathsf{s}_R: \oc R \to \oc R$ which satisfies \textbf{[ftc.2]}. Then by Lemma \ref{Kinv}, it follows that $\mathsf{K}_R$ is an isomorphism. And therefore by Proposition \ref{JKprop1} we conclude that $\mathsf{K}$ is a natural isomorphism. 
\end{proof}

At this point, it may be worth briefly discussing the practical differences between Theorem \ref{antithm} and Proposition \ref{JKprop1}. From a purely computational point of view, when working with $\mathsf{s}_R$ one needs to only check one identity (i.e. $\mathsf{s}_R\mathsf{d}_R + \oc(0) = 1_{\oc R}$) compared to the two identities one needs to check for $\mathsf{K}^{-1}_R$ (i.e. $\mathsf{K}^{-1}_R \mathsf{K}_R = 1_{\oc R}$ and $ \mathsf{K}_R\mathsf{K}^{-1}_R=1_{\oc R}$). In practice, working with and computing $\mathsf{K}$ and $\mathsf{J}$ and their inverses may not be simple or obvious, see for example the complex formula for $\mathsf{K}^{-1}$ for real smooth functions in \cite[Proposition 6.1]{cruttwell2019integral}. On the other hand, working with $\mathsf{s}$ is much more intuitive as it is, in general, the expected line integration operator. In particular, integration is even simpler for the monoidal unit as it amounts to integration in one variable which is very well behaved and easy to work with.  

We conclude this section by expressing $\mathsf{K}^{-1}$, $\mathsf{J}^{-1}$, and $\mathsf{s}$ in terms of $\mathsf{s}_R$.  

\begin{corollary}\label{KJcor} In a differential linear category with antiderivatives, the following equalities holds: 
\begin{enumerate}[{\em (i)}]
\item $\mathsf{K}_A^{-1}= (\mathsf{m}_R \otimes \mathsf{m}_R \otimes 1_{\oc A})(\mathsf{s}_R \otimes \mathsf{s}_R \otimes 1_{\oc A})(\mathsf{m}_{R,R} \otimes 1_{\oc A})(\mathsf{d}_R \otimes 1_{\oc A})\mathsf{m}_{R,A} + \oc(0)$
 \[ \mathsf{K}^{-1}_A := \left( \xymatrixcolsep{3pc} \xymatrixrowsep{0.75pc}\xymatrix{\oc A \ar[r]^-{\mathsf{m}_R \otimes \mathsf{m}_R \otimes 1_{\oc A}} & \oc R \otimes \oc R \otimes \oc A \ar[r]^-{\mathsf{s}_R \otimes \mathsf{s}_R \otimes 1_{\oc A}} & \\
 \oc R \otimes \oc R \otimes \oc A \ar[r]^-{\mathsf{m}_{R,R} \otimes 1_{\oc A}} & \oc R \otimes \oc A \ar[r]^-{\mathsf{d}_R \otimes 1_{\oc A}} & \oc R \otimes \oc A \ar[r]^-{\mathsf{m}_{R,A}} & \oc A } \right)+ 
\xymatrixcolsep{2.5pc} \left(\xymatrix{\oc A \ar[r]^-{\oc(0)} & \oc A }\right)\] 
\item $\mathsf{J}^{-1}_A = (\mathsf{m}_R \otimes 1_{\oc A})(\mathsf{s}_R \otimes 1_{\oc A})\mathsf{m}_{R,A}$
 \[ \mathsf{J}^{-1}_A := \xymatrixcolsep{5pc}\xymatrix{\oc A \ar[r]^-{\mathsf{m}_R \otimes 1_{\oc A}} & \oc R \otimes \oc A\ar[r]^-{\mathsf{s}_R \otimes 1_{\oc A}} & \oc R \otimes \oc A \ar[r]^-{\mathsf{m}_{R,A}} & \oc A  
 } \]
\item $\mathsf{s}_A = (\mathsf{m}_R \otimes 1_A)(\mathsf{s}_R \otimes \mathsf{d}^\circ)(\mathsf{m}_{R,A} \otimes 1_A)$
 \[ \mathsf{s}_A := \xymatrixcolsep{5pc}\xymatrix{\oc A \ar[r]^-{\mathsf{m}_R \otimes 1_{\oc A}} & \oc R \otimes \oc A \ar[r]^-{\mathsf{s}_R \otimes \mathsf{d}^\circ_A} & \oc R \otimes \oc A \otimes A \ar[r]^-{\mathsf{m}_{R,A} \otimes 1_{\oc A}} & \oc A  
 } \]
\end{enumerate}
\end{corollary}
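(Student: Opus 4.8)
The plan is to obtain all three formulas by transporting the monoidal‑unit constructions of Lemmas \ref{Jinv} and \ref{Kinv} to an arbitrary object $A$, exactly in the spirit of Proposition \ref{JKprop1}: integration lives at the unit $R$ and is spread to $A$ through the structure maps $\mathsf{m}_R$ and $\mathsf{m}_{R,A}$. I would establish the formulas in the order (ii), (iii), (i), since (i) is the most computationally involved and reuses ingredients from the other two.

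For (ii), I would verify directly that the proposed map is a two‑sided inverse of $\mathsf{J}_A$, mirroring the computation in Lemma \ref{Jinv} verbatim but with $\oc A$ in place of the second $\oc R$ slot. Concretely, $\mathsf{J}^{-1}_A\mathsf{J}_A = (\mathsf{m}_R \otimes 1_{\oc A})(\mathsf{s}_R \otimes 1_{\oc A})\mathsf{m}_{R,A}\mathsf{J}_A$; pushing $\mathsf{J}_A$ past $\mathsf{m}_{R,A}$ with Proposition \ref{KJm}.ii turns the middle into $\mathsf{s}_R\mathsf{J}_R \otimes 1_{\oc A} = \mathsf{d}^\circ_R \otimes 1_{\oc A}$ by Lemma \ref{sJlem}, and then $\mathsf{m}_R\mathsf{d}^\circ_R = \mathsf{m}_R$ (Proposition \ref{dcircm}) together with Definition \ref{moncoalgmod}.i collapses everything to $1_{\oc A}$. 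The identity $\mathsf{J}_A\mathsf{J}^{-1}_A = 1_{\oc A}$ is obtained the same way, after commuting $\mathsf{J}_A$ onto the right tensor factor and again invoking Proposition \ref{KJm}.ii to move it back to the left factor.

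For (iii), I would start from the definition $\mathsf{s}_A = \mathsf{K}^{-1}_A\mathsf{d}^\circ_A$ (Definition \ref{ASdef}) and expand $\mathsf{K}^{-1}_A$ with Proposition \ref{JKprop1}. The key step is to push $\mathsf{d}^\circ_A$ through $\mathsf{m}_{R,A}$ using the specialization of Proposition \ref{dcircm} to the unit, namely $\mathsf{m}_{R,A}\mathsf{d}^\circ_A = (\mathsf{d}^\circ_R \otimes \mathsf{d}^\circ_A)(\mathsf{m}_{R,A} \otimes 1_A)$, obtained by setting the first object to $R$, using $\sigma_{R,\oc A} = 1$ and absorbing the unit factors. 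This leaves $\mathsf{K}^{-1}_R\mathsf{d}^\circ_R$ acting on the left factor, which is exactly $\mathsf{s}_R$ by Lemma \ref{Kinv}, yielding the claimed $(\mathsf{m}_R \otimes 1_{\oc A})(\mathsf{s}_R \otimes \mathsf{d}^\circ_A)(\mathsf{m}_{R,A} \otimes 1_A)$.

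For (i), I would substitute Lemma \ref{Kinv}'s formula $\mathsf{K}^{-1}_R = \mathsf{s}_R\mathsf{J}^{-1}_R\mathsf{d}_R + \oc(0)$ (together with Lemma \ref{Jinv}'s formula for $\mathsf{J}^{-1}_R$) into the expression from Proposition \ref{JKprop1} and split the sum. The $\oc(0)$ summand reduces to $\oc(0)$ at level $A$ by naturality of $\mathsf{m}$, using $0_R \otimes 1_A = 0_A$ from Definition \ref{addcatdef}, followed by Definition \ref{moncoalgmod}.i. The main summand is where the real work lies: using interchange and $\mathsf{s}_R(\mathsf{m}_R \otimes 1_{\oc R}) = \mathsf{m}_R \otimes \mathsf{s}_R$ one rewrites $\mathsf{s}_R\mathsf{J}^{-1}_R = (\mathsf{m}_R\mathsf{s}_R \otimes \mathsf{s}_R)\mathsf{m}_{R,R}$, and then the unit coherence $\mathsf{m}_R(\mathsf{m}_R \otimes 1_{\oc R}) = \mathsf{m}_R \otimes \mathsf{m}_R$ doubles up the $\mathsf{m}_R$ and $\mathsf{s}_R$ to reach the stated $(\mathsf{m}_R \otimes \mathsf{m}_R \otimes 1_{\oc A})(\mathsf{s}_R \otimes \mathsf{s}_R \otimes 1_{\oc A})(\mathsf{m}_{R,R} \otimes 1_{\oc A})(\mathsf{d}_R \otimes 1_{\oc A})\mathsf{m}_{R,A}$. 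I expect the main obstacle to be precisely this bookkeeping: keeping straight which $\oc R$ tensor factors are fresh constants produced by $\mathsf{m}_R$ and which carry the actual input, since the strict identifications $R \otimes X = X$ make the interchange manipulations easy to misread.
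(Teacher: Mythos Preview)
Your proposal is correct and matches the paper's intended derivation. The paper states the corollary without proof, as the formulas are obtained exactly as you describe: by feeding the expressions for $\mathsf{J}^{-1}_R$ (Lemma \ref{Jinv}), $\mathsf{K}^{-1}_R$ (Lemma \ref{Kinv}), and $\mathsf{s}_R = \mathsf{K}^{-1}_R\mathsf{d}^\circ_R$ into the transport formula $\mathsf{K}^{-1}_A = (\mathsf{m}_R \otimes 1_{\oc A})(\mathsf{K}^{-1}_R \otimes 1_{\oc A})\mathsf{m}_{R,A}$ of Proposition \ref{JKprop1}, together with the unit specialization of Proposition \ref{dcircm} that you identify for part (iii).
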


\section{Polynomials}\label{polysec}

Before working with convenient vector spaces in Section \ref{consec}, it might be useful to work with a simpler example. In this section, we will briefly review one of the most well-known examples of a differential category, or rather, of a codifferential category (the dual of a differential category). This example is induced by the free symmetric algebra construction \cite[Section 8, Chapter XVI ]{lang2002algebra} and the differential structure corresponds to polynomial differentiation. This differential category was introduced in \cite{blute2006differential}, and in certain circumstances was also shown to have antiderivatives in \cite{cockett_lemay_2018}. While we do not go into full details, we will take advantage of Theorem \ref{antithm} and focus mostly on the monoidal unit. 

Let $R$ be a commutative semiring and $\mathsf{MOD}_R$ the category of $R$-modules and $R$-linear maps between them. We briefly explain how $\mathsf{MOD}^{op}_R$ is a differential linear category. $\mathsf{MOD}_R$ is an additive symmetric monoidal category with the standard tensor product and additive enrichment of $R$-modules. For an $R$-module $M$, the free commutative $R$-algebra over $M$ is known as the the free symmetric algebra over $M$ and is denoted by $\mathsf{Sym}(M)$. By the universal property of the free symmetric algebra, we obtain a monad $\mathsf{Sym}$ on $\mathsf{MOD}_R$ which is also an algebra modality which has the Seely isomorphisms: 
\begin{align*}\mathsf{Sym}(M \times N) \cong \mathsf{Sym}(M) \otimes \mathsf{Sym}(N) && \mathsf{Sym}(0)\cong R
\end{align*}
Therefore, $\mathsf{Sym}$ is a comonoidal algebra modality, that is, $\mathsf{Sym}$ is a monoidal coalgebra modality on $\mathsf{MOD}^{op}_R$. Furthermore, $\mathsf{Sym}$ comes equipped with (the dual of) a deriving transformation ${\mathsf{d}_M: \mathsf{Sym}(M) \to \mathsf{Sym}(M) \otimes M}$ given by multivariable polynomial differentiation. Therefore, $\mathsf{MOD}^{op}_R$ is a differential linear category (see \cite{Blute2019,blute2006differential} for more details). 

In particular for the monoidal unit, which is simply $R$ itself, $\mathsf{Sym}(R)$ is isomorphic as $R$-algebras to the polynomial ring $R[x]$. As a result, by abusing notation slightly, the deriving transformation can be interpreted as ${\mathsf{d}_R: R[x] \to R[x]}$ and is given by the standard differentiation of polynomials: 
\[\mathsf{d}_R\left( \sum \limits^n_{k=0} r_k x^k \right) = \sum \limits^n_{k=1} (k \cdot r_{k}) x^{k-1} \]
where on the right hand side, $\cdot$ is the multiplication in $R$ and $k$ is interpreted as the element of $R$ which is the sum of the multiplicative unit $k$-times. Unsurprisingly, the desired integral $\mathsf{s}_R$ will be given by the standard integration of polynomials. For this, we need that all positive sums of the multiplicative unit of $R$ are invertible, or equivalently, that there exists a (unique) semiring morphism ${\mathbb{Q}_{\geq 0} \to R}$ (where $\mathbb{Q}_{\geq 0}$ is the semiring of non-negative rational numbers). So suppose that for each $k \in \mathbb{N}$, where $\mathbb{N}$ is the set of natural numbers, that $k \in R$ is invertible with inverse $k^{-1}$. Define ${\mathsf{s}_R: R[x] \to R[x]}$ using the standard formula for polynomial integration: 
\[\mathsf{s}_R\left(\sum \limits^n_{k=0} r_k x^k \right) = \sum \limits^n_{k=0} \left( (k+1)^{-1} \cdot r_{k} \right) x^{k+1} \]
On the other hand, $\mathsf{Sym}(0): R[x] \to R[x]$ is precisely evaluating a polynomial at zero, which amounts to giving the polynomial's constant term: 
\[\mathsf{Sym}(0)\left(\sum \limits^n_{k=0} r_k x^k \right) = r_0\]
One can then easily check that $\mathsf{d}_R$ and $\mathsf{s}_R$ satisfy the Second Fundamental Theorem of Calculus: 
\begin{align*}
\mathsf{s}_R \left(\mathsf{d}_R\left( \sum \limits^n_{k=0} r_k x^k \right) \right) + \mathsf{Sym}(0)\left(\sum \limits^n_{k=0} r_k x^k \right) &=~ \mathsf{s}_R \left( \sum \limits^n_{k=1} (k \cdot r_{k}) x^{k-1} \right) + r_0 \\
&=~ \sum \limits^n_{k=1} \left(k \cdot (k-1+1)^{-1} \cdot r_k\right) x^{k-1+1} + r_0 \\
&=~ \sum \limits^n_{k=1} \left(k \cdot k^{-1} \cdot r_k \right) x^{k-1+1} + r_0 \\
&=~ \sum \limits^n_{k=1} r_k x^k + r_0 \\
&=~ \sum \limits^n_{k=0} r_k x^k
\end{align*}
And so $\mathsf{d}_R\mathsf{s}_R + \oc(0) = 1_{\oc R}$. If this looks backwards, recall that $\mathsf{MOD}^{op}_R$ is the differential linear category. 

\begin{theorem} Let $R$ be a commutative semiring such that all positive sums of the multiplicative unit are invertible. Then $\mathsf{MOD}^{op}_R$ is a differential linear category with antiderivatives. 
\end{theorem}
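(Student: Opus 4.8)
The plan is to apply Theorem \ref{antithm} directly, since it reduces the claim ``$\mathsf{MOD}^{op}_R$ has antiderivatives'' to the single verification that there exists a map $\mathsf{s}_R: \oc R \to \oc R$ satisfying \textbf{[ftc.2]}, namely $\mathsf{s}_R \mathsf{d}_R + \oc(0) = 1_{\oc R}$. All the preceding exposition in this section has already set up exactly what we need: the identification of the monoidal unit's exponential $\oc R$ with the polynomial ring $R[x]$, the explicit formula for $\mathsf{d}_R$ as polynomial differentiation, the candidate $\mathsf{s}_R$ as polynomial integration, and the description of $\oc(0) = \mathsf{Sym}(0)$ as evaluation at zero (extraction of the constant term).

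First I would note that, because we are working in $\mathsf{MOD}^{op}_R$ rather than $\mathsf{MOD}_R$, the deriving transformation and the integration map are the \emph{opposite-category} versions of the maps written above, so the equation to verify in the differential linear category $\mathsf{MOD}^{op}_R$ is $\mathsf{d}_R \mathsf{s}_R + \oc(0) = 1_{\oc R}$ computed in $\mathsf{MOD}_R$. This is precisely the computation already carried out in the displayed calculation immediately preceding the theorem statement, where applying $\mathsf{s}_R$ then $\mathsf{d}_R$ (read in $\mathsf{MOD}_R$) to $\sum_{k=0}^n r_k x^k$ and adding the constant term $r_0$ recovers the original polynomial. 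The invertibility hypothesis on all positive sums of the multiplicative unit is exactly what makes the coefficients $k \cdot (k+1)^{-1}$ and their analogues collapse to $1$, so that every non-constant term is reproduced faithfully while the constant term is supplied separately by $\oc(0)$.

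Concretely, the proof has three short steps. I would first invoke the already-established fact (from the surrounding discussion, citing \cite{Blute2019,blute2006differential}) that $\mathsf{MOD}^{op}_R$ is a differential linear category, with $\oc = \mathsf{Sym}$ and $\oc R \cong R[x]$. Second, I would exhibit $\mathsf{s}_R$ as the polynomial integration operator, which is well-defined precisely because each $k+1 \in R$ is invertible by hypothesis. Third, I would point to the explicit calculation showing \textbf{[ftc.2]} holds for $\mathsf{s}_R$ and $\mathsf{d}_R$. Theorem \ref{antithm} then immediately yields that $\mathsf{K}$ is a natural isomorphism, i.e.\ that $\mathsf{MOD}^{op}_R$ has antiderivatives.

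I do not expect any genuine obstacle here, since Theorem \ref{antithm} has done the heavy lifting by localizing everything to the monoidal unit; the only point requiring care is the bookkeeping of the opposite category, ensuring that the direction of the Second Fundamental Theorem equation is stated and checked correctly for $\mathsf{MOD}^{op}_R$ (hence the closing remark ``If this looks backwards, recall that $\mathsf{MOD}^{op}_R$ is the differential linear category''). The substantive content is entirely contained in the elementary verification that polynomial differentiation and integration satisfy $\mathsf{d}_R \mathsf{s}_R + \mathsf{Sym}(0) = 1$, which is a transparent consequence of the invertibility assumption on the natural numbers in $R$.
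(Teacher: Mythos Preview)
Your proposal is correct and matches the paper's approach exactly: the paper does not give a separate proof, but simply states the theorem as an immediate consequence of the displayed computation showing $\mathsf{d}_R\mathsf{s}_R + \oc(0) = 1_{\oc R}$ in $\mathsf{MOD}_R$ together with Theorem \ref{antithm}. One small slip in your wording: the displayed calculation applies $\mathsf{d}_R$ first and then $\mathsf{s}_R$ (in $\mathsf{MOD}_R$, diagrammatic order), not the other way around; your stated equation $\mathsf{d}_R\mathsf{s}_R + \oc(0) = 1$ is right, but the parenthetical ``applying $\mathsf{s}_R$ then $\mathsf{d}_R$'' has the order reversed.
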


The induced (dual of) antiderivative integral transformation $\mathsf{s}_M: \mathsf{Sym}(M) \otimes M \to \mathsf{Sym}(M)$ gives a special kind of multivariable polynomial integration which is described in \cite[Example 1]{cockett_lemay_2018}. In particular, this multivariable polynomial integration satisfies the Rota-Baxter rule and the Second Fundamental Theorem of Calculus for any number of finite variables. 

\section{Biproduct Completion of Complete Semirings} \label{bisec}

In this section, we will show that certain generalizations of the relational model give a differential category with antiderivatives. By generalizations of the relation model, we mean the biproduct completion of a complete semiring -- which as the name indicates, gives a generalization of the category of sets and relations, $\mathsf{REL}$. In fact, $\mathsf{REL}$ was one of the original examples of a differential category \cite{blute2006differential} and of a differential category with antiderivatives \cite{cockett_lemay_2018,ehrhard2017introduction}. For more details on generalizations of the relational model, we invite the reader to see \cite{laird2013weighted,lamarche1992quantitative,ong2017quantitative}. 

Briefly, recall that a \textbf{complete semiring} is a semiring where one can have sums indexed by arbitrary sets $I$, which we denote by $\sum \limits_{i \in I}$, such that these summation operations satisfy certain distributivity and partitions axioms (see \cite[Chapter 22]{golan2013semirings} for more details). Now let $R$ be a complete commutative semiring. Define the category $R^\Pi$ whose objects are sets $X$ and where a map from $X$ to $Y$ is a set function $f: X \times Y \to R$. Composition of maps $f: X \times Y \to R$ and $g: Y \times Z \to R$ is the map $fg: X \times Z \to R$ defined as follows: 
\[fg(x,z) := \sum \limits_{y \in Y} f(x, y) \cdot g(y,z) \]
where $\cdot$ is the multiplication in $R$. The identity is given by the Kronecker function $\delta: X \times X \to R$, which is defined as follows: 
 \[\delta(x,y) := \begin{cases} 0 & \text{ if } x \neq y \\
1 & \text{ if } x =y 
\end{cases}\]
For a bit more intuition, maps of $R^\Pi$ should be viewed as generalized $R$-matrices. Composition corresponds to matrix multiplication.While the identity is the diagonal matrix of $1$'s on the diagonal and zero everywhere else. For an explicit example, consider the two-element Boolean algebra \cite{givant2008introduction} $B = \lbrace 0, 1\rbrace$, which is a complete commutative semiring. In this case, $B^\Pi$ is isomorphic to $\mathsf{REL}$, since every map $f: X \times Y \to B$ can be equivalently be described as a subset of $X \times Y$, which is precisely a relation between $X$ and $Y$. 

$R^\Pi$ is the biproduct completion of $R$ viewed as a one object category. The biproduct of objects is given by the disjoint union of sets $\sqcup$ and the zero object is empty set $\emptyset$. As such, $R^\Pi$ is an additive category where the zero maps $0: X \times Y \to R$ simply map everything to $0$, while the sum of maps $f +g: X \times Y \to R$ is defined by pointwise addition: 
\[(f+g)(x,y) := f(x,y) + g(x,y) \] 
$R^\Pi$ is also a symmetric monoidal category where the monoidal unit is a chosen singleton $\lbrace \ast \rbrace$ and the tensor product of objects is given by the standard Cartesian product of sets $\times$. This structure makes $R^\Pi$ an additive symmetric monoidal category.  

$R^\Pi$ is also a differential linear category. For each set $X$, let $\oc X$ be the free commutative monoid over $X$. Elements of $\oc X$ are are finite bags (also known as multisets) of elements of $X$:
\[ \llbracket x_1, \hdots, x_n \vert x_i \in X \rrbracket \in \oc X\]
and including the empty bag $\llbracket \rrbracket$. In particular for the disjoint union of sets and empty set, we also have the following: 
\[\oc(X \sqcup Y) \cong \oc X \times \oc Y \quad \quad \quad \oc \emptyset \cong \lbrace \ast \rbrace\]
This gives a coalgebra modality which satisfies the Seely isomorphisms, and therefore provides a monoidal coalgebra modality on $R^\Pi$ (for a full description of this monoidal coalgebra modality see \cite{laird2013weighted,lamarche1992quantitative,ong2017quantitative}). This monoidal coalgebra modality is in fact a \textbf{free exponential modality} \cite{mellies2017explicit}, making $R^\Pi$ a \textbf{Lafont category} \cite{mellies2009categorical}. The deriving transformation $\mathsf{d}_X: (\oc X \times X) \times \oc X \to R$ is defined as putting single elements into bags: 
\[\mathsf{d}_X((\llbracket x_1, \hdots, x_n \rrbracket, x), \llbracket y_1, \hdots, y_m \rrbracket) = m \cdot \delta(\llbracket x_1, \hdots, x_n, x \rrbracket, \llbracket y_1, \hdots, y_m \rrbracket) \]
Multiplying by $m=n+1$ takes into account that if we were in the unordered case, there would be $n+1$ possible ways of putting an element into a bag of size $n$. Of course the $n+1$ factor disappears in the case that semiring is additively idempotent (i.e. $1+1 = 1$), such as the two-element Boolean algebra $B$. Which is why the $n+1$ factor does not appear in the differential structure of $\mathsf{REL}$ as described in \cite{blute2006differential}.  

Focusing on the monoidal unit $\lbrace \ast \rbrace$, $\oc \lbrace \ast \rbrace$ is isomorphic as a commutative monoid to the set of natural numbers $\mathbb{N}$. The deriving transformation, expressed as ${\mathsf{d}_{\lbrace \ast \rbrace}: \mathbb{N} \times \mathbb{N} \to R}$, is then:
\[\mathsf{d}_{\lbrace \ast \rbrace}(n,m) = m \cdot \delta(n+1, m)\]
Therefore, as in the previous section, we will need inverse of all positive sums of the multiplicative unit to define integration. So once again, assume that for each $n \in \mathbb{N}$, that $n \in R$ is invertible with inverse $n^{-1}$. Define $\mathsf{s}_{\lbrace \ast \rbrace}: \mathbb{N} \times \mathbb{N} \to R$ as follows: 
\[\mathsf{s}_{\lbrace \ast \rbrace}(n,k) = \begin{cases} 0 & \text{ if } n=0 \\
n^{-1} \cdot \delta(n, m+1) & \text{ if } n \geq 1
\end{cases}\]
Before checking the Second Fundamental Theorem of Calculus, let us first examine simply the composite $\mathsf{s}_{\lbrace \ast \rbrace}\mathsf{d}_{\lbrace \ast \rbrace}$: 
\[(\mathsf{s}_{\lbrace \ast \rbrace}\mathsf{d}_{\lbrace \ast \rbrace})(n,k) =\sum \limits_{k \in \mathbb{N}} \mathsf{s}_{\lbrace \ast \rbrace}(n,k) \cdot \mathsf{d}_{\lbrace \ast \rbrace}(k,m) \]
There is only one possible case for when $\mathsf{s}_{\lbrace \ast \rbrace}(n,j) \cdot \mathsf{d}_{\lbrace \ast \rbrace}(j,k) \neq 0$:
\begin{align*}
\mathsf{s}_{\lbrace \ast \rbrace}(n,k) \cdot \mathsf{d}_{\lbrace \ast \rbrace}(k,m) \neq 0 \Leftrightarrow~ n\neq0 \text{ and } n = k+1 \text{ and } k+1=m\Leftrightarrow~ n =m \neq 0 \text{ and } k=n-1
\end{align*}
Hence, $(\mathsf{s}_{\lbrace \ast \rbrace}\mathsf{d}_{\lbrace \ast \rbrace})(n,m) \neq 0$ if and only if $n=m \neq 0$, and in that case we obtain that: 
\begin{align*}
(\mathsf{s}_{\lbrace \ast \rbrace}\mathsf{d}_{\lbrace \ast \rbrace})(n,n) &=~\sum \limits_{k \in \mathbb{N}} \mathsf{s}_{\lbrace \ast \rbrace}(n,k) \cdot \mathsf{d}_{\lbrace \ast \rbrace}(k,n) = \mathsf{s}_{\lbrace \ast \rbrace}(n,n-1) \cdot \mathsf{d}_{\lbrace \ast \rbrace}(n-1,n) = n^{-1} \cdot n = 1
\end{align*}
And so we have that: 
\[(\mathsf{s}_{\lbrace \ast \rbrace}\mathsf{d}_{\lbrace \ast \rbrace})(n,m) = \begin{cases} 0 & \text{ if } n=0 \\
\delta(n,m) & \text{ if } n \geq 1 \end{cases}\]
Now $\oc(0): \mathbb{N} \times \mathbb{N} \to R$ simply checks whether both inputs are zero:
\[\oc(0)(n,m) = \delta(n,0) \cdot \delta(n,m)\]
Therefore if $n=0$, we have that: 
\[(\mathsf{s}_{\lbrace \ast \rbrace}\mathsf{d}_{\lbrace \ast \rbrace})(n,m) + \oc(0)(n,m) = 0 + \delta(n,0) \cdot \delta(n,m) = \delta(n,m) \]
While if $n \neq 0$, we have that:
\[(\mathsf{s}_{\lbrace \ast \rbrace}\mathsf{d}_{\lbrace \ast \rbrace})(n,m) + \oc(0)(n,m) = \delta(n,m) + \delta(n,0) \cdot \delta(n,m) = \delta(n,m) \]
And so we conclude that $\mathsf{s}_{\lbrace \ast \rbrace}\mathsf{d}_{\lbrace \ast \rbrace} + \oc(0) = \delta$, where recall that $\delta$ is the identity in $R^{\Pi}$. 

\begin{theorem} Let $R$ be a commutative complete semiring such that all positive sums of the multiplicative unit are invertible. Then $R^{\Pi}$ is a differential linear category with antiderivatives. 
\end{theorem}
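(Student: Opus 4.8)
The plan is to apply Theorem~\ref{antithm}, which reduces the existence of antiderivatives in a differential linear category to producing a single map $\mathsf{s}_R$ on the monoidal unit satisfying \textbf{[ftc.2]}. All of the structural work is already in place: the preceding discussion exhibits $R^\Pi$ as a differential linear category, with monoidal unit the singleton $\lbrace \ast \rbrace$, free exponential modality $\oc$ sending a set to the commutative monoid of finite bags over it, and deriving transformation $\mathsf{d}$ given by inserting single elements into bags. So the only thing left to supply is a suitable integration map on the unit and the verification that it is compatible with $\mathsf{d}$.

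First I would pass to the unit component. Since $\oc \lbrace \ast \rbrace \cong \mathbb{N}$ as commutative monoids, both $\mathsf{d}_{\lbrace \ast \rbrace}$ and the candidate $\mathsf{s}_{\lbrace \ast \rbrace}$ are maps $\mathbb{N} \times \mathbb{N} \to R$, i.e. generalized $R$-matrices indexed by $\mathbb{N}$. The deriving transformation $\mathsf{d}_{\lbrace \ast \rbrace}(n,m) = m \cdot \delta(n+1,m)$ is the weighted superdiagonal shift, so one expects a one-sided inverse (up to the constant part) to be the weighted subdiagonal shift in the opposite direction. This is exactly where the hypothesis enters: to define $\mathsf{s}_{\lbrace \ast \rbrace}(n,k) = n^{-1}\cdot \delta(n,k+1)$ for $n \geq 1$ (and $0$ when $n=0$), one needs every positive integer sum of the multiplicative unit to be invertible in $R$.

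The main step, and the only genuine content, is verifying \textbf{[ftc.2]}, namely $\mathsf{s}_{\lbrace \ast \rbrace}\mathsf{d}_{\lbrace \ast \rbrace} + \oc(0) = \delta$. This is a matrix computation: in the sum $\sum_k \mathsf{s}_{\lbrace \ast \rbrace}(n,k)\cdot \mathsf{d}_{\lbrace \ast \rbrace}(k,m)$ the only surviving term forces $k = n-1$ and $m = n$, and the coefficient collapses to $n^{-1}\cdot n = 1$ precisely by invertibility. The subtle point, and where I expect the bookkeeping to require care, is the boundary value $n = 0$: there $\mathsf{s}_{\lbrace \ast \rbrace}$ vanishes, so the composite contributes nothing on the diagonal at the origin, and it is the correction term $\oc(0)(n,m) = \delta(n,0)\cdot\delta(n,m)$ (evaluation at the constant/empty bag) that supplies the missing $(0,0)$ entry. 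Splitting into the cases $n=0$ and $n \geq 1$ then yields $\delta$ on the nose, which is the identity of $R^\Pi$.

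With \textbf{[ftc.2]} established at the unit, Theorem~\ref{antithm} applies directly and gives that $R^\Pi$ has antiderivatives; combined with the already-noted differential linear structure, this is the full claim. I would not expect any further obstruction, since Theorem~\ref{antithm} has done the heavy lifting of transporting the one-dimensional integral $\mathsf{s}_{\lbrace \ast \rbrace}$ to every object, so that no object beyond the monoidal unit need be analyzed.
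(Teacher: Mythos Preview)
Your proposal is correct and follows essentially the same approach as the paper: define $\mathsf{s}_{\lbrace \ast \rbrace}$ as the weighted subdiagonal shift using the hypothesized inverses, compute the matrix product $\mathsf{s}_{\lbrace \ast \rbrace}\mathsf{d}_{\lbrace \ast \rbrace}$ with the $n=0$ boundary handled by the $\oc(0)$ term, and invoke Theorem~\ref{antithm}. The paper carries out exactly this calculation, with the same case split on $n$, before stating the theorem.
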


The resulting antiderivative integral transformation $\mathsf{s}_X: \oc X \times (\oc X \times X) \to R$ amounts to pulling out a single element from a bag: 
\[\mathsf{s}_X( \llbracket y_1, \hdots, y_n \rrbracket, (\llbracket x_1, \hdots, x_m \rrbracket, x) ) = \begin{cases} 0 & \text{ if } \llbracket y_1, \hdots, y_n \rrbracket = \llbracket \rrbracket \\ 
n^{-1} \cdot \delta(\llbracket y_1, \hdots, y_n \rrbracket, \llbracket x_1, \hdots, x_m, x \rrbracket) & \text{ o.w. }\end{cases}\]
If $R$ is additively idempotent then $n^{-1} = 1$ and so the antiderivative integral transformation is precisely the coderiving transformation (Definition \ref{dcircdef}). This is the case for $\mathsf{REL}$ \cite[Example 2]{cockett_lemay_2018}. 

\section{Convenient Vector Spaces} \label{consec}

In this section, we show that the differential category of convenient vector spaces \cite{blute2010convenient} has antiderivatives, which is the main goal of this paper. For a detailed introduction to the theory of convenient vector spaces, we invite the reader to see \cite{frolicher1988linear,kriegl1997convenient}. Throughout this section, we follow mostly the terminology and notation used in \cite{blute2010convenient}. 

Recall that a locally convex space is a topological $\mathbb{R}$-vector space (where $\mathbb{R}$ is the reals) which is Hausdorff and such that $0$ has a neighbourhood basis of convex sets or equivalently, an $\mathbb{R}$-vector space with a family of seminorms which separates points (see \cite{treves2016topological} for more details). It should be noted that in some definitions of locally convex spaces, the requirement that the topology is Hausdorff is not necessary. However, following the conventions used in \cite{blute2010convenient,frolicher1988linear,kriegl1997convenient}, we assume that our locally convex spaces are Hausdorff to insure that all derivatives be unique. Playing a fundamental role in the theory of convenient vector spaces is the notion of smooth curves. 

\begin{definition} Let $E$ be a locally convex space. 
\begin{enumerate}[{\em (i)}]
\item A \textbf{curve} is a function $\mathsf{c}: \mathbb{R} \to E$. 
\item A curve $\mathsf{c}: \mathbb{R} \to E$ is \textbf{differentiable} if the limit:
\begin{equation}\label{}\begin{gathered} \lim \limits_{t \to 0} \frac{\mathsf{c}(x+t) - \mathsf{c}(x)}{t}\end{gathered}\end{equation}
exists for all $x \in E$. We define the \textbf{derivative} of $\mathsf{c}$ to be the curve $\mathsf{c}^\prime: \mathbb{R} \to E$ where:
\begin{equation}\label{}\begin{gathered}\mathsf{c}^\prime(x):= \lim \limits_{t \to 0} \frac{\mathsf{c}(x+t) - \mathsf{c}(x)}{t}\end{gathered}\end{equation}
\item A curve is said to be \textbf{smooth} if all its iterated derivatives exists, that is, the curve is infinitely differentiable. Let $\mathcal{C}^\infty(E)$ denote the set of smooth curves of $E$. 
\end{enumerate}
\end{definition}

There are numerous equivalent ways of defining a convenient vector space, see for example \cite[Theorem 2.14]{kriegl1997convenient}. For the purpose of this paper, the main definition of interest is the one which states that every smooth curves admits an antiderivative: 

\begin{definition} A \textbf{convenient vector space} \cite{frolicher1988linear,kriegl1997convenient} is a locally convex space $E$ such that for every smooth curve $\mathsf{c} \in \mathcal{C}^\infty(E)$ there exists a smooth curve $\tilde{\mathsf{c}} \in \mathcal{C}^\infty(E)$ such that $\tilde{\mathsf{c}}^\prime = \mathsf{c}$. We say that $\tilde{\mathsf{c}}$ is an antiderivative of $\mathsf{c}$. 
\end{definition}

One antiderivative in particular is the one provided by Riemann integrals: 

\begin{lemma}\cite[Lemma 2.5]{kriegl1997convenient} \label{intdef} Let $E$ be a convenient vector space. Then for every smooth curve $\mathsf{c} \in \mathcal{C}^\infty(E)$, there exists a unique smooth curve $\int \mathsf{c} \in \mathcal{C}^\infty(E)$ such that $\left( \int \mathsf{c} \right)^\prime = \mathsf{c}$ and $(\int \mathsf{c})(0) = 0$. 
\end{lemma}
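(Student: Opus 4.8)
The plan is to split the statement into existence and uniqueness, each of which reduces to a routine observation. For existence, I would invoke the defining property of a convenient vector space directly: the given $\mathsf{c} \in \mathcal{C}^\infty(E)$ admits \emph{some} antiderivative $\tilde{\mathsf{c}} \in \mathcal{C}^\infty(E)$ with $\tilde{\mathsf{c}}^\prime = \mathsf{c}$. This $\tilde{\mathsf{c}}$ need not vanish at $0$, so I would normalize it by subtracting the constant curve with value $\tilde{\mathsf{c}}(0)$, defining $\int \mathsf{c} := \tilde{\mathsf{c}} - \tilde{\mathsf{c}}(0)$. Since the constant curve is smooth with zero derivative and $\mathcal{C}^\infty(E)$ is closed under subtraction, $\int \mathsf{c}$ is again a smooth curve, its derivative is still $\mathsf{c}$, and $(\int \mathsf{c})(0) = 0$ by construction.

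For uniqueness, I would show that any two such normalized antiderivatives coincide. Suppose $\mathsf{f}, \mathsf{g} \in \mathcal{C}^\infty(E)$ both satisfy $\mathsf{f}^\prime = \mathsf{g}^\prime = \mathsf{c}$ and $\mathsf{f}(0) = \mathsf{g}(0) = 0$, and set $\mathsf{h} := \mathsf{f} - \mathsf{g}$, a smooth curve with $\mathsf{h}^\prime = 0$ and $\mathsf{h}(0) = 0$. The crux is to deduce $\mathsf{h} = 0$. I would do this by reducing to the real line: for any continuous linear functional $\ell : E \to \mathbb{R}$, continuity and linearity allow $\ell$ to pass through the difference-quotient limit defining the derivative, so $(\ell \circ \mathsf{h})^\prime = \ell \circ \mathsf{h}^\prime = 0$. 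Thus $\ell \circ \mathsf{h} : \mathbb{R} \to \mathbb{R}$ is a real smooth function with vanishing derivative, hence constant by the classical mean value theorem, and equal to its value $\ell(\mathsf{h}(0)) = 0$ at the origin.

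The concluding step uses the Hausdorff hypothesis built into the definition of a locally convex space: its separating family of seminorms is equivalent to the continuous linear functionals separating points, so $\ell(\mathsf{h}(x)) = 0$ for every $\ell$ and every $x$ forces $\mathsf{h}(x) = 0$, giving $\mathsf{f} = \mathsf{g}$. I expect the main obstacle to be precisely this reduction-to-coordinates argument in the uniqueness half: one must verify that differentiation of curves commutes with continuous linear functionals and then lean on point-separation to upgrade \say{zero against every functional} to \say{zero in $E$}. The existence half, by contrast, is essentially immediate from the definition once one remembers to subtract off the constant.
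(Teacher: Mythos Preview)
Your proposal is correct and follows essentially the same route as the paper: existence is obtained exactly as you do, by taking any antiderivative $\tilde{\mathsf{c}}$ guaranteed by the definition of convenient vector space and normalizing via $\int \mathsf{c} := \tilde{\mathsf{c}} - \tilde{\mathsf{c}}(0)$. For uniqueness the paper simply asserts independence of the choice of $\tilde{\mathsf{c}}$ with a citation (and remarks that $\int \mathsf{c}$ can alternatively be given by the Riemann integral $\int_0^r \mathsf{c}(t)\,\mathsf{d}t$), whereas you spell out the standard argument via continuous linear functionals and the Hausdorff hypothesis; this is precisely the content behind that citation, so there is no substantive difference.
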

\begin{proof} Given any antiderivative $\tilde{\mathsf{c}}$ of $\mathsf{c}$, define $\int \mathsf{c}: \mathbb{R} \to E$ as follows: 
 \begin{equation}\label{}\begin{gathered} \int \mathsf{c} : = \tilde{\mathsf{c}} - \tilde{\mathsf{c}}(0) \end{gathered}\end{equation}
 where $\tilde{\mathsf{c}}(0): \mathbb{R} \to E$ is viewed as a constant smooth curve. This definition is independent of the choice of antiderivative $\tilde{\mathsf{c}}$ \cite{kock1985calculus}, and clearly $\left( \int \mathsf{c} \right)^\prime = \mathsf{c}$ and $(\int \mathsf{c})(0) = 0$. For a more explicit description, $\int \mathsf{c}$ can also be defined as follows: 
\begin{equation}\label{}\begin{gathered} \left( \int \mathsf{c} \right)(r) = \int \limits^r_0 \mathsf{c}(t) ~\mathsf{d}t
 \end{gathered}\end{equation}
 where $ \int \limits^b_a \mathsf{c}(t) ~\mathsf{d}t$ is the standard Riemann integral for topological vector spaces. \end{proof} 

We now wish to define the category of convenient vector spaces. The only remaining question is which maps to take for this category. For convenient vector spaces, there are two important sets of maps: the smooth maps and the bounded linear maps. An equivalent definition of a convenient vector space can be expressed using the bornology of a locally convex space \cite{blute2010convenient,frolicher1988linear,kriegl1997convenient}. Recall that in a locally convex space $E$, a subset $B \subseteq E$ is bounded if for every open subset $U \subseteq E$ containing $0$, there exists a positive real $r > 0$ such that $B \subseteq r \cdot U$. 

\begin{definition} Let $E$ and $F$ be convenient vector spaces. 
\begin{enumerate}[{\em (i)}]
\item A \textbf{bounded linear map}\footnote{It is worth mentioning that a bounded linear map is the same thing as a continuous linear map. } is a linear map $f: E \to F$ which maps bounded sets to bounded sets, that is, if $B \subseteq E$ is bounded, then $f(B) \subseteq F$ is bounded.
\item A \textbf{smooth map} is a function $f: E \to F$ which preserves smooth curves, that is, if $\mathsf{c} \in \mathcal{C}^\infty(E)$ then $\mathsf{c} f \in \mathcal{C}^\infty(F)$. Let $\mathcal{C}^\infty(E, F)$ denote the set of smooth maps between $E$ and $F$. 
\end{enumerate}
\end{definition}

We will soon see that smooth maps in this context are precisely the coKleisli maps of a certain monoidal coalgebra modality. Note that every bounded linear map is smooth \cite{frolicher1988linear,kriegl1997convenient} and since $\mathbb{R}$ is a convenient vector space, that $\mathcal{C}^\infty(E, \mathbb{R}) = \mathcal{C}^\infty(E)$. Furthermore, for every pair of convenient vector spaces $E$ and $F$, $\mathcal{C}^\infty(E, F)$ is also a convenient vector space \cite[Corollary 5.9]{blute2010convenient}. 

Let $\mathsf{CON}$ be the category of convenient vector spaces and bounded linear maps between them. As shown in \cite[Section 4]{blute2010convenient}, $\mathsf{CON}$ is an additive symmetric monoidal closed category where the additive structure is given by biproducts, the tensor product is given by the Mackey completion \cite[Lemma 2.2]{kriegl1997convenient} of the algebraic tensor product, and the monoidal unit is $\mathbb{R}$. We should note that while Mackey completion plays an important role in the theory of convenient vector spaces, it is not crucial to the understanding of how to obtain an integral and antiderivatives in the differential category context. For the purpose of this paper, one only needs to understand Lemma \ref{intdef} and that there is a bijective correspondence between smooth maps and bounded linear maps which involves the smallest convenient vector space containing the image of a certain evaluation map. For more details on Mackey completeness and the category $\mathsf{CON}$, see \cite{blute2010convenient,frolicher1988linear,kriegl1997convenient}. 

We will now give an overview of the differential linear category structure of $\mathsf{CON}$. For every convenient vector space $E$, let $E^\ast := \mathsf{CON}(E, \mathbb{R})$ denote the set of bounded linear functionals. Define the smooth map $\mathsf{ev}_E: E \to \mathcal{C}^\infty(E)^\ast$ \cite[Lemma 6.1]{blute2010convenient} as the evaluation map: 
\begin{equation}\label{}\begin{gathered} \mathsf{ev}_E(x)(\mathsf{c}) := \mathsf{c}(x)
\end{gathered}\end{equation}
Then define $\oc E$ \cite[Definition 6.2]{blute2010convenient} as the Mackey completion of the image $\mathsf{ev}_E(x)$ in $\mathcal{C}^\infty(E)^\ast$, in other words, ${\oc E \subset \mathcal{C}^\infty(E)^\ast}$ is the smallest convenient vector space which contains $\mathsf{ev}_E(E)$. Define the resulting induced smooth map $\delta_E: E \to \oc E$ as:
\begin{equation}\label{}\begin{gathered} \delta_E(x) := \mathsf{ev}_E(x)
\end{gathered}\end{equation}
This gives a coalgebra modality $\oc$ on $\mathsf{CON}$ which satisfies the Seely isomorphisms \cite[Lemma 6.4]{blute2010convenient}:
\[\oc(E \times F) \cong \oc E \otimes \oc F \quad \quad \oc(0) \cong \mathbb{R}\]
and therefore is also a monoidal coalgebra modality (for full details see \cite[Section 6]{blute2010convenient}). Furthermore, as promised, the coKleisi maps of this coalgebra modality are precisely the smooth maps between convenient vector spaces \cite[Theorem 6.3]{blute2010convenient}:
\begin{equation}\label{}\begin{gathered} \mathsf{CON}(\oc E, F) \cong \mathcal{C}^\infty(E, F) \end{gathered}\end{equation}
In particular, the isomorphism in the direction $\mathsf{CON}(\oc E, F) \to \mathcal{C}^\infty(E, F)$ is given by precomposing with $\delta_E$. This implies that for every smooth map $f: E \to F$ there exists a unique bounded linear map $g: \oc E \to F$ such that the following diagram commutes: 
\begin{equation}\label{}\begin{gathered} \xymatrixcolsep{5pc}\xymatrix{E \ar[r]^-{\delta_E} \ar[dr]_-{f} & \oc E \ar[d]^-{g} \\  
 & F
 } \end{gathered}\end{equation}
The deriving transformation $\mathsf{d}_E: \oc E \otimes E \to \oc E$ is given by differentiating smooth maps in the classical sense \cite[Proposition 5.12]{blute2010convenient}. In particular, one has that: 
\begin{equation}\label{}\begin{gathered} \mathsf{d}_E\left(\delta_E(x) \otimes y \right) := \lim \limits_{t \to 0} \frac{\delta_E(x + t \cdot y) - \delta(x)}{t} \end{gathered}\end{equation}
To help us understand the derivative of a smooth maps, note that every bounded linear map $g: \oc E \otimes E \to F$ can equivalently be described as a smooth map $\overline{g}: E \times E \to F$ which is linear in its second argument. Explicitly, $\overline{g}$ is the unique smooth map such that the following diagram commutes:
\begin{equation}\label{ddef}\begin{gathered} \xymatrixcolsep{5pc}\xymatrix{E \times E \ar[r]^-{\delta_E} \ar[ddrrr]_-{\overline{g}} & \oc (E \times E) \ar[r]^-{\chi} & \oc E \otimes \oc E \ar[r]^-{1_{\oc E} \otimes \varepsilon_E} & \oc E \otimes E \ar[dd]^-{g} \\ \\
 & && F
 } \end{gathered}\end{equation}
 where recall that $\chi$ is the Seely isomorphism (Definition \ref{Seelydef}). Then for a smooth map $f: E \to F$, its derivative $\mathsf{D}[f] = \mathsf{d}_E f: \oc E \otimes E \to \oc E$ can be seen as smooth map $\overline{\mathsf{D}[f]}: E \times E \to F$ which is linear in its second argument and given by: 
\begin{equation}\label{}\begin{gathered} \overline{\mathsf{D}[f]}(x, y) := \lim \limits_{t \to 0} \frac{f(x + t \cdot y) - f(x)}{t} \end{gathered}\end{equation}
Note that this is the standard definition of the derivative in multivariable differential calculus. This is also precisely the Cartesian differential category structure of the coKleisli category of $\oc$ \cite{blute2009cartesian}. In this case, the coKleisli category of $\oc$ is isomorphic to the category of convenient vector spaces and smooth maps between them. 

We will now show that we have antiderivatives in the differential category context, that is, we wish to apply Theorem \ref{antithm}. So we turn our attention to the monoidal unit. For the monoidal unit $\mathbb{R}$, $\delta_\mathbb{R}: \mathbb{R} \to \oc \mathbb{R}$ is a smooth curve -- which makes our work much easier since smooth curves behave very nicely for convenient vector spaces. One can check that its derivative $\delta^\prime_\mathbb{R}: \mathbb{R} \to \oc \mathbb{R}$ is given by evaluating derivatives of smooth curves:
\[\delta^\prime_\mathbb{R}(r)(\mathsf{c}) = \mathsf{c}^\prime(r) \quad \quad \mathsf{c} \in \mathcal{C}^\infty(\mathbb{R})\]
As a result, the deriving transformation $\mathsf{d}_\mathbb{R}: \oc \mathbb{R} \to \oc \mathbb{R}$ is the unique bounded linear map such that the following diagram commutes: 
\begin{equation}\label{sdef}\begin{gathered} \xymatrixcolsep{5pc}\xymatrix{\mathbb{R} \ar[r]^-{\delta_\mathbb{R}} \ar[dr]_-{\delta_\mathbb{R}^\prime} & \oc \mathbb{R} \ar[d]^-{\mathsf{d}_\mathbb{R}} \\  
 &\oc \mathbb{R}
 } \end{gathered}\end{equation}
To obtain the desired integral $\mathsf{s}_\mathbb{R}$, we apply Lemma \ref{intdef} to $\delta_\mathbb{R}$ to obtain its special antiderivative $\int \delta_\mathbb{R}: \mathbb{R} \to \oc \mathbb{R}$. By uniqueness of this antiderivative, one can easily check that $\int \delta_\mathbb{R}$ is given by evaluating antiderivative of smooth curves:
\[\left(\int \delta_\mathbb{R} \right)(r)(\mathsf{c}) =\left(\int \mathsf{c} \right)(r) \quad \quad \mathsf{c} \in \mathcal{C}^\infty(\mathbb{R})\]
Define $\mathsf{s}_\mathbb{R}: \oc \mathbb{R} \to \oc \mathbb{R}$ as the unique bounded linear map such that the following diagram commutes: 
\begin{equation}\label{}\begin{gathered} \xymatrixcolsep{5pc}\xymatrix{\mathbb{R} \ar[r]^-{\delta_\mathbb{R}} \ar[dr]_-{\int\delta_\mathbb{R}} & \oc \mathbb{R} \ar[d]^-{\mathsf{s}_\mathbb{R}} \\  
 &\oc \mathbb{R}
 } \end{gathered}\end{equation}
As usual, $\oc(0): \oc \mathbb{R} \to \oc \mathbb{R}$ is given by evaluating at zero, that is, $\oc(0)$ is the unique bounded linear map such that the following diagram commutes: 
\begin{equation}\label{!0def}\begin{gathered} \xymatrixcolsep{5pc}\xymatrix{\mathbb{R} \ar[r]^-{\delta_\mathbb{R}} \ar[dr]_-{\delta_\mathbb{R}(0)} & \oc \mathbb{R} \ar[d]^-{\oc(0)} \\  
 &\oc \mathbb{R}
 } \end{gathered}\end{equation}
That $\mathsf{d}_\mathbb{R}$ and $\mathsf{s}_\mathbb{R}$ satisfy the Second Fundamental Theorem of Calculus, that is, $\mathsf{s}_\mathbb{R}$ satisfies \textbf{[ftc.2]}, follows mostly from the fact that the Second Fundamental Theorem of Calculus holds in the convenient vector space context. 

\begin{lemma}\cite[Corollary 2.6.(6)]{kriegl1997convenient} \label{intFT2} Let $E$ be a convenient vector space. Then for every smooth curve $\mathsf{c} \in \mathcal{C}^\infty(E)$, the following equality holds: 
\begin{equation}\label{}\begin{gathered} \int \mathsf{c}^\prime = \mathsf{c} - \mathsf{c}(0) \end{gathered}\end{equation}
where $\mathsf{c}(0): \mathbb{R} \to E$ is viewed as a constant smooth function. 
\end{lemma}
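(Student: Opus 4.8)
The plan is to invoke the uniqueness clause of Lemma \ref{intdef}. Since $\int \mathsf{c}^\prime$ is by definition the \emph{unique} smooth curve $\gamma \in \mathcal{C}^\infty(E)$ satisfying $\gamma^\prime = \mathsf{c}^\prime$ and $\gamma(0) = 0$, it suffices to exhibit the curve $\mathsf{c} - \mathsf{c}(0)$ as one such $\gamma$; uniqueness then forces the stated equality. Note first that $\mathsf{c}^\prime$ is itself a smooth curve, since $\mathsf{c}$ is infinitely differentiable, so that $\int \mathsf{c}^\prime$ is indeed well-defined in the first place.

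I would then verify the two defining conditions for $\gamma := \mathsf{c} - \mathsf{c}(0)$, where $\mathsf{c}(0)$ denotes the constant curve with value $\mathsf{c}(0) \in E$. Smoothness of $\gamma$ is immediate: $\mathcal{C}^\infty(E)$ is a vector space and constant curves are smooth, so the difference of two smooth curves is again smooth. Evaluating at the origin gives $\gamma(0) = \mathsf{c}(0) - \mathsf{c}(0) = 0$, which is the second condition. For the first condition, differentiation of curves is linear and the derivative of a constant curve vanishes; both facts are read off directly from the limit definition of the derivative, since $\lim_{t \to 0} \tfrac{v - v}{t} = 0$ for the constant value $v = \mathsf{c}(0)$. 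Hence $\gamma^\prime = \mathsf{c}^\prime - 0 = \mathsf{c}^\prime$.

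Having checked that $\mathsf{c} - \mathsf{c}(0)$ is a smooth curve whose derivative is $\mathsf{c}^\prime$ and which vanishes at $0$, the uniqueness asserted in Lemma \ref{intdef} yields $\int \mathsf{c}^\prime = \mathsf{c} - \mathsf{c}(0)$, as claimed. There is no genuine obstacle here: the entire content of the statement is carried by the existence-and-uniqueness result of Lemma \ref{intdef}, and the only points requiring (routine) care are that differentiation of curves is linear and annihilates constants, both of which follow immediately from the pointwise limit definition of $\mathsf{c}^\prime$.
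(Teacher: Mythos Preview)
Your argument is correct. The paper does not supply its own proof of this lemma; it simply cites \cite[Corollary 2.6.(6)]{kriegl1997convenient}. Your approach via the uniqueness clause of Lemma~\ref{intdef} is exactly the natural one, and in fact it is already implicit in the paper's proof of Lemma~\ref{intdef} itself: there $\int \mathsf{c}$ is constructed as $\tilde{\mathsf{c}} - \tilde{\mathsf{c}}(0)$ for \emph{any} antiderivative $\tilde{\mathsf{c}}$ of $\mathsf{c}$, and taking $\mathsf{c}$ itself as an antiderivative of $\mathsf{c}^\prime$ gives the statement immediately.
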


We will also need the following lemma which allows us to pull bounded linear maps in and out of antiderivatives. 
  
\begin{lemma} \label{kock1} \cite[Proposition 2.3]{kock1985calculus} Let $E$ be a convenient vector space. Then for every smooth curve $\mathsf{c} \in \mathcal{C}^\infty(E)$ and bounded linear map $f: E \to F$, the following equality holds: 
\begin{equation}\label{}\begin{gathered} \int (\mathsf{c}f) = \left( \int \mathsf{c} \right) f
 \end{gathered}\end{equation}
\end{lemma}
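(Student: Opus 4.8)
The plan is to prove the identity by appealing to the uniqueness clause of Lemma \ref{intdef}. That lemma characterizes $\int(\mathsf{c}f)$ as the \emph{unique} smooth curve in $F$ whose derivative is $\mathsf{c}f$ and whose value at $0$ is $0$ (here $F$ is itself a convenient vector space, so the lemma applies to it). Hence it suffices to show that the candidate curve $\left(\int \mathsf{c}\right)f$ is smooth, satisfies $\left(\left(\int \mathsf{c}\right)f\right)^\prime = \mathsf{c}f$, and vanishes at $0$; the desired equality then follows at once from uniqueness.

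Two of these three conditions are routine. First, $\int \mathsf{c}$ is a smooth curve in $E$ by Lemma \ref{intdef}, and every bounded linear map is smooth, so the composite $\left(\int \mathsf{c}\right)f$ is a smooth curve in $F$. Second, again by Lemma \ref{intdef} we have $\left(\int \mathsf{c}\right)(0) = 0$, and since $f$ is linear it preserves $0$, whence $\left(\left(\int \mathsf{c}\right)f\right)(0) = f\!\left(\left(\int \mathsf{c}\right)(0)\right) = f(0) = 0$.

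The substantive step is verifying the derivative condition, and I would isolate it as the general fact that a bounded linear map commutes with differentiation of curves: for any differentiable curve $\gamma \colon \mathbb{R} \to E$ one has $(\gamma f)^\prime = \gamma^\prime f$. This is where the two defining features of $f$ enter. Using linearity of $f$, the difference quotient of $\gamma f$ at $x$ rewrites as $f$ applied to the difference quotient of $\gamma$, namely $\frac{f(\gamma(x+t)) - f(\gamma(x))}{t} = f\!\left(\frac{\gamma(x+t) - \gamma(x)}{t}\right)$; and since a bounded linear map between locally convex spaces is continuous, $f$ may be moved outside the limit as $t \to 0$, giving $(\gamma f)^\prime(x) = f(\gamma^\prime(x))$. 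Applying this with $\gamma = \int \mathsf{c}$ and using $\left(\int \mathsf{c}\right)^\prime = \mathsf{c}$ yields $\left(\left(\int \mathsf{c}\right)f\right)^\prime = \mathsf{c}f$, completing all three checks.

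The main obstacle is precisely this interchange of $f$ with the limit defining the derivative; it rests on the equivalence between boundedness and continuity for linear maps in this setting (noted earlier in the section), which guarantees that $f$ preserves the convergence of difference quotients. Once that commutation is in hand, the proof is a direct application of the uniqueness in Lemma \ref{intdef}.
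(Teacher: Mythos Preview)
Your proof is correct. Note that the paper does not actually prove this lemma: it is stated with a citation to \cite[Proposition 2.3]{kock1985calculus} and used without further justification, so there is no ``paper's own proof'' to compare against. Your argument via the uniqueness clause of Lemma~\ref{intdef} is exactly the natural one, and the only nontrivial step---that a bounded linear map commutes with the limit defining the derivative---is handled correctly by invoking the continuity of $f$ (the paper itself remarks in a footnote that bounded linear and continuous linear coincide here). One small point worth making explicit: you need $F$ to be a convenient vector space so that Lemma~\ref{intdef} applies to the curve $\mathsf{c}f$; you do note this, and it is indeed implicit in the context (everything in $\mathsf{CON}$ is convenient), though the lemma as stated only hypothesizes it for $E$.
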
  

Finally, to show that $\mathsf{s}_\mathbb{R} \mathsf{d}_\mathbb{R} + \oc(0) = 1_{\oc \mathbb{R}}$, it suffices to show that $\delta_\mathbb{R}\mathsf{s}_\mathbb{R}\mathsf{d}_\mathbb{R} = \delta_\mathbb{R} - \delta_\mathbb{R}\oc(0)$. 
\begin{align*}
\delta_\mathbb{R}\mathsf{s}_\mathbb{R}\mathsf{d}_\mathbb{R} &=~\left( \int \delta \right) \mathsf{d}_\mathbb{R}\tag{\ref{sdef}} \\
&=~\left( \int \delta \mathsf{d}_\mathbb{R} \right) \tag{Lemma \ref{kock1}} \\
&=~ \int \delta^\prime_\mathbb{R} \tag{\ref{ddef}} \\
&=~\delta_\mathbb{R} - \delta_\mathbb{R}(0) \tag{Lemma \ref{intFT2}}\\
&=~ \delta_\mathbb{R} - \delta_\mathbb{R} \oc(0)\tag{\ref{!0def}}
\end{align*}
And so we conclude that: 
\begin{theorem}\label{mainthm} $\mathsf{CON}$ is a differential linear category with antiderivatives. 
\end{theorem}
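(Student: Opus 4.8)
The plan is to apply Theorem \ref{antithm}, which reduces the entire claim to producing a single map $\mathsf{s}_\mathbb{R}: \oc\mathbb{R} \to \oc\mathbb{R}$ on the monoidal unit satisfying \textbf{[ftc.2]}, namely $\mathsf{s}_\mathbb{R}\mathsf{d}_\mathbb{R} + \oc(0) = 1_{\oc\mathbb{R}}$. Since $\mathsf{CON}$ is already known to be a differential linear category (its coalgebra modality satisfies the Seely isomorphisms and carries a deriving transformation, per \cite{blute2010convenient}), this is the only thing left to check, and the great advantage is that we work entirely with the monoidal unit $\mathbb{R}$ rather than with arbitrary infinite-dimensional convenient vector spaces.

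First I would exploit the defining feature of convenient vector spaces: every smooth curve admits an antiderivative, and by Lemma \ref{intdef} there is a canonical one $\int\mathsf{c}$ normalized by $(\int\mathsf{c})(0)=0$. The key observation is that $\delta_\mathbb{R}: \mathbb{R}\to\oc\mathbb{R}$ is itself a smooth curve, so I can define $\mathsf{s}_\mathbb{R}$ as the unique bounded linear map whose precomposition with $\delta_\mathbb{R}$ equals the special antiderivative $\int\delta_\mathbb{R}$; existence and uniqueness are guaranteed by the bijective correspondence $\mathsf{CON}(\oc\mathbb{R},\mathbb{R}) \cong \mathcal{C}^\infty(\mathbb{R})$ between bounded linear maps out of $\oc\mathbb{R}$ and smooth curves. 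Similarly $\mathsf{d}_\mathbb{R}$ corresponds to the derivative curve $\delta'_\mathbb{R}$ and $\oc(0)$ to the constant curve $\delta_\mathbb{R}(0)$.

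Next, to verify \textbf{[ftc.2]} it suffices to check the identity after precomposing with $\delta_\mathbb{R}$, again because $\delta_\mathbb{R}$ generates $\oc\mathbb{R}$ in the sense that a bounded linear map out of $\oc\mathbb{R}$ is determined by its restriction along $\delta_\mathbb{R}$. So the target reduces to showing $\delta_\mathbb{R}\mathsf{s}_\mathbb{R}\mathsf{d}_\mathbb{R} = \delta_\mathbb{R} - \delta_\mathbb{R}\oc(0)$. The computation then proceeds by translating each categorical operation into its curve-level meaning: $\delta_\mathbb{R}\mathsf{s}_\mathbb{R} = \int\delta_\mathbb{R}$ by definition, then Lemma \ref{kock1} lets me pull the bounded linear map $\mathsf{d}_\mathbb{R}$ inside the integral to get $\int(\delta_\mathbb{R}\mathsf{d}_\mathbb{R}) = \int\delta'_\mathbb{R}$, and finally the convenient-space version of the Second Fundamental Theorem of Calculus (Lemma \ref{intFT2}) gives $\int\delta'_\mathbb{R} = \delta_\mathbb{R} - \delta_\mathbb{R}(0)$, which is exactly $\delta_\mathbb{R} - \delta_\mathbb{R}\oc(0)$.

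The main obstacle is not any one calculation but ensuring that all the passages between the categorical maps and the analytic curve statements are legitimate: I must confirm that $\delta_\mathbb{R}$ really is smooth, that $\delta'_\mathbb{R}$ coincides with the curve representing $\mathsf{d}_\mathbb{R}$ (i.e.\ that the categorical deriving transformation on the unit genuinely computes pointwise curve differentiation), and that the uniqueness clauses in the correspondence $\mathsf{CON}(\oc\mathbb{R},\mathbb{R})\cong\mathcal{C}^\infty(\mathbb{R})$ are strong enough to let me deduce equality of bounded linear maps from equality after precomposing with $\delta_\mathbb{R}$. Once those identifications are pinned down, the whole verification is a short chain of equalities, and the conclusion $\mathsf{CON}$ has antiderivatives follows immediately from Theorem \ref{antithm}.
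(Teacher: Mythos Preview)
Your proposal is correct and follows essentially the same approach as the paper: define $\mathsf{s}_\mathbb{R}$ as the linearization of the antiderivative curve $\int\delta_\mathbb{R}$, reduce \textbf{[ftc.2]} to an equality of smooth curves via the bijection $\mathsf{CON}(\oc\mathbb{R},\oc\mathbb{R})\cong\mathcal{C}^\infty(\mathbb{R},\oc\mathbb{R})$, and verify it using Lemmas \ref{kock1} and \ref{intFT2} before invoking Theorem \ref{antithm}. The only slip is notational---you wrote $\mathsf{CON}(\oc\mathbb{R},\mathbb{R})\cong\mathcal{C}^\infty(\mathbb{R})$ where the codomain should be $\oc\mathbb{R}$ (or an arbitrary $F$), since $\mathsf{s}_\mathbb{R}$ has codomain $\oc\mathbb{R}$---but you clearly intend and use the correct correspondence.
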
 

The antiderivative integral transformation $\mathsf{s}_E: \oc E \to \oc E \otimes E$ is the unique bounded linear map which when precomposing by $\delta_E: E \to \oc E$ gives the following equality:  
\[\mathsf{s}_E(\delta_E(x)) = \left( \int \limits^1_0 \delta_E(t \cdot x) ~ \mathsf{d}t \right) \otimes x \]
where $\int \limits^1_0 \delta_E(t \cdot x) ~ \mathsf{d}t $ is Riemann integral of the smooth curve $\delta_E(- \cdot x) := \mathbb{R} \to E$ (similar to the integral consider in \cite[Proposition 3.1]{kock1985calculus}). Recall that every bounded linear map $f: \oc E \otimes E \to F$ can be seen as a smooth map $\overline{f}: E \times E \to F$ which is linear in its second argument. Therefore, its integral $\mathsf{S}[f]: \oc E \to F$ is the unique bounded linear map which when precomposing by $\delta_E$ gives: 
\[\mathsf{S}[f](\delta_E(x)) = \int\limits^1_0 \overline{f}(tx, x) ~\mathsf{d}t\]
Note that this integral is the same as the one discussed for smooth functions in \cite{COCKETT201845,cruttwell2019integral}. In particular, this integral satisfies the Rota-Baxter rule and the Second Fundamental Theorem of Calculus. 

\bibliographystyle{spmpsci}      % mathematics and physical sciences
\bibliography{convenientbib}   % name your BibTeX data base

\end{document}